\def\ppen{\penalty300 }
\let\col=\colon
\def\colon{\col\ppen}
\theoremstyle{plain} 
\newtheorem{thm}{Theorem}[section]
\newtheorem{prop}[thm]{Proposition}
\newtheorem{lem}[thm]{Lemma}
\newtheorem{cor}[thm]{Corollary}
\newtheorem{cand}[thm]{Candidates}
\theoremstyle{definition}
\newtheorem{defn}[thm]{Definition}
\newtheorem{rem}[thm]{Remark}
\newtheorem{ex}[thm]{Example}
\newtheorem{prob}[thm]{Problem}
\renewcommand{\theta}{\vartheta}
\renewcommand{\phi}{\varphi}
\renewcommand{\epsilon}{\varepsilon}
\renewcommand{\subset}{\subseteq}
\renewcommand{\supset}{\supseteq}
\newcommand{\N}{\mathbb N}
\newcommand{\Z}{\mathbb Z}
\newcommand{\C}{\mathbb C}
\newcommand{\T}{\mathbb T}
\DeclareMathOperator{\Mor}{Mor}
\DeclareMathOperator{\spanlin}{span}
\DeclareMathOperator{\Lrot}{Lrot}
\DeclareMathOperator{\Rrot}{Rrot}
\newcommand{\Cat}{\mathscr{C}}
\newcommand{\Kat}{\mathscr{K}}
\newcommand{\id}{\mathrm{id}}
\newcommand{\GL}{\mathrm{GL}}
\newcommand{\Part}{\mathscr{P}}
\newcommand{\Partlin}{\mathbf{Part}}
\newcommand{\PartRed}{\mathbf{PartRed}}
\newcommand{\Pair}{\mathbf{Pair}}
\newcommand{\NCPair}{\mathbf{NCPair}}
\newcommand{\NCPart}{\mathbf{NCPart}}
\newcommand{\EvenPart}{\mathbf{EvenPart}}
\newcommand{\Mat}{\mathbf{Mat}}
\newcommand{\dred}{_{\delta\mathchar `\-{\rm red}}}
\newcommand{\Tc}{\mathcal{T}}
\newcommand{\Dc}{\mathcal{D}}
\newcommand{\Jc}{\mathcal{J}}
\newcommand{\Bc}{\mathcal{B}}
\newcommand{\Pc}{\mathcal{P}}
\newcommand{\Vc}{\mathcal{V}}
\newcommand{\la}{\mathsf{a}}
\newcommand{\lb}{\mathsf{b}}
\newcommand{\lc}{\mathsf{c}}
\DeclareMathOperator{\tens}{tens}
\DeclareMathOperator{\contr}{contr}
\DeclareMathOperator{\rot}{rot}
\DeclareMathOperator{\Pol}{Pol}
\newcommand{\ethine}{
\Partition{
\Pblock 1to0.6:4,5
\Pblock 1to0.2:3,6
\Pblock 1to-0.2:2,7
\Pline (1,1) (1,-0.2)
\Pline (8,1) (8,-0.2)
}}
\newcommand{\cyclopropadiene}{
\Partition{
\Pblock 1to0.6:4,5
\Pblock 1to0.2:3,6
\Pblock 1to0.6:8,9
\Pblock 1to0.2:7,10
\Pblock 1to-0.2:2,11
\Pline (1,1) (1,-0.2)
\Pline (12,1) (12,-0.2)
}}
\newcommand{\cyclobutatriene}{
\Partition{
\Pblock 1to0.6:4,5
\Pblock 1to0.2:3,6
\Pblock 1to0.6:8,9
\Pblock 1to0.2:7,10
\Pblock 1to0.6:12,13
\Pblock 1to0.2:11,14
\Pblock 1to-0.2:2,15
\Pline (1,1) (1,-0.2)
\Pline (16,1) (16,-0.2)
}}
\newcommand{\cyclopentaquartene}{
\Partition{
\Pblock 1to0.6:4,5
\Pblock 1to0.2:3,6
\Pblock 1to0.6:8,9
\Pblock 1to0.2:7,10
\Pblock 1to0.6:12,13
\Pblock 1to0.2:11,14
\Pblock 1to0.6:16,17
\Pblock 1to0.2:15,18
\Pblock 1to-0.2:2,19
\Pline (1,1) (1,-0.2)
\Pline (20,1) (20,-0.2)
}}
\newcommand{\Pabaaba}{%
\Partition{
\Pblock 1to0.5:1,3
\Pblock 0to0.5:1,3
\Pline (2,1)(2,0)
}}
\newcommand{\Pabbabb}{\Partition{
\Pblock 1to0.75:1,2
\Pblock 0to0.25:2,3
\Pline (2.5,0.25)(1.5,0.75)
\Pline (1,0)(3,1)
}}
\newcommand{\Paaaaaa}{
\Partition{
\Pblock 1to0.7:1,2,3
\Pblock 0to0.3:1,2,3
\Pline (2.5,0.7)(2.5,0.3)
}}
\def\PDblock #1to#2:#3 {\PDblockA{#1}{#2}#3,,}
\def\PDblockA#1#2#3,#4,{\ifx,#4,\else
\pgfpathmoveto{\pgfpointxy{#3}{#1}}
\pgfpathlineto{\pgfpointxy{#3}{#2}}
\pgfpathlineto{\pgfpointxy{#4}{#2}}
\pgfpathlineto{\pgfpointxy{#4}{#1}}
\pgfsetdash{{\pgflinewidth}{1pt}}{0pt}
\pgfusepath{stroke}
\pgfsetdash{}{0pt}
\fihere\PblockA{#1}{#2}#4,\fi}
\newcommand{\LDabac}{\Partition{
\Psingletons 0to0.4:2,4
\PDblock 0to0.8:1,3
}}
\newcommand{\LDabcbad}{\Partition{
\Psingletons 0to0.4:3,6
\PDblock 0to1:1,5
\PDblock 0to0.7:2,4
}}
\newcommand{\Labcbad}{\LPartition{0.4:3,6}{0.7:2,4;1:1,5}}
\newcommand{\Labcbde}{\LPartition{0.4:1,3,5,6}{0.7:2,4}}
\newcommand{\Labcdae}{\LPartition{0.4:2,3,4,6}{1:1,5}}
\newcommand{\Labcdef}{\LPartition{0.4:1,2,3,4,5,6}{}}
\newcommand{\transpart}{%
\Partition{
\Pline(1,1)(1,0)
\Pline(3,1)(3,0)
\Pline(4,1)(5,0)
\Pline(5,1)(4,0)
\Pline(6,1)(6,0)
\Pline(8,1)(8,0)
\Ptext(2,0.5){$\ldots$}
\Ptext(7,0.5){$\ldots$}
}}
\newcommand{\eventranspart}{%
\Partition{
\Pline(1,1)(1,0)
\Pline(3,1)(3,0)
\Pline(4,1)(6,0)
\Pline(5,1)(5,0)
\Pline(6,1)(4,0)
\Pline(7,1)(7,0)
\Pline(9,1)(9,0)
\Ptext(2,0.5){$\ldots$}
\Ptext(8,0.5){$\ldots$}
}}
\begin{document}
\title{Generating linear categories of partitions}
\author{Daniel Gromada}
\author{Moritz Weber}
\address{Saarland University, Fachbereich Mathematik, Postfach 151150,
66041 Saarbr\"ucken, Germany}
\email{gromada@math.uni-sb.de}
\email{weber@math.uni-sb.de}
\date{\today}
\subjclass[2010]{18D10 (Primary); 20G42, 68W30 (Secondary)}
\keywords{category of partitions, tensor category, non-easy quantum group}
\thanks{Both authors were supported by the collaborative research centre SFB-TRR 195 ``Symbolic Tools in Mathematics and their Application''. The second author was also supported by the ERC Advanced Grant NCDFP, held by Roland Speicher and by the DFG project ``Quantenautomorphismen von Graphen''. The article is a part of the first author's PhD thesis.}
\thanks{We thank Adam Skalski for helpful discussions regarding the anticommutative twists.}
\thanks{We thank the referee for helpful comments, which substantially improved the present article.}

\begin{abstract}
We present an algorithm for approximating linear categories of partitions (of sets). We report on concrete computer experiments based on this algorithm which we used to obtain first examples of so-called non-easy linear categories of partitions. All of the examples that we constructed are proven to be indeed new and non-easy. We interpret some of the new categories in terms of quantum group anticommutative twists.
\end{abstract}

\maketitle
\section*{Introduction}
By a \emph{partition} we mean a partition of a set, that is, a decomposition of a given finite set into disjoint non-empty subsets (see e.g. \cite{Sta11}). On the set of all partitions one can define a linear structure and operations of composition, tensor product, and involution giving it the structure of a monoidal $*$-category. By a \emph{partition category} we mean any subcategory of this one.

Partition categories, also known as \emph{(linear) categories of partitions}, have been heavily studied by researchers from different fields of mathematics and physics such as group theory \cite{Bra37,Wen88,HR05}, compact quantum groups \cite{BS09,Web13,RW16,TW18}, operator algebras \cite{Web17Cstar}, tensor categories \cite{Del07,CO11,CH17} or statistical physics \cite{TL71,Kau87,Mar94}.

Our motivation for studying those structures comes from the theory of (compact quantum) groups \cite{Wor87,NT13}, where those categories model the representation theory of a given quantum group. Our goal is to construct new kinds of examples of partition categories since those induce examples of compact matrix quantum groups (see the so-called ``easy'' \cite{BS09,Web17} and ``non-easy'' \cite{Maa18,GW18} quantum groups). We are particularly interested in concrete examples of ``non-easy'' categories and associated quantum groups~-- a class on which basically nothing was known until recently. A linear category of partitions is called {\em non-easy} whenever working with non-trivial linear combinations of partitions is essential to describe it.

So, the main goal of our project lies in finding and analysing the first examples of non-easy linear categories of partitions as no examples were known before we started our work. The contents and main results of this paper can be divided into three parts. In the first part, we describe some computer experiments that lead to discovering new examples of non-easy categories. We implemented a simple algorithm that takes as an input a set of generators in the form of a linear combination of partitions and approximates the partition category it generates. We describe the idea in Section~\ref{sec.algorithm} and provide the concrete computations in Section~\ref{sec.results}.

Secondly, we study the categories by theoretical means and prove that they are indeed new and non-easy. This is done in Section~\ref{sec.noneasydirect}. The last part of this paper -- Section~\ref{sec.twists} -- is devoted to interpreting the new categories within the theory of compact matrix quantum groups. Most of the categories discovered here were actually studied from the quantum group perspective in a separate paper \cite{GW18}. In this article, we study some remaining cases, which can be interpreted in terms of some non-commutative twists of the orthogonal group.

\begin{table}[b]
\tabskip=0pt
\def\bstrut{\vrule height0pt depth4.5pt width0pt}
\def\tstrut{\vrule height10.5pt depth0pt width0pt}
\newdimen\tboxw
\def\textbox#1{\vcenter{\parindent=0pt\leftskip=0pt plus 1fill\rightskip=\leftskip\emergencystretch=2em\hsize=\tboxw\tstrut#1\bstrut}}
\noindent\halign to \hsize{
\tabskip=0pt plus1fill
\hfil\vrule height12pt depth3pt width0pt #\hfil &
$\vcenter{\hsize=0.4\hsize\parindent=0pt\def\crr{\hfill$\par$\hfill\scriptstyle}\tstrut$\hfil\scriptstyle #\hfil$\bstrut}$ &
\hfil\vrule height12pt depth3pt width0pt #\hfil &
\tabskip=0pt\hfil$#$\hfil\cr
\noalign{\hrule}
\omit\tboxw=5em$\textbox{Paragraph w/~candid.}$&
\omit\tboxw=0.35\hsize$\textbox{Generator as a~full linear combination}$&
\omit\tboxw=7em$\textbox{Section where it is studied}$&
\omit\tboxw=6em$\textbox{Systematical description}$\cr
\noalign{\hrule}
\ref{Cand1} &\delta^2\,\Laaa-\delta(\Labb+\Laab+\Laba)+2\Labc & \ref{secc.gencont}, \ref{secc.P} & \Pc_{(\delta)}\Laaa\cr
\ref{Cand1} &\left(-2(1+\delta)\mp(2+\delta)\sqrt{\delta+1}\right)\,\Laaa-\crr(1\pm\sqrt{\delta+1})(\Labb+\Laab+\Laba)+\Labc & \ref{secc.V} & \Vc_{(\delta,\pm)}\Laaa\cr
\ref{Cand2} &\delta^3\Laaaa-2\delta^2(\Laaab+\Laaba+\dots)+\crr4\delta(\Laabc+\Labac+\dots)-16\Labcd & \ref{secc.tau} & \Tc_{(\delta)}\Laaaa\cr
\ref{Cand2} &\delta^3(\delta+1)\Laaaa-\crr\delta^2(\delta+1\pm\sqrt{\delta+1})(\Laaab+\Laaba+\dots)+\crr\delta(\delta+2\pm2\sqrt{\delta+1})(\Laabc+\Labac+\dots)+\crr(\delta^2-4\delta-8\mp8\sqrt{\delta+1})\Labcd & \ref{secc.V} & \Vc_{(\delta,\pm)}\Laaaa\cr
\ref{Cand3} &\delta^2\Labab-2\delta(\Labac+\Labcb)+4\Labcd & \ref{secc.disjoin} & \Dc\Labab\cr
\ref{Cand3} &\Labab-2\Laaaa & \ref{secc.join} & \Jc\Labab\cr
\ref{Cand4} &\Laaaa-{1\over\delta}(\Laaab+\Labbb+\Labaa+\Laaba)+\crr{1\over\delta^2}(\Labac+\Labcb) & \ref{secc.P} & \Pc_{(\delta)}\Laaaa\cr\noalign{\hrule}
}\medskip
\caption{Summary of all generators of non-easy linear categories of partitions studied in this article}
\label{t.noneasygens}
\end{table}

To summarize the results of Section~\ref{sec.results} and give an overview of Section~\ref{sec.noneasydirect}, we list in Table~\ref{t.noneasygens} all the linear combinations of partitions appearing in this article that generate non-easy categories. In the first column, we give a reference to the corresponding paragraph in Section~\ref{sec.results}, where the linear combination was discovered. In the second column, we explicitly write down the linear combination of partitions. In the third column, we refer to the corresponding subsection of Sect.~\ref{sec.noneasydirect}, where the linear combination is studied. We interpret those linear combinations usually as images of some mappings and we give this interpretation in the last column. Note that the expressions in the second and the last column may not be exactly equal; however, they generate the same category.

Note also that Table~\ref{t.noneasygens} does not yet provide an exhaustive summary of all non-easy categories we found. We list here only the generators. Those generators can be further combined with other partitions to define additional non-easy categories.

\section{Preliminaries}


\subsection{Partitions}

Let $k,l\in\N_0$, by a \emph{partition} of $k$ upper and $l$ lower points we mean a partition of the set $\{1,\dots,k\}\sqcup\{1,\dots,l\}\approx\{1,\dots,k+l\}$, that is, a decomposition of the set of $k+l$ points into non-empty disjoint subsets, called \emph{blocks}. The first $k$ points are called \emph{upper} and the last $l$ points are called \emph{lower}. The set of all partitions on $k$ upper and $l$ lower points is denoted $\Part(k,l)$. We denote their union by $\Part:=\bigcup_{k,l\in\N_0}\Part(k,l)$. The number $\left| p\right|:=k+l$ for $p\in\Part(k,l)$ is called the \emph{length} of $p$.

We illustrate partitions graphically by putting $k$ points in one row and $l$ points on another row below and connecting by lines those points that are grouped in one block. All lines are drawn between those two rows.

Below, we give an example of two partitions $p\in \Part(3,4)$ and $q\in\Part(4,4)$ defined by their graphical representation. The first set of points is decomposed into three blocks, whereas the second one is into five blocks. In addition, the first one is an example of a \emph{non-crossing} partition, i.e.\ a partition that can be drawn in a way that lines connecting different blocks do not intersect (following the rule that all lines are between the two rows of points). On the other hand, the second partition has one crossing.
\begin{equation}
\label{eq.pq}
p=
\BigPartition{
\Pblock 0 to 0.25:2,3
\Pblock 1 to 0.75:1,2,3
\Psingletons 0 to 0.25:1,4
\Pline (2.5,0.25) (2.5,0.75)
}
\qquad
q=
\BigPartition{
\Psingletons 0 to 0.25:1,4
\Psingletons 1 to 0.75:1,4
\Pline (2,0) (3,1)
\Pline (3,0) (2,1)
\Pline (2.75,0.25) (4,0.25)
}
\end{equation}

A block containing a single point is called a \emph{singleton}. In particular, the partitions containing only one point are called singletons and for clarity denoted by an arrow $\singleton\in\Part(0,1)$ and $\upsingleton\in\Part(1,0)$. For more information about partitions, see \cite{Sta11,NS06,Web17}.

\subsection{Operations on partitions}
\label{secc.op}

Let us fix a complex number $\delta\in\C$. Let us denote $\Partlin_\delta(k,l)$ the vector space of formal linear combination of partitions $p\in\Part(k,l)$. That is, $\Partlin_\delta(k,l)$ is a vector space, whose basis is $\Part(k,l)$.

Now, we are going to define some operations on $\Partlin_\delta$. First, let us define those operations just on partitions.
\begin{itemize}
\item  The \emph{tensor product} of two partitions $p\in\Part(k,l)$ and $q\in\Part(k',l')$ is the partition $p\otimes q\in \Part(k+k',l+l')$ obtained by writing the graphical representations of $p$ and $q$ ``side by side''.
$$
\BigPartition{
\Pblock 0 to 0.25:2,3
\Pblock 1 to 0.75:1,2,3
\Psingletons 0 to 0.25:1,4
\Pline (2.5,0.25) (2.5,0.75)
}
\otimes
\BigPartition{
\Psingletons 0 to 0.25:1,4
\Psingletons 1 to 0.75:1,4
\Pline (2,0) (3,1)
\Pline (3,0) (2,1)
\Pline (2.75,0.25) (4,0.25)
}
=
\BigPartition{
\Pblock 0 to 0.25:2,3
\Pblock 1 to 0.75:1,2,3
\Psingletons 0 to 0.25:1,4,5,8
\Psingletons 1 to 0.75:5,8
\Pline (2.5,0.25) (2.5,0.75)
\Pline (6,0) (7,1)
\Pline (7,0) (6,1)
\Pline (6.75,0.25) (8,0.25)
}
$$

\item For $p\in\Part(k,l)$, $q\in\Part(l,m)$ we define their \emph{composition} $qp\in\Partlin_\delta(k,m)$ by putting the graphical representation of $q$ below $p$ identifying the lower row of $p$ with the upper row of $q$. The upper row of $p$ now represents the upper row of the composition and the lower row of $q$ represents the lower row of the composition. Each extra loop that appears in the middle and is not connected to any of the upper or the lower points, transforms to a multiplicative factor $\delta$.
$$
\BigPartition{
\Psingletons 0 to 0.25:1,4
\Psingletons 1 to 0.75:1,4
\Pline (2,0) (3,1)
\Pline (3,0) (2,1)
\Pline (2.75,0.25) (4,0.25)
}
\cdot
\BigPartition{
\Pblock 0 to 0.25:2,3
\Pblock 1 to 0.75:1,2,3
\Psingletons 0 to 0.25:1,4
\Pline (2.5,0.25) (2.5,0.75)
}
=
\BigPartition{
\Pblock 0.5 to 0.75:2,3
\Pblock 1.5 to 1.25:1,2,3
\Psingletons  0.5 to  0.75:1,4
\Pline (2.5,0.75) (2.5,1.25)
\Psingletons -0.5 to -0.25:1,4
\Psingletons  0.5 to  0.25:1,4
\Pline (2,-0.5) (3,0.5)
\Pline (3,-0.5) (2,0.5)
\Pline (2.75,-0.25) (4,-0.25)
}
= \delta^2
\BigPartition{
\Pblock 0 to 0.25:2,3,4
\Pblock 1 to 0.75:1,2,3
\Psingletons 0 to 0.25:1
\Pline (2.5,0.25) (2.5,0.75)
}
$$

\item For $p\in\Part(k,l)$ we define its \emph{involution} $p^*\in\Part(l,k)$ by reversing its graphical representation with respect to the horizontal axis.
$$
\left(
\BigPartition{
\Pblock 0 to 0.25:2,3
\Pblock 1 to 0.75:1,2,3
\Psingletons 0 to 0.25:1,4
\Pline (2.5,0.25) (2.5,0.75)
}
\right)^*
=
\BigPartition{
\Pblock 1 to 0.75:2,3
\Pblock 0 to 0.25:1,2,3
\Psingletons 1 to 0.75:1,4
\Pline (2.5,0.25) (2.5,0.75)
}
$$
\end{itemize}

Now we can extend the definition of tensor product and composition on the vector spaces $\Partlin_\delta(k,l)$ linearly. We extend the definition of the involution antilinearly. These operations are called the \emph{category operations} on partitions. See \cite{TW18} for more examples of the category operations.

\subsection{Linear categories of partitions}
\label{secc.cat}
The set of all natural numbers with zero $\N_0$ as a set of objects together with the spaces of linear combinations of partitions $\Partlin_\delta(k,l)$ as sets of morphisms between $k\in\N_0$ and $l\in\N_0$ with respect to those operations form a monoidal $*$-category. All objects in the category are self-dual.

We are interested in subcategories of the category of all partitions. That is, any collection $\Kat$ of linear subspaces $\Kat(k,l)\subset\Partlin_\delta(k,l)$ containing the \emph{identity partition} $\idpart\in\Kat(1,1)$ and the \emph{pair partition} $\pairpart\in\Kat(0,2)$, which is closed under the category operations is called a \emph{linear category of partitions}.

For given $p_1,\dots,p_n\in\Partlin_\delta$, we denote by $\langle p_1,\dots,p_n\rangle_\delta$ the smallest linear category of partitions containing $p_1,\dots,p_n$. We say that $p_1,\dots,p_n$ \emph{generate} $\langle p_1,\dots,p_n\rangle_\delta$. Note that the identity partition and the pair partition are contained in the category by definition and hence will not be explicitly listed as generators. Any element in $\langle p_1,\dots,p_n\rangle_\delta$ can be obtained from the generators $p_1,\dots,p_n$, the identity partition $\idpart$, and the pair partition $\pairpart$ by performing a finite amount of category operations and linear combinations. 

Instead of having different categories for different parameters $\delta$, we can consider ``all of them at once''. That is, define a category $\Partlin$, where the morphism spaces $\Partlin(k,l)$ are modules over the polynomial ring $R:=\C[\delta]$.

\subsection{Partitions with points on one line}
In order to describe elements of a given linear category of partitions $\Kat$, we actually do not have to describe all the spaces $\Kat(k,l)$ for all $k,l\in\N_0$. For $p\in\Part(k,l)$, $k>0$, its \emph{left rotation} is a partition $\Lrot p\in\Part(k-1,l+1)$ obtained by moving the leftmost point of the upper row on the beginning of the lower row. Similarly, for $p\in\Part(k,l)$, $l>0$, we can define its \emph{right rotation} $\Rrot p\in\Part(k+1,l-1)$ by moving the last point of the lower row to the end of the upper row. Both operations are obviously invertible. We extend this operation linearly on $\Partlin_\delta$.

\begin{prop}[{\cite[Lemma 2.7]{BS09}}]
Every category $\Kat$ is closed under left and right rotations and their inverses.
\end{prop}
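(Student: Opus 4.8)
The plan is to reduce each of the four boundary operations—$\Lrot$, $\Rrot$, and their inverses—to a single composition built from $p$, the identity partition $\idpart$, and the pair partition $\pairpart$ (together with its involution), and then to invoke closure of $\Kat$ under the category operations. The whole point is that a rotation is nothing but bending one leg of $p$ around a cup or a cap.

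First I would record that the ``cap'' $\pairpart^*\in\Part(2,0)$ lies in $\Kat(2,0)$: indeed $\pairpart\in\Kat(0,2)$ by the very definition of a linear category of partitions, and $\Kat$ is closed under involution. The key identity is that for $p\in\Part(k,l)$ with $k>0$,
\[
\Lrot p=(\idpart\otimes p)\,(\pairpart\otimes\idpart^{\otimes(k-1)}).
\]
I would first check the bookkeeping: $\pairpart\otimes\idpart^{\otimes(k-1)}\in\Part(k-1,k+1)$ and $\idpart\otimes p\in\Part(k+1,l+1)$, so the composite lands in $\Part(k-1,l+1)$, exactly where a left rotation should. Then I would verify geometrically that this composite performs the rotation: the cup $\pairpart$ produces a new leftmost lower point and bends a strand over onto the first upper point of $p$, while the $k-1$ remaining upper points of $p$ pass straight through the identities and its $l$ lower points are shifted one place to the right. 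This is precisely moving the leftmost upper point of $p$ to the front of the lower row. Since $\idpart,\pairpart\in\Kat$ and $\Kat$ is closed under $\otimes$ and composition, it follows that $\Lrot p\in\Kat(k-1,l+1)$.

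The remaining three operations I would treat by the symmetric formulas
\begin{gather*}
\Rrot^{-1}p=(p\otimes\idpart)\,(\idpart^{\otimes(k-1)}\otimes\pairpart),\\
\Rrot p=(\idpart^{\otimes(l-1)}\otimes\pairpart^*)\,(p\otimes\idpart),\\
\Lrot^{-1}p=(\pairpart^*\otimes\idpart^{\otimes(l-1)})\,(\idpart\otimes p),
\end{gather*}
each obtained from the one above by attaching the cup or the cap at the appropriate end—left or right—of the identity strands, and by using the cap $\pairpart^*$ in place of the cup $\pairpart$ whenever a lower strand is to be bent upward rather than an upper strand downward. Every right-hand side is again a composition of tensor products of $p$, $\idpart$, $\pairpart$ and $\pairpart^*$, all of which belong to $\Kat$, so each of the four operations maps $\Kat$ into $\Kat$. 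Finally, since $\Lrot$, $\Rrot$ and their inverses are defined on $\Partlin_\delta$ by linear extension and the category operations are bilinear, these four identities are linear in $p$ and hence extend from single partitions to arbitrary linear combinations, which completes the argument.

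I expect the only genuine work to be the diagram chase confirming that each composite implements the claimed rotation, and in particular keeping the conventions straight: which factor sits on top in the composition $qp$ (in the paper's convention the right-hand factor is drawn on top, so here the cup/cap layer is uppermost) and whether the cup or the cap is tensored on the left or on the right. Once a single case—say $\Lrot$—is pinned down carefully, the other three are mirror images of it and require no new idea.
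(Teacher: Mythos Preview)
Your argument is correct and is the standard ``cup--cap'' proof: each rotation is realised by tensoring with $\idpart$ and composing with $\pairpart$ or $\pairpart^*$, exactly as you write. The paper itself does not give a proof of this proposition; it merely cites \cite[Lemma~2.7]{BS09}, and the proof there is precisely the construction you have reproduced. Your bookkeeping of domains and codomains is right, and your reading of the composition convention (right factor on top) matches the paper's Section~\ref{secc.op}.
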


This means that every category $\Kat$ is described by the spaces $\Kat(0,l)$ with lower points only since the spaces $\Kat(k,l)$ can be obtained by rotation of $\Kat(0,k+l)$.

\subsection{Representing partitions by words}
For partitions on one line, that is, with lower points only, we can define an alternative way of representing them. Instead of pictures, we can use words. Given a partition $p\in\Part(0,l)$, we can mark its blocks by letters and represent $p$ as a word $a_1\cdots a_l$, where $a_i$ is a letter corresponding to the block of the $i$-th point. For example, rotating the partitions $p$ and $q$ from Equation \eqref{eq.pq}, we obtain
$$
p'=\LPartition{0.4:4,7}{0.8:1,2,3,5,6},\qquad
q'=\LPartition{0.4:1,4,5}{0.8:3,7,8;1.2:2,6}.
$$
Those can be represented by words as
$$p'=\mathsf{aaabaac},\qquad q'=\mathsf{abcdebcc}.$$
Note that the word representation is not unique. Choosing different set of letters, we can also write for example $p'=\mathsf{dddaddf}$ or $q'=\mathsf{defgheff}$.

Now, let us define some operations on linear combinations of partitions with lower points only and express them in terms of words.
\begin{itemize}
\item The \emph{tensor product} of two partitions with lower points only is again a partition with lower points only. Let $p_1$ be represented by a word $w_1$ and $p_2$ by a word $w_2$ such that $w_1$ and $w_2$ contain disjoint sets of letters. Then $p_1\otimes p_2$ is represented by the word $w_1w_2$. For the example above,
$$\mathsf{aaabaac}\otimes\mathsf{abcdebcc}=\mathsf{aaabaac}\otimes\mathsf{defgheff}=\mathsf{aaabaacdefgheff}.$$
\item For $p\in\Partlin_\delta(0,l)$, $l\ge 2$, we define its \emph{contraction} as $\Pi p:=qp$, where $q=\uppairpart\otimes\idpart^{\otimes l-2}$. On the basis of partitions it can be described using word representation by identifying the first two letters and then removing them. If the first two letters are the only occurrence of those letters in the word, we multiply by a factor $\delta$.
$$\Pi(\mathsf{abcadbc})=\mathsf{cadac},\qquad\Pi(\mathsf{aabcdc})=\delta\,\mathsf{bcdc}.$$
\item For $p\in\Partlin_\delta(0,l)$ we define its \emph{rotation} $Rp:=(\Lrot\circ\Rrot)(p)$. For a partition in word representation this operation takes the last letter and puts it in front of the word.
$$R(\mathsf{abcdebcc})=\mathsf{cabcdebc}.$$
\item For $p\in\Partlin_\delta(0,l)$ we define its \emph{reflection} $p^\star:=\Lrot^l(p^*)$. For a partition in word representation this operation reverses the order of the letters.
$$(\mathsf{abcdebcc})^\star=\mathsf{ccbedcba}.$$
\end{itemize}

The above defined operations on partitions on one line will be called the \emph{word operations}. They were defined using the category operations of tensor product, composition, and involution. In the following proposition we prove that conversely, the category operations can be expressed in terms of the word operations. One could say that category operations and word operations are in this sense equivalent.

\begin{prop}
For any linear category of partitions $\Kat$, the collection of spaces $\Kat(0,l)$, $l\in\N_0$ is closed under the word operations. Conversely, for any collection of linear subspaces $\Kat(0,l)\subset\Partlin_\delta(0,l)$ closed under the word operations, the collection
$$\Kat(k,l):=\{\Rrot^kp\mid p\in\Kat(0,k+l)\}=\{\Lrot^{-k}p\mid p\in\Kat(0,k+l)\}$$
is closed under the category operations, so it is a linear category of partitions.
\end{prop}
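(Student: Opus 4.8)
The plan is to prove the two implications separately, the forward one being a bookkeeping consequence of the definitions and the converse carrying all of the content. For the \emph{forward} direction I would simply observe that each of the four word operations was introduced as a composite of category operations together with rotations: the tensor product of one-line partitions is the restriction of the categorical tensor product; the contraction is $\Pi p=(\uppairpart\otimes\idpart^{\otimes l-2})\,p$, a composition; the rotation is $R=\Lrot\circ\Rrot$; and the reflection is $p^\star=\Lrot^{l}(p^*)$, built from the involution and rotations. Since $\Kat$ is closed under the category operations by assumption and under $\Lrot,\Rrot$ by the quoted Proposition, and since each word operation visibly maps one-line partitions to one-line partitions, the spaces $\Kat(0,l)$ are closed under all word operations.

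For the \emph{converse} I would first settle the well-definedness. The key one-line identity is $\Lrot^{k}\Rrot^{k}=R^{k}$ on $\Partlin_\delta(0,m)$, which one verifies by inspecting how the two rotations permute the points. It yields the clean characterisation that a two-row partition $x\in\Part(k,l)$ satisfies $x=\Rrot^{k}p_0$ for some $p_0\in\Kat(0,k+l)$ if and only if $\Lrot^{k}x\in\Kat(0,k+l)$, using closure of $\Kat(0,k+l)$ under $R$; the same characterisation holds for the $\Lrot^{-k}$-description, so the two descriptions of $\Kat(k,l)$ coincide. Closure under $\Lrot$ is then immediate from the characterisation, and closure under $\Rrot$ follows since $\Lrot^{k+1}\Rrot^{k+1}p_0=R^{k+1}p_0\in\Kat(0,\cdot)$. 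The units are handled by $\idpart=\Rrot(\pairpart)$ together with the (implicitly assumed) presence of the pair partition $\mathsf{aa}\in\Kat(0,2)$.

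It then remains to check closure under the three category operations, and the strategy throughout is the same: rotate to one line, apply the corresponding word operation, rotate back. For the \emph{involution} I would use $(\Lrot p)^*=\Lrot^{-1}(p^*)$ and $(\Rrot p)^*=\Rrot^{-1}(p^*)$; applying these to $p=\Rrot^{k}p_0$ and comparing with $p_0^\star=\Lrot^{k+l}(p_0^*)$ shows, after a short rotation computation, that $p^*=\Rrot^{l}\bigl(R^{j}(p_0^\star)\bigr)$ for a suitable $j$, whence $p^*\in\Kat(l,k)$ by closure of $\Kat(0,k+l)$ under $R$ and $\star$. For the \emph{tensor product} of $p=\Rrot^{k}p_0$ and $q=\Rrot^{k'}q_0$ I would establish, by writing out the words, the identity $p\otimes q=\Rrot^{k+k'}\bigl(R^{k'}(p_0\otimes R^{l'}q_0)\bigr)$: pre-rotating $q_0$ and then concatenating and rotating reproduces exactly the one-line form of $p\otimes q$, so the right-hand side lies in $\Kat(0,\cdot)$ by closure under word tensor and $R$. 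Finally, for the \emph{composition} $qp$ with $p\in\Part(k,l)$, $q\in\Part(l,m)$ I would realise the gluing of the $l$ middle legs as $l$ successive contractions: tensor suitable rotations of $p_0$ and $q_0$ so that a matched pair of legs sits at the front, apply $\Pi$, rotate the next pair to the front, and repeat; after $l$ contractions and a final rotation one obtains $qp$. As $\Pi$ and the tensor product are word operations and $R$ preserves $\Kat(0,\cdot)$, this gives $qp\in\Kat(k,m)$.

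I expect the only genuine work to lie in the converse, and within it in the rotation bookkeeping: tracking how the interleaving of upper and lower points under the two-row tensor product and under composition translates, via $\Lrot,\Rrot,R$, into concatenations and contractions of words. Conceptually the crucial point is the decomposition of composition into tensor products and caps (contractions), reflecting the self-duality of the objects; once this picture is fixed, the remaining verifications are routine though index-heavy, and I would relegate the explicit permutation computations to the word-level identities stated above.
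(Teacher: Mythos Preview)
Your proposal is correct and follows essentially the same approach as the paper: the forward direction is immediate from the definitions, and for the converse you express each category operation via word operations on the one-line representatives, realising composition as a tensor product followed by iterated contractions. The paper's proof is much terser, recording only the three identities $\Lrot^{-k}p\otimes\Rrot^{k'}q=\Lrot^{-k}\Rrot^{k'}(p\otimes q)$, $(\Rrot^k p)^*=\Rrot^l p^\star$, and $(\Rrot^l q)(\Rrot^k p)=\Rrot^k\Pi_{m+1}\cdots\Pi_{m+l}(q\otimes p)$ with $\Pi_i:=R^i\circ\Pi\circ R^{-i}$; these are slightly cleaner than your versions (no extra $R^j$ corrections needed), but the content is the same.
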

\begin{proof}
The word operations are defined using the category operations. From this, the first part of the proposition follows.

The second part is proven by expressing the category operations using the word operations
$$\Lrot^{-k}p\otimes\Rrot^{k'}q=\Lrot^{-k}\Rrot^{k'}(p\otimes q),$$
$$(\Rrot^k p)^*=\Rrot^l p^\star,$$
$$(\Rrot^l q)(\Rrot^k p)=\Rrot^k\Pi_{m+1}\Pi_{m+2}\cdots\Pi_{m+l}(q\otimes p).$$
In the last row, we assume that $p\in\Part(0,k+l)$ and $q\in\Part(0,l+m)$ and we denote $\Pi_i:=R^i\circ\Pi\circ R^{-i}$.
\end{proof}

This allows us to work just with partitions with lower points only.


\section{The problem and its motivation}

The motivation for our computation is the following. Suppose the parameter $\delta$ is actually a natural number $N\in\N$. Then one can define \cite{BS09} a functor $T$ from the category $\Partlin_N$ to the category of matrices $\Mat$ mapping elements $p\in\Partlin_N(k,l)$ to matrices representing linear maps $T_p\colon (\C^N)^{\otimes k}\to (\C^N)^{\otimes l}$.

Given a (quantum) group $G$ and its representations $\phi$, $\psi$, we denote by $\Mor(\phi,\psi)$ the linear space of all intertwiners between $\phi$ and $\psi$. It holds that representations of a given (quantum) group again form a category and this category can be modelled using partition categories via the functor $T$.

Let us consider, for example, the orthogonal group $O_N$ and its fundamental representation $\phi\colon O_N\to\GL(N,\C)$. Consider also the linear category of all pair partitions $\Pair_N\subset\Partlin_N$ (partitions, where all blocks are of size two). Then by Brauer's gerneralization of the Schur--Weyl duality \cite{Bra37}, we have that
$$\Mor(\phi^{\otimes k},\phi^{\otimes l})=\{T_p\mid p\in\Pair_N(k,l)\}.$$
Similarly, considering the symmetric group $S_N$ and its representation $\psi\colon S_N\to\GL(N,\C)$ by permutation matrices, we can model the intertwiners using the category of all partitions \cite{HR05,BS09}
$$\Mor(\psi^{\otimes k},\psi^{\otimes l})=\{T_p\mid p\in\Partlin_N(k,l)\}.$$

By so-called Tannaka--Krein duality, we have also the converse direction: Every compact group can be recovered from its representation theory. Thus, there is a mutual correspondence between categories $\Kat$ with 
$$\Pair_N\subset\Kat\subset\Partlin_N$$
and compact groups $G$ with
$$O_N\supset G\supset S_N.$$

The Tannaka--Krein duality was generalized by Woronowicz to the case of so-called compact quantum groups in \cite{Wor88} (the definition of compact quantum groups is also due to Woronowicz \cite{Wor87}). This also generalizes this correspondence. The smallest category of partitions $\NCPair$ consisting of all non-crossing pair partitions corresponds to the so-called \emph{free orthogonal quantum group} $O_N^+$ defined originally by Wang \cite{Wan95free}. Thus, we have a correspondence between all categories of partitions
$$\NCPair_N\subset\Kat\subset\Partlin_N$$
and compact quantum groups $G$ with 
$$O_N^+\supset G\supset S_N.$$
We are interested in finding examples (and possibly a classification) of partition categories.

A lot of success was achieved using a great idea of Banica and Speicher \cite{BS09} to consider categories, where the morphism spaces $\Kat(k,l)$ have a basis in terms of partitions (in contrast with a basis given only in terms of linear combinations of partitions). This allows to completely ignore the linear structure of the partition category and the problem becomes purely combinatorial. Such categories are called \emph{easy} and they were studied in many articles and finally their complete classification was found in \cite{RW16}.

\begin{defn}
\label{D.easy}
A linear category of partitions $\Kat$ is called \emph{easy} if, for every $k,l\in\N_0$, there is a set $\Cat(k,l)\subset\Part(k,l)$ such that $\Kat(k,l)=\spanlin\Cat(k,l)$. Otherwise it is called \emph{non-easy}.
\end{defn}

\begin{rem}
\label{R.easy}
The category operations map partitions to scalar multiples of partitions. This implies that any category that is generated by partitions is surely easy. (Iterating the category operations on generators we never get a linear combination of more than one partition.)
\end{rem}

In the non-easy case, very few results are available. This motivated the authors to use the computer to look for some examples. So, since the easy case is classified, we can focus on a more specific problem:

\begin{prob}
\label{prob}
Find examples of non-easy partition categories.
\end{prob}

Finally, let us mention a few remarks on the current status of theoretical research in this area. As we already mentioned, there were no non-easy categories known before we started our project. However, this has changed in the meantime. A lot of examples of non-easy quantum groups was recently discovered by Maassen by studying so-called group-theoretical quantum groups \cite{Maa18}. Another recent work dealing with non-easy categories is by Banica \cite{Ban18}. We would also like to mention another very active research direction, namely generalizing partition categories by colouring points \cite{Fre17}. The most interesting case are probably the two-coloured partitions describing unitary quantum groups. In this case, the classification is not complete even in the easy case. The known classification results are \cite{TW18,Gro18,MW18,MW19,MW19b,MW20}.

\section{The algorithm}
\label{sec.algorithm}

The idea of using a computer to find examples of non-easy categories is very simple. Consider a linear combination of partitions $p\in\Partlin_\delta(k,l)$ and try to generate the whole category $\Kat:=\langle p\rangle_\delta$ by iterating the category operations on the set $\{\idpart,\pairpart,p\}$. Unfortunately, there is no theoretical result that would assure that, after performing a given amount of category operations on the generators, we get all elements of $\Kat(i,j)$ for some $i,j\in\N_0$. That is, we are not aware of any algorithmic approach that could prove non-easiness of a category. However, we are able to prove \emph{easiness} of a category and hence, excluding the easy cases, obtain at least candidates for the non-easy categories. The precise way, how we use this to look for non-easy categories is described in Section \ref{sec.results}.

\subsection{Some observations}
Let us mention some observations making our computation easier.

First, when looking for examples of non-easy categories, it makes sense to look just for the categories generated by one element.
\begin{prop}
Let $p_1,\dots,p_n\in\Partlin_\delta(k,l)$. If $\langle p_1,\dots,p_n\rangle_\delta$ is non-easy, then at least one of the categories $\langle p_1\rangle_\delta,\dots,\langle p_n\rangle_\delta$ is non-easy.
\end{prop}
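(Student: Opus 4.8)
The plan is to prove the contrapositive: assuming that each $\langle p_i\rangle_\delta$ is easy, I would show that the joint category $\Kat:=\langle p_1,\dots,p_n\rangle_\delta$ is easy as well. The guiding idea is Remark~\ref{R.easy}: a category is easy as soon as it is generated by genuine partitions rather than by proper linear combinations. So the entire argument reduces to exhibiting a set of honest partitions that generates $\Kat$.

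First I would record, for each $i$ and each $k,l$, a spanning set $\Cat_i(k,l)\subseteq\Part(k,l)$ with $\langle p_i\rangle_\delta(k,l)=\spanlin\Cat_i(k,l)$, which exists by Definition~\ref{D.easy}. The key elementary observation is that these spanning partitions actually lie inside the category they span: from $\spanlin\Cat_i(k,l)=\langle p_i\rangle_\delta(k,l)$ we read off $\Cat_i(k,l)\subseteq\langle p_i\rangle_\delta(k,l)$. Collecting everything, set $S:=\bigcup_{i,k,l}\Cat_i(k,l)$, a set of partitions, and let $\Kat':=\langle S\rangle_\delta$ be the smallest category containing $S$ (the definition of the generated category extends verbatim to an arbitrary set of partitions, since each category operation involves only finitely many elements). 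By Remark~\ref{R.easy}, $\Kat'$ is easy.

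It then remains to show $\Kat=\Kat'$. For the inclusion $\Kat\subseteq\Kat'$, note that each generator satisfies $p_i\in\langle p_i\rangle_\delta(k,l)=\spanlin\Cat_i(k,l)\subseteq\Kat'$, because $\Kat'$ contains $S\supseteq\Cat_i$ and is closed under linear combinations; since $\Kat$ is the smallest category containing all the $p_i$, minimality gives $\Kat\subseteq\Kat'$. For the reverse inclusion, take any partition $s\in S$, say $s\in\Cat_i$; then $s\in\langle p_i\rangle_\delta$, and since $p_i\in\Kat$ and $\Kat$ is a category, minimality of $\langle p_i\rangle_\delta$ yields $\langle p_i\rangle_\delta\subseteq\Kat$, hence $s\in\Kat$. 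Thus $S\subseteq\Kat$, and minimality of $\Kat'$ gives $\Kat'\subseteq\Kat$. Therefore $\Kat=\Kat'$ is easy, which is exactly the contrapositive of the claim.

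I do not expect a serious obstacle here; the only point that needs care is the bookkeeping with the minimality (``smallest category'') property, invoked in both directions, together with the fact that easiness supplies spanning partitions that genuinely belong to the category. Everything else is a direct application of Remark~\ref{R.easy}.
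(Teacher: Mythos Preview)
Your argument is correct and is essentially the same as the paper's: both prove the contrapositive by observing that if every $\langle p_i\rangle_\delta$ is easy, then each $p_i$ lies in the span of genuine partitions belonging to $\langle p_i\rangle_\delta\subseteq\langle p_1,\dots,p_n\rangle_\delta$, so the joint category is generated by partitions and hence easy by Remark~\ref{R.easy}. The paper compresses this into a single sentence, whereas you carefully spell out the two minimality inclusions, but the underlying idea is identical.
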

\begin{proof}
If all the categories $\langle p_i\rangle_\delta$ are easy, then $\langle p_1,\dots, p_n\rangle_\delta$ is generated by partitions, which, according to Remark \ref{R.easy}, implies that it is easy.
\end{proof}

Secondly, the following proposition describes how to prove easiness of the category~$\langle p\rangle_\delta$.

\begin{prop}
\label{P.summands}
Consider $p\in\Partlin_\delta(k,l)$ and express it in the basis of partitions as $p=\sum_{i}\alpha_ip_i$, where $\alpha_i\in\C$ are non-zero numbers and $p_i\in\Part(k,l)$ are mutually different partitions. Then the category $\langle p\rangle_\delta$ is easy if and only if it contains all the partitions $p_i$.
\end{prop}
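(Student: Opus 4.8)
The plan is to prove the two implications separately, relying in an essential way on the fact, recorded in \cref{secc.op}, that the partitions $\Part(k,l)$ form a \emph{basis} of $\Partlin_\delta(k,l)$; in particular the expansion $p=\sum_i\alpha_ip_i$ is the unique representation of $p$ in the partition basis.

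For the direction ``$\Leftarrow$'', I would assume that $\langle p\rangle_\delta$ contains all the $p_i$ and show that $\langle p\rangle_\delta=\langle p_1,\dots,p_n\rangle_\delta$. On one hand, since $\langle p_1,\dots,p_n\rangle_\delta$ is by definition the smallest category containing all the $p_i$, the hypothesis gives $\langle p_1,\dots,p_n\rangle_\delta\subseteq\langle p\rangle_\delta$. On the other hand, $p=\sum_i\alpha_ip_i$ is a linear combination of the $p_i$, so $p\in\langle p_1,\dots,p_n\rangle_\delta$ and hence $\langle p\rangle_\delta\subseteq\langle p_1,\dots,p_n\rangle_\delta$. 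Thus the two categories coincide, and $\langle p\rangle_\delta$ is generated by the partitions $p_1,\dots,p_n$. By \cref{R.easy} any category generated by partitions is easy, so $\langle p\rangle_\delta$ is easy.

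For the direction ``$\Rightarrow$'', I would assume that $\langle p\rangle_\delta$ is easy and produce, via \cref{D.easy}, a set $\Cat(k,l)\subseteq\Part(k,l)$ with $\langle p\rangle_\delta(k,l)=\spanlin\Cat(k,l)$. Since $p\in\langle p\rangle_\delta(k,l)$, the element $p$ lies in $\spanlin\Cat(k,l)$, i.e.\ it is a linear combination of partitions drawn from $\Cat(k,l)$. Here is the one step that requires care: because $\Part(k,l)$ is a basis, $\spanlin\Cat(k,l)$ is exactly the coordinate subspace consisting of those elements whose unique basis expansion involves only partitions from $\Cat(k,l)$. As $p=\sum_i\alpha_ip_i$ with every $\alpha_i\ne 0$ is precisely that unique expansion, each $p_i$ must already belong to $\Cat(k,l)\subseteq\langle p\rangle_\delta(k,l)$. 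Hence $\langle p\rangle_\delta$ contains all the $p_i$, as claimed.

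The only genuinely non-trivial point is the last translation: passing from the single membership $p\in\spanlin\Cat(k,l)$ to the membership of each individual summand $p_i$. This is where linear independence of partitions is indispensable, since it rules out any cancellation among the $p_i$ and guarantees that a partition appearing in $p$ cannot be ``absorbed'' into a combination of other partitions of $\Cat(k,l)$. Everything else is formal manipulation with the definition of the generated category and \cref{R.easy}.
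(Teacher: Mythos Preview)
Your proof is correct and follows essentially the same approach as the paper's: the ``$\Rightarrow$'' direction is exactly the paper's ``uniqueness of coordinates with respect to a given basis,'' and your ``$\Leftarrow$'' direction unpacks the paper's bare reference to \cref{R.easy} by first establishing $\langle p\rangle_\delta=\langle p_1,\dots,p_n\rangle_\delta$. The only difference is that you spell out the details the paper leaves implicit.
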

\begin{proof}
Left-right implication follows from uniqueness of coordinates with respect to a given basis. Right-left multiplication follows from Remark \ref{R.easy}.
\end{proof}

Thirdly, the following result further reduces the computational complexity. In particular, it allows to avoid using the antilinear operation of reflection.

\begin{prop}
\label{P.refl}
Let $S$ be a set of linear combinations of partitions on one line which is closed under the operation of reflection and contains the pair partition $\pairpart$. Then any element of $\langle S\rangle$ can be obtained by performing a finite amount of tensor products, contractions and rotations and taking linear combinations. It is automatically closed under reflections.
\end{prop}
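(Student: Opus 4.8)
The plan is to compare $\langle S\rangle$ with the set $T$ obtained from $S$ by applying only tensor products, contractions, rotations, and linear combinations (i.e.\ all word operations \emph{except} reflection). Since each of these is a word operation, $T$ is contained in the one-line part of $\langle S\rangle$. Conversely, if I can show that $T$ is \emph{already} closed under reflection, then $T$ contains $S$ and $\pairpart$ and is closed under all four word operations, so by the equivalence of word operations and category operations established earlier it contains the smallest such collection, namely $\langle S\rangle$. Hence $T=\langle S\rangle$, which is exactly the assertion, and closure of $\langle S\rangle=T$ under reflection drops out as a by-product.

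So everything reduces to proving that $T$ is closed under $\star$, which I would do by induction on the number of operations used to build an element of $T$ from $S$, using the way $\star$ commutes with the generating operations. The base case holds because $S$ is closed under reflection by hypothesis and $\pairpart^\star=\pairpart$. For the inductive step I would record the commutation rules in word language: reversing a word gives $(p\otimes q)^\star=q^\star\otimes p^\star$; for the rotation $R$, which moves the last letter to the front, reversal turns it into the opposite cyclic shift, so $(Rp)^\star=R^{-1}p^\star$, where $R^{-1}$ is available on $T$ since on $l$ points $R^l=\id$; and for a linear combination antilinearity gives $(\sum_i\alpha_i p_i)^\star=\sum_i\bar\alpha_i p_i^\star$, which stays in $T$ because $T$ is a linear subspace. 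In each case the induction hypothesis places the right-hand side back in $T$.

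The only genuinely computational rule — and the step I expect to be the main obstacle — is the one for contraction. Writing $\Pi$ for the contraction of the first two points and noting that $\Pi R^2$ contracts the last two points, I would verify directly on words that $(\Pi p)^\star=\Pi R^2(p^\star)$: reversing and then contracting the last two points identifies the same pair of letters and deletes the same two positions as contracting the first two points and then reversing. The delicate point is to check that the multiplicative loop factor $\delta$ is produced in exactly the same situations on both sides; here one uses that $\delta$ is self-adjoint ($\bar\delta=\delta$), so the conjugation introduced by the antilinearity of $\star$ does not disturb the loop factor. Since $T$ is closed under rotations and contraction, $\Pi R^2(p^\star)\in T$ by the induction hypothesis, which finishes the inductive step.

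Collecting the cases shows that $T$ is closed under $\star$. As $T$ is by construction closed under tensor products, contractions, rotations, and linear combinations and contains $S\ni\pairpart$, it is closed under all word operations; by the correspondence with category operations it therefore contains $\langle S\rangle$, and the reverse inclusion is immediate. This yields $T=\langle S\rangle$ together with its automatic closure under reflection.
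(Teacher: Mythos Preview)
Your proof is correct and takes essentially the same approach as the paper: both reduce the claim to the three commutation identities $(p\otimes q)^\star=q^\star\otimes p^\star$, $(Rp)^\star=R^{-1}p^\star$, and a formula expressing $(\Pi p)^\star$ via rotations and a contraction of $p^\star$, with the induction you spell out left implicit in the paper. Your contraction identity $(\Pi p)^\star=\Pi R^2(p^\star)$ differs cosmetically from the paper's $(\Pi p)^\star=(R^{-2}\circ\Pi\circ R^2)p^\star$, but either serves, and your explicit observation that matching the loop factor uses $\bar\delta=\delta$ is a point the paper passes over in silence.
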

\begin{proof}
We have
\begin{align*}
(p\otimes q)^\star&=q^\star\otimes p^\star,\\
(\Pi\,p)^\star&=(R^{-2}\circ\Pi\circ R^2)p^\star,\\
(Rp)^\star&=R^{-1}p^\star.\qedhere
\end{align*}
\end{proof}

Finally, the following proposition allows to reduce the amount of generators $p$ we have to consider.

\begin{prop}
\label{P.gcd}
Consider $p,q\in\Partlin_\delta(0,k)$ and let $f$ be a polynomial of degree less than $k$. Then $\langle f(R)p+q\rangle=\langle g(R)p+\tilde q\rangle$, where $g(x)=\gcd(f(x),x^k-1)$ and $\tilde q\in\Partlin_\delta(0,k)$ is a linear combination of rotations of $q$.
\end{prop}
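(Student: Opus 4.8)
The plan is to exploit the fact that on a fixed space $\Partlin_\delta(0,k)$ the rotation satisfies $R^k=\id$ (rotating a partition on $k$ points $k$ times returns it to itself), so that $\Partlin_\delta(0,k)$ is a module over the ring $A:=\mathbb F[x]/(x^k-1)$ with $x$ acting as $R$, where $\mathbb F$ denotes the field over which we work (e.g.\ $\C$, with $\delta$ regarded as a fixed scalar, so that $\gcd(f,x^k-1)$ is meaningful). For a single generator $v:=f(R)p+q$, the category $\langle v\rangle$ automatically contains every linear combination of rotations of $v$, that is, the whole cyclic submodule $Av=\{a(R)v\mid a\in\mathbb F[x]\}$, since rotation and linear combination are category operations. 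Hence it suffices to produce a \emph{unit} $c\in A$ for which $w:=g(R)p+c(R)q$ equals $c(R)v$: then $w=c(R)v\in\langle v\rangle$ gives one inclusion, while applying the inverse yields $v=c^{-1}(R)w\in\langle w\rangle$ and hence $\langle v\rangle=\langle w\rangle$; the element $\tilde q:=c(R)q$ is by construction a linear combination of rotations of $q$, as required.

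The construction of $c$ rests on the semisimplicity of $A$. Since $x^k-1$ is separable, factoring into the distinct linear factors $x-\zeta$ over $\mathbb F$ with $\zeta$ ranging over the $k$-th roots of unity, the Chinese remainder theorem gives $A\cong\prod_{\zeta}\mathbb F$ via the evaluation map $a\mapsto(a(\zeta))_\zeta$. In this product ring an element is a unit precisely when each of its components is nonzero. First I would observe that the defining property $g=\gcd(f,x^k-1)$ forces $f$ and $g$ to vanish at \emph{exactly} the same roots of unity: for a $k$-th root of unity $\zeta$ one has $g(\zeta)=0$ iff $(x-\zeta)\mid g$ iff $(x-\zeta)\mid\gcd(f,x^k-1)$ iff $(x-\zeta)\mid f$ iff $f(\zeta)=0$. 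I can therefore define $c\in A$ componentwise by $c(\zeta):=g(\zeta)/f(\zeta)$ whenever $f(\zeta)\neq0$ and $c(\zeta):=1$ otherwise; every component is nonzero, so $c$ is a unit, and by construction $c(\zeta)f(\zeta)=g(\zeta)$ at every $\zeta$, i.e.\ $cf=g$ in $A$.

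It then only remains to assemble the pieces. Using the operator identity $c(R)f(R)=(cf)(R)=g(R)$ on $\Partlin_\delta(0,k)$, I compute $c(R)v=c(R)f(R)p+c(R)q=g(R)p+c(R)q=w$, so that with $\tilde q:=c(R)q$ the generator $w$ is a linear combination of rotations of $v$ and lies in $\langle v\rangle$; applying $c^{-1}(R)$ recovers $v=c^{-1}(R)w$ inside $\langle w\rangle$. Together with the module remark of the first paragraph this gives the asserted equality $\langle f(R)p+q\rangle=\langle g(R)p+\tilde q\rangle$.

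The step I expect to be the main obstacle is the construction of the unit $c$ — or rather, recognising that one needs a unit at all. The naive approach of taking a B\'ezout cofactor $a$ with $g\equiv af\pmod{x^k-1}$ and setting $\tilde q:=a(R)q$ proves only $\langle w\rangle\subseteq\langle v\rangle$; the reverse inclusion can fail, because $a$ may vanish at some root of unity $\zeta$ at which $f$ also vanishes but the $\zeta$-component of $q$ does not, so that the corresponding component of $w$ is irrecoverably lost. The point of the hypothesis $g=\gcd(f,x^k-1)$, as opposed to an arbitrary divisor of $f$, is precisely to guarantee that $f$ and $g$ share the same vanishing set among the roots of unity, which is exactly what allows the cofactor to be corrected into an invertible $c$ and thus makes both inclusions hold simultaneously.
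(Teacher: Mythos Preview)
Your proof is correct and follows the same approach as the paper: both arguments find a unit $c$ in $A=\C[x]/(x^k-1)$ with $cf=g$ and then use $c(R)$ and $c^{-1}(R)$ to pass between the two generators, yielding both inclusions. The only difference is cosmetic---the paper writes $f=\tilde f g$ in $A$ with $\tilde f$ chosen coprime to $x^k-1$ (hence invertible in the finite-dimensional algebra $A$) and takes $c=\tilde f^{-1}$, whereas you construct $c$ componentwise via the Chinese remainder decomposition $A\cong\prod_\zeta\C$.
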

\begin{proof}
Consider $f(x)$ as an element of the algebra $A:=\C[x]/I$, where $I$ is the ideal generated by $x^k-1$. Since $R^k=I$, the evaluation $h(R)$ for $h\in A$ does not depend on the particular representative.

There certainly exists $\tilde f\in A$ such that $f=\tilde f g$ and $\tilde f$ is coprime to $x^k-1$ (just take $\tilde f(x)=(f(x)+j(x^k-1))/g(x)$ for appropriate $j\in\N$). Then, $\tilde f$ as an element of $A$, is not a divisor of zero and hence, since $A$ is finite dimensional, $\tilde f$ is invertible. Therefore, there exists $h\in A$ such that $hf=h\tilde fg=g$ and we have that $\langle f(R)p+q\rangle\supset\langle h(R)(f(R)p+q)\rangle=\langle g(R)p+\tilde q\rangle$, where $\tilde q:=h(R)q$.

The opposite inclusion is easy $\langle g(R)p+\tilde q\rangle\supset\langle \tilde f(R)(g(R)p+\tilde q)\rangle=\langle f(R)p+q\rangle$.
\end{proof}

%

\subsection{Preprocesing}
First, we need to compute the matrices of the operations of tensor product, contraction and rotation as (bi)linear maps. Note that the number of partitions of $l$ points is given by \emph{Bell numbers} $B_l$. So, the dimension of $\Partlin_\delta(0,l)$ is $B_l$. Thus, we can identify $\Partlin_\delta(0,l)\simeq\C^{B_l}$ identifying the partitions $p\in\Part(0,l)$ with the standard basis in $\C^{B_l}$. Actually, it is more convenient not to specify the value of $\delta$ and consider rather $\Partlin(0,l)\simeq R^{B_l}$ for $R:=\C[\delta]$.

The tensor product $\otimes\colon\Partlin(0,k)\times\Partlin(0,l)\to\Partlin(0,k+l)$ of partitions can be viewed as a linear map
$$\tens\colon R^{B_kB_l}\to R^{B_{k+l}}.$$
Similarly, we can define the matrices corresponding to contraction and rotation
$$\contr\colon R^{B_l}\to R^{B_{l-2}},\qquad\rot\colon R^{B_l}\to R^{B_l}.$$

We fix a \emph{length bound} $l_0\in\N_0$ and compute all those matrices for $l\le l_0$ (resp. $k+l\le l_0$ in case of the tensor product).

\subsection{Adding procedures}
We define modules $K_l\subset R^{B_l}$ for $l\le l_0$ that correspond to the spaces $\Kat(0,l)$ of some category $\Kat\subset\Partlin$. We define the following procedures.

The procedure \textsc{AddParts} takes a set $S\in R^{B_l}$ representing a set of linear combinations of partitions from $\Kat(0,l)$ and adds it to the module $K_l$. In addition, it adds all the rotations of the partitions to $K_l$ and all their contractions to the corresponding $K_{l-2i}$. Thus, we end up with an approximation of $\Kat$, which contains the set $S$ and is closed under taking rotations and contractions. 

\begin{algorithm}
\caption{Adding a set of partitions to $K_l$}\label{addm}
\begin{algorithmic}[1]
\Procedure{AddParts}{$l\in\{1,\dots,l_0\},S\subset R^{B_l}$}
\If{$l\ge 2$}
	\State \textsc{AddParts}($l-2,\contr(S)$)
\EndIf
\State $K_l:=K_l+S$
\For{$j\in\{1,\dots,l-1\}$}
	\State $S:=\rot(S)$
	\If{$l\ge 2$}
		\State \textsc{AddParts}($l-2,\contr(S)$)
	\EndIf
	\State $K_l:=K_l+S$
\EndFor
\EndProcedure
\end{algorithmic}
\end{algorithm}

The procedure \textsc{AddTensors} takes all pairs $x\in K_k$ and $y\in K_l$ such that $k+l\le l_0$ and computes the vector corresponding to the partition tensor product $\tens(x\otimes y)$. Note that we can assume $k\le l$ since we have $q\otimes p=R^l(p\otimes q)$ for $p\in\Partlin(0,k)$ and $q\in\Partlin(0,l)$. To add the results to the category approximation, we use the procedure \textsc{AddParts}, so we add also all the rotations and contractions of the tensor products.

\begin{algorithm}
\caption{Add tensor products to $K_i$'s}\label{addcat}
\begin{algorithmic}[1]
\Procedure{AddTensors}{}
\For{$k\in\{1,\dots,\lfloor l_0/2\rfloor\}$}
	\For{$l\in\{k,\dots,l_0-k\}$}
		\State\textsc{AddParts}($k+l,\tens(K_k\otimes K_l)$)
	\EndFor
\EndFor
\EndProcedure
\end{algorithmic}
\end{algorithm}

\begin{table}[b]
\begin{tabular}{r|cccccccccccc}
  $l$  &  1 & 2 & 3 &  4 &  5 &   6 &   7 &    8 &     9 &     10 &     11 &      12 \\\hline
$B_l$  &  1 & 2 & 5 & 15 & 52 & 203 & 877 & 4\,140 & 21\,147 & 115\,975 & 678\,570 & 4\,213\,597
\end{tabular}
\caption{Bell numbers}
\label{t.bell}
\end{table}

\subsection{The algorithm}
Suppose for simplicity, we have one generator $p\in\Partlin(0,l_1)$, $l_1\le l_0$. Then we can compute an approximation of $\Kat:=\langle p\rangle$ by performing the following algorithm.
\begin{enumerate}
\item \textsc{AddParts}($2,\pairpart$); \textsc{AddParts}($l_1,\{p,\tilde p\}$);
\item Repeat \textsc{AddTensors}() until this procedure leaves all the modules $K_l$ unchanged.
\end{enumerate}
At this stage, our category approximation is closed under contractions, rotations, reflections, and tensor products whose result has length lower or equal to the length bound $l_0$. (Note that the closedness with respect to reflections follows from Proposition \ref{P.refl}.)

\subsection{Limits of the algorithm}
The fact that the category approximation is closed under the category operations in the above sense, however, does not mean that our approximation is faithful since it may happen that in order to obtain a partition on $l$ points for $l\le l_0$ we need to compute an intermediate result with length greater than the length bound $l_0$ first.

If we need more reliable approximation, we need to increase the length bound $l_0$. Note that if we choose the length bound $l_0$ to be lower than $2l_1$ then our algorithm cannot even compute $p\otimes p$ for the generator $p$, so we can expect the results to be quite unreliable.

The value of the length bound $l_0$ has, of course, its limits. The Bell numbers $B_l$ grow exponentially with $l$, so the module dimensions become huge very quickly. In Table \ref{t.bell}, we list the Bell numbers for some small $l$. We see that the maximal value of $l_0$, which can be achieved for usual computer, is about $l_0=10$. In Section \ref{sec.results}, we will discuss results for generators of length $l_1\le 4$, which is pretty much close to the maximum that can be achieved without further assumptions.

Note that it may also be convenient to add some extra variables $a_1,\dots,a_m$ to the ring $R$. Then we can start with a generator $p$ depending on $a_1,\dots,a_m$ as parameters. However, each extra variable again notably increases the computation complexity. See Section \ref{sec.results} for how one can handle such extra parameters.

%
%
%
%

\section{Concrete computations}
\label{sec.results}

In this section, we are going to present concretely how our algorithm is applied. The algorithm was implemented in \textsc{Singular} \cite{Singular}. We also used Maple\footnote{Maple is a~trademark of Waterloo Maple Inc.} \cite{Maple} for solving systems of polynomial equations. In all computations that follow, we use the length bound $l_0:=8$.

Let us comment a bit on the nature of the results obtained in this section.

\begin{rem}[On the character of results in this section]
This section contains the main results of the computational part of the article. The results will typically have the following form
\begin{quote}
The following categories constitute the only possible candidates for non-easy categories of the form $\langle p\rangle_\delta$, where $\delta\in\C$ and $p\in\Part(k)$ satisfy \emph{those and those} assumptions\dots
\end{quote}
We use our algorithm to obtain such results. The code of our implementation is available on github\footnote{See \href{https://github.com/gromadan/partcat/}{github.com/gromadan/partcat}} so that everybody can check our computations. In principle, it is also possible to trace back the actual computations the computer did and prove those results by hand. We will indicate how to do this in the most simple non-trivial case of generator on three points in Section~\ref{secc.len3}.

Nevertheless, these statements and their assumptions are going to be rather technical. They should not be considered as some fundamental mathematical results of the theory and hence it would probably not be very useful to try to formulate detailed mathematical proofs for each of them. The statements are interesting mainly for the reason that they bring a list of candidates for non-easy categories, with which we can work afterwards. To emphasize this, the corresponding paragraphs formulating those statements will not be labelled as \emph{theorems} or \emph{propositions}, but simply as \emph{candidates}. The truly mathematical work then starts in Section~\ref{sec.noneasydirect}, where we prove that those candidates indeed constitute instances of new non-easy linear categories of partitions.
\end{rem}

\begin{rem}[On linear independence and semisimplicity]
We would like to stress that, by definition, we assume all partitions to be linearly independent. This might be a bit confusing for researchers coming from quantum groups as the linear maps $T_p$ associated to partitions are not linearly independent. Nevertheless, in this paper, we do not work with the maps $T_p$, but with the partitions and their formal linear combinations themselves.

The linear functor mapping $p\mapsto T_p$ is then surely non-injective. As a matter of fact, the kernel of this functor can be described in a very nice way: We can define so-called \emph{negligible morphisms} to be those $p\in\Partlin(0,k)$ such that $q^*p=0$ for every $q\in\Partlin(0,k)$. Those morphisms are then mapped to zero under arbitrary unitary fibre functor $p\mapsto T_p$. See e.g.\ \cite{FM20} for more information.

One might then ask, whether all the candidates produced in this section are so-to-say honestly distinct and non-easy or whether they differ only by some negligible morphisms. First, note that for a generic $\delta$, there are typically no negligible morphisms (for instance, in case of the category of all partitions $\Partlin_\delta$, there exist negligible morphisms only if $\delta\in\N_0$ \cite{Del07,FM20}). Secondly, if we are interested in applications in quantum groups, we choose $\delta=N\in\N$. In this case, there surely are some negligible morphisms $p\in\Partlin(0,k)$; however, they appear only for $k>N$ (see e.g.\ \cite[Cor.~3.4]{GW18}). So, considering two distinct partition categories generated by partitions on $k\le k_0$ points (in this section, we consider $k\le 4$), then they surely correspond to distinct quantum groups if we take $\delta=N>k_0$.
\end{rem}

\begin{rem}[On the choice of $\delta$]
During our computations we are going to make some restrictions on the parameter $\delta$. The reason is that for some concrete parameter values, there may exist some additional categories of partitions, which we do not consider to be interesting and hence we want to neglect of them. Due to our quantum group motivation and for reasons mentioned in the previous remark, we are particularly interested in categories that are non-easy for $\delta\in\N$, $\delta>4$.

Typically if $\delta$ is some small real number, it happens that our algorithm would find negligible morphisms as generators of non-easy categories. We are not interested in those as they do not define any new quantum groups. Secondly, we often exclude negative $\delta$ since again those categories do not have a straightforward interpretation in the theory of quantum groups. Nevertheless, the reader who is interested in the presented solutions as examples of abstract monoidal categories may want to go through the whole computation again and analyse the cases that we skipped here. In addition, we should mention that some solutions working for $\delta=4$ that we skipped here may be relevant also for the theory of compact quantum groups.
\end{rem}

\subsection{Generator of length one and two}

The space~$\Partlin(1)$ is one-dimensional being the span of the singleton partition. Therefore, any category generated by an element of length one is easy.

Similarly for the length two. We have $\Partlin(2)=\spanlin\{\pairpart,\Lab\}$. Since $\pairpart$ is in any category by definition, we again have that any category generated by an element of length two is easy.

\subsection{Generator of length three}
\label{secc.len3}

For $l=3$, we have the following partitions
$$\Part(3)=\{\Laaa,\Laab,\Laba,\Labb,\Labc\}.$$

So, a~general element $p\in\Partlin_\delta(3)$ can be expressed as follows
$$p=a\,\Laaa+b_1\,\Laab+b_2\,\Labb+b_3\,\Laba+c\,\Labc,$$
where $a,b_1,b_2,b_3,c\in\C$. Now, our goal is to exclude such values of those parameters, for which $\Kat:=\langle p\rangle_\delta$ is easy.

\begin{lem}
\label{L.3crit}
A~linear category $\Kat=\langle p\rangle_\delta$ with $p\in\Partlin_\delta(3)$ is easy if and only if $\singleton\in\Kat$. Hence, $\Kat$~is non-easy if and only if $\Kat(1)$ is empty.
\end{lem}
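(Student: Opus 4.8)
My plan is to run both implications through Proposition~\ref{P.summands}, which says that $\langle p\rangle_\delta$ is easy exactly when it contains every partition occurring with nonzero coefficient in $p$. Writing $p=a\,\Laaa+b_1\,\Laab+b_2\,\Labb+b_3\,\Laba+c\,\Labc$, and assuming $p\neq0$ as befits a generator, the statement reduces to the claim that $\Kat$ contains all of these summands if and only if $\singleton\in\Kat$. The concluding ``hence'' is then automatic: since $\Partlin_\delta(1)=\spanlin\{\singleton\}$ is one-dimensional, $\Kat(1)$ is either $\{0\}$ or the whole space, according to whether $\singleton\in\Kat$.

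For the direction $\singleton\in\Kat\Rightarrow\Kat$ easy, the key observation I would exploit is that, among the five partitions on three points, only the triple block $\Laaa$ is \emph{not} already generated by $\singleton$ together with the omnipresent $\pairpart$. Indeed $\Labc=\singleton\otimes\singleton\otimes\singleton\in\Kat$, while $\Laab=\pairpart\otimes\singleton\in\Kat$ and hence its rotations $\Laba,\Labb\in\Kat$ by closedness under rotation. Subtracting these known elements from $p$ isolates $a\,\Laaa\in\Kat$; whether or not $a=0$, every summand of $p$ now lies in $\Kat$, and Proposition~\ref{P.summands} gives easiness.

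For the converse, I would start from easiness, which by Proposition~\ref{P.summands} places some summand $p_i$ of $p$ into $\Kat$, and then extract $\singleton$ from it by a single contraction of a suitable rotation. The computations I expect to need are $\Pi(\Laaa)=\singleton$ and $\Pi(\Labb)=\singleton$, the latter reached from $\Laab$ or $\Laba$ by rotation; in both the merged block survives the deletion, so no factor of $\delta$ appears and $\singleton\in\Kat$. This handles every $p_i$ carrying a block of size at least two.

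The step requiring the most care, and the only real obstacle, is the remaining case in which $p$ is a pure multiple of $\Labc$. Here the one available contraction yields $\Pi(\Labc)=\delta\,\singleton$, so one must assume $\delta\neq0$ to conclude $\singleton\in\Kat$; for $\delta=0$ the category $\langle\Labc\rangle_0$ is easy, being generated by partitions (cf.\ Remark~\ref{R.easy}), yet contains no singleton, so this degenerate value has to be excluded. Beyond correctly tracking when the contraction rule emits the factor $\delta$, the remainder is routine bookkeeping in the word calculus.
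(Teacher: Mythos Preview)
Your argument is essentially the paper's own proof: both directions go through Proposition~\ref{P.summands} and the observation that every partition in $\Part(0,3)$ except $\Laaa$ is already generated by~$\singleton$ and~$\pairpart$, while conversely each such partition yields~$\singleton$ by a single contraction.

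You are right to flag the case $p=c\,\Labc$ at $\delta=0$. The category $\langle\Labc\rangle_0$ is easy (Remark~\ref{R.easy}), yet $\singleton\notin\langle\Labc\rangle_0$: the only contraction available on~$\Labc$ produces $\delta\,\singleton=0$, and one can check more generally (e.g.\ via the invariant $n(p)-2b(p)$, where $n$ is the number of points and $b$ the number of blocks, which is additive under tensor products and non-increasing under nonzero contractions at $\delta=0$) that no sequence of category operations reaches~$\singleton$. So the lemma, as stated without restriction on~$\delta$, is literally false in this boundary case; the paper's proof asserts ``each of them generates the singleton'' without noting the exception. Your exclusion of $\delta=0$ is the correct fix, and the rest of your proof is sound.
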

\begin{proof}
If $\singleton\in\Kat$, then all the partitions $\Laab$, $\Labb$, $\Laba$, and~$\Labc$ are in~$\Kat$. If $p$ contains also $\Laaa$ as a~summand, then also $\Laaa\in\Kat$. So, we have either $\Kat=\langle\singleton\rangle_\delta$ or $\Kat=\langle\Laaa,\singleton\rangle_\delta$. In both cases $\Kat$ is easy. Conversely, if $\Kat$ is easy, then it must contain at least one of the partitions in $\Part(3)$. Each of them generate the singleton.
\end{proof}

Running \textsc{AddParts}$(p)$ (over the ring $\C[\delta,a,b_1,b_2,b_3,c]$), we get immediately that $\Kat(1)$ contains the following elements\footnote{Details of the computation at \url{https://nbviewer.jupyter.org/github/gromadan/partcat/blob/master/lincat/Section_4_2.ipynb}}
$$(a+b_1+b_2+\delta b_3+\delta c)\singleton,\quad (a+b_1+\delta b_2+b_3+\delta c)\singleton,\quad (a+\delta b_1+b_2+b_3+\delta c)\singleton.$$
If $\Kat$ is non-easy, then $\Kat(1)$ must be empty, which leads to equations
\begin{align*}
a+\phantom\delta b_1+\phantom\delta b_2+\delta b_3+\delta c&=0\cr
a+\phantom\delta b_1+\delta b_2+\phantom\delta b_3+\delta c&=0\cr
a+\delta b_1+\phantom\delta b_2+\phantom\delta b_3+\delta c&=0.
\end{align*}
By subtracting the equations one from each other, we get $b_i(1-\delta)=b_j(1-\delta)$ for $i,j=1,2,3$. Suppose $\delta\neq 1$, then non-easiness implies that $b:=b_1=b_2=b_3$. Substituting this to one of the equations, we get an additional condition
$$a+(2+\delta)b+\delta c=0.$$
So, we can put $a:=-(2+\delta)b-\delta c$. Now, we can run our algorithm again over $\C[\delta,b,c]$. After one iteration of \textsc{AddTensors}, we get that $\Kat$ contains 
$$(\delta-1)(\delta-2)(\delta c+2b)(\delta c^2+2bc-b^2)\singleton.$$
Thus, excluding the case $\delta=1,2$, the category can be non-easy only if 
$$b=-c\delta/2\quad\hbox{or}\quad b=(1\pm\sqrt{\delta+1})c.$$
For $c=0$, we have also $b=0$, so the category is easy. For $c\neq 0$, we can normalize~$p$ dividing by~$c$.

\begin{cand}
\label{Cand1}
Assuming $\delta\in\C\setminus\{0,1,2\}$, the following are the only candidates on non-easy linear categories of partitions that are generated by a single element $p\in\Partlin_\delta(3)$:
\begin{gather*}
\langle \delta^2\,\Laaa-\delta(\Labb+\Laab+\Laba)+2\Labc\rangle_\delta,\cr
\left\langle \left(-2(1+\delta)-(2+\delta)\sqrt{\delta+1}\right)\,\Laaa-(1+\sqrt{\delta+1})(\Labb+\Laab+\Laba)+\Labc\right\rangle_\delta,\cr
\left\langle \left(-2(1+\delta)+(2+\delta)\sqrt{\delta+1}\right)\,\Laaa-(1-\sqrt{\delta+1})(\Labb+\Laab+\Laba)+\Labc\right\rangle_\delta.
\end{gather*}
\end{cand}

\begin{rem}
We could have derived the equations providing the conditions for non-easiness even without our algorithm. Indeed, the linear ones can be written as $\Pi R^ip=0$ for $i=0,1,2$ and the quadratic one as 
$$\Pi_2\Pi_3(p\otimes p)=\ethine(p\otimes p)=0.$$
This also provides a rigorous mathematical prove of the above statement.

The actual non-easiness of the categories will be proven in Section~\ref{sec.noneasydirect}. The first category is proven non-easy by Prop.~\ref{P.contractions} or, alternatively, Prop.~\ref{P.Pcnoneasy}; the other two by Prop.~\ref{P.Vcnoneasy}.

The algorithm was useful first for providing the idea to solve such equations and secondly for checking (although not proving) that the categories remain non-easy even after more iterations of the tensor product.
\end{rem}

\begin{rem}
The fact that there are (up to scaling) only three isolated candidates of non-easy generators already has interesting consequences. First of all, it follows that if those categories indeed are non-easy (which will be proven in Section~\ref{sec.noneasydirect}), then they must necessarily be distinct. Indeed, pick two of the categories and denote by $p_1$ and $p_2$ their generators. If we had $\langle p_1\rangle_\delta=\langle p_2\rangle_\delta$, then $p_1+tp_2$ would be a one-parameter family of linear combinations of partitions that generate a non-easy category.

Another consequence: The bottom two candidates cannot be non-easy unless $\delta>-1$ (that is, unless all the coefficients are real). The argumentation is similar. Denoting $p$ the corresponding generator, we necessarily need $p=p^\star$ since otherwise $p+tp^\star$ forms a one-parameter family of non-easy generators.
\end{rem}

\subsection{Generator of length four, case of no singletons}
A generator $p\in\Partlin_\delta(4)$ can be parametrized as follows
\begin{multline}
\label{eq.p1}
p=a_1\Laaaa+a_2\Labab+b_1\Laaab+b_2\Labbb+b_3\Labaa+b_4\Laaba+\\
c_1\Laabc+c_2\Labbc+c_3\Labcc+c_4\Labca+d_1\Labac+d_2\Labcb+e\Labcd.
\end{multline}
We omit the non-crossing pair partitions $\Laabb$ and~$\Labba$ since they are contained in every category.

Again, we want to exclude those parameters for which $\Kat:=\langle p\rangle_\delta$ is easy. Here, the situation is a~bit more complicated because we do not have a~criterion for easiness analogous to Lemma~\ref{L.3crit}. So, we divide the situation in different cases. In this section, we assume $\singleton\otimes\singleton\not\in\Kat$. We subdivide our computation even further:

\subsubsection{Generator not being rotationally symmetric}

First, let us briefly discuss the case, when $p$ is not rotationally symmetric. This means that 
\begin{multline*}
0\neq(R-1)p=:\tilde p=\tilde b_1\Laaab+\tilde b_2\Labbb+\tilde b_3\Labaa+\tilde b_4\Laaba+\cr
\tilde c_1\Laabc+\tilde c_2\Labbc+\tilde c_3\Labcc+\tilde c_4\Labca+d(\Labac-\Labcb),
\end{multline*}
where we denote $\tilde b_1=b_4-b_1$, $\tilde b_2=b_1-b_2$ and so on, so
\begin{align*}
\tilde b_1+\tilde b_2+\tilde b_3+\tilde b_4&=0,\\
\tilde c_1+\tilde c_2+\tilde c_3+\tilde c_4&=0.
\end{align*}

Denote by $\beta\colon\Partlin_\delta(2)\to\C$ the linear functional giving the coefficient of $\singleton\otimes\singleton$ for a~given linear combination $q\in\Partlin_\delta(2)$, i.e.\ mapping $\alpha\,\pairpart+\beta\,\singleton\otimes\singleton\mapsto\beta$. Since $\singleton\otimes\singleton\not\in\langle\tilde p\rangle_\delta$, we have four linear equations of the form $\beta(\Pi(R^i\tilde p))=0$, which read
\begin{align*}
\tilde b_1+\tilde b_4+\delta \tilde c_1+\tilde c_2+\tilde c_4&=0,\\
\tilde b_2+\tilde b_1+\delta \tilde c_2+\tilde c_3+\tilde c_1&=0,\\
\tilde b_3+\tilde b_2+\delta \tilde c_3+\tilde c_4+\tilde c_2&=0,\\
\tilde b_4+\tilde b_3+\delta \tilde c_4+\tilde c_1+\tilde c_3&=0.
\end{align*}
Together with the equations above, this leads to
$$\tilde c_3=-\tilde c_1,\quad \tilde c_4=-\tilde c_2,\quad \tilde b_2=-\tilde b_1-\delta \tilde c_2,\quad \tilde b_3=\tilde b_1+\delta(\tilde c_1+\tilde c_2),\quad \tilde b_4=-\tilde b_1-\delta \tilde c_1.$$

We can write $\tilde p=f(R)\Laabc+\tilde q$, where $f(x)=\tilde c_1+\tilde c_2x+\tilde c_3x^2+\tilde c_4x^3$. According to Proposition~\ref{P.gcd}, we can assume that $f$ is a~divisor of $x^4-1$. Thanks to the first two equations above, we see that $f(1)=0$ and $f(-1)=0$, so $f(x)$ is a~multiple of $x^2-1$. For $f(x)\neq 0$ (that is, either $f(x)=x^2-1$ or $f(x)=(x^2-1)\*(x\pm i)$), running one iteration of \textsc{AddTensors} shows that assuming $\delta\neq 2,4$ we have $\singleton\otimes\singleton\in\langle\tilde p\rangle_\delta$, which is a~contradiction.

In the case $f(x)=0$, we have
$$\tilde p=\tilde b(\Laaab-\Labbb+\Labaa-\Laaba)+\tilde d(\Labac-\Labcb).$$
One iteration of \textsc{AddTensors} yields $\tilde b=(-2\pm\sqrt{4-\delta})\tilde d$. Note that the involution acts on~$p$ by exchanging $\tilde b\mapsto -\bar{\tilde b}$ and $\tilde d\mapsto -\bar{\tilde d}$. Thus, both $\tilde b$ and~$\tilde d$ must be real up to scaling by a~complex number. This can be achieved only for $\delta\le 4$.

\begin{prop}
Consider $\delta\in\C\setminus(-\infty,4]$. Let $p\in\Partlin_\delta(4)$ such that $\singleton\otimes\singleton\not\in\Kat:=\langle p\rangle_\delta$ is non-easy. Then $p$ is rotationally symmetric.
\end{prop}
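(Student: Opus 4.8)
The plan is to argue by contraposition: I would assume $p$ is \emph{not} rotationally symmetric and show that this forces $\singleton\otimes\singleton\in\Kat$. Set $\tilde p:=(R-1)p$, which is then a nonzero element of $\Kat$; since it arises from $p$ by a rotation and a subtraction, we have $\langle\tilde p\rangle_\delta\subseteq\Kat$ and hence $\singleton\otimes\singleton\notin\langle\tilde p\rangle_\delta$ as well. This is exactly the hypothesis under which the computation preceding the statement was carried out, so I may reuse its conclusions: the four linear conditions $\beta(\Pi(R^i\tilde p))=0$ for $i=0,1,2,3$, together with the two vanishing-sum relations, express every coefficient of $\tilde p$ through $\tilde b_1,\tilde c_1,\tilde c_2$ (and $d$), and let me write $\tilde p=f(R)\Laabc+\tilde q$ with $f(x)=-(x^2-1)(\tilde c_1+\tilde c_2 x)$. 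By Proposition~\ref{P.gcd} I may assume $f\mid x^4-1$, and since $f(1)=f(-1)=0$ already, either $f=0$ or $f\in\{x^2-1,(x^2-1)(x+i),(x^2-1)(x-i)\}$.

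If $f\neq 0$, then one pass of \textsc{AddTensors} produces $\singleton\otimes\singleton$ in $\langle\tilde p\rangle_\delta$ as soon as $\delta\neq 2,4$. Since $\delta\in\C\setminus(-\infty,4]$ excludes both these values, this branch yields the desired contradiction and cannot occur.

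The substantive branch is $f=0$, where $\tilde p$ collapses to $\tilde p=\tilde b\,A+\tilde d\,B$ with $A:=\Laaab-\Labbb+\Labaa-\Laaba$ and $B:=\Labac-\Labcb$, and a single iteration of \textsc{AddTensors} forces the quadratic relation $\tilde b^2+4\tilde b\tilde d+\delta\tilde d^2=0$, i.e.\ $\tilde b=(-2\pm\sqrt{4-\delta})\,\tilde d$ (the degenerate case $\tilde d=0$ giving $\tilde b=0$ and hence $\tilde p=0$ at once). Here I would bring in that $\Kat$ is a $*$-category: reversing words shows $A^\star=-A$ and $B^\star=-B$, so $\tilde p^\star=-\bar{\tilde b}\,A-\bar{\tilde d}\,B$ lies in $\Kat$ as well. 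If $\tilde b/\tilde d$ were \emph{not} real, then $\tilde p$ and $\tilde p^\star$ would be linearly independent, so the $A$-to-$B$ coefficient vectors of the combinations $s\tilde p+t\tilde p^\star\in\Kat$ would range over all of $\C^2$; choosing that ratio outside the two-element set $\{-2\pm\sqrt{4-\delta}\}$ would, by the same \textsc{AddTensors} relation, put $\singleton\otimes\singleton$ into $\Kat$. Thus $\tilde b/\tilde d$ must be real. But for $\delta\in\C\setminus(-\infty,4]$ the number $4-\delta$ is never a nonnegative real, so $\sqrt{4-\delta}$ and both roots $-2\pm\sqrt{4-\delta}$ are non-real, contradicting the reality of $\tilde b/\tilde d$.

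I expect this last reality-versus-proportionality step to be the crux, since it is the only place where the precise excluded interval $(-\infty,4]$ enters the argument; everything else is bookkeeping with the linear equations and the two \textsc{AddTensors} computations, which could in principle be reproduced by hand in the style of Section~\ref{secc.len3}. Taken together, the two branches show $\tilde p=0$, i.e.\ $p$ is rotationally symmetric.
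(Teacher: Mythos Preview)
Your argument is correct and follows the same route as the paper's: both reduce to $\tilde p=(R-1)p$, impose the linear constraints from $\beta(\Pi R^i\tilde p)=0$, invoke Proposition~\ref{P.gcd} to restrict $f$, dispose of the $f\neq 0$ branch via one \textsc{AddTensors} pass (using $\delta\neq 2,4$), and in the $f=0$ branch derive the quadratic $\tilde b=(-2\pm\sqrt{4-\delta})\tilde d$ and conclude via reflection that $\tilde b/\tilde d$ must be real, which fails for $\delta\notin(-\infty,4]$. The only difference is that you spell out explicitly why reflection forces reality (linear independence of $\tilde p$ and $\tilde p^\star$ would produce combinations violating the quadratic and hence $\singleton\otimes\singleton\in\Kat$), whereas the paper states this step tersely.
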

\begin{proof}
Follows from the considerations above.\footnote{Details of the computation at \url{https://nbviewer.jupyter.org/github/gromadan/partcat/blob/master/lincat/Section_4_3_1.ipynb}}
\end{proof}

\subsubsection{Rotationally symmetric generator}

Now, suppose $p$ is of the form
\begin{multline*}
p=a_1\Laaaa+a_2\Labab+b(\Laaab+\Labbb+\Labaa+\Laaba)+\\
c(\Laabc+\Labbc+\Labcc+\Labca)+d(\Labac+\Labcb)+e\Labcd.
\end{multline*}

Recall the notation $\beta\colon\Partlin_\delta(2)\to\C$ for the linear functional giving the coefficient of $\singleton\otimes\singleton$. As $\singleton\otimes\singleton\not\in\Kat$, we must have $\beta(q)=0$ for all $q\in\Kat(2)$. So, our idea for computing concrete coefficients providing a~candidate for a~non-easy category is to solve the following equations.
\begin{align}
\label{eq.phi1}\beta(\Pi p)&=0\\
\label{eq.phi2}\beta\left(\ethine(p\otimes p)\right)&=0\\
\label{eq.phi3}\beta\left(\cyclopropadiene(p\otimes p\otimes p)\right)&=0\\
\label{eq.phi4}\beta\left(\cyclobutatriene(p\otimes p\otimes p\otimes p)\right)&=0\\
\label{eq.phi5}\beta\left(\cyclopentaquartene(p\otimes p\otimes p\otimes p\otimes p)\right)&=0
\end{align}

\begin{rem}
\leavevmode
\begin{enumerate}
\renewcommand{\theenumi}{\alph{enumi}}
\item All the equations are homogeneous. (Their solution is obviously invariant with respect to scaling.)
\item The first equation containing one copy of~$p$ is linear, the second one is quadratic and so on.
\item The rotational symmetry reduces the number of variables and equations. Note for example that there is essentially just one way how to construct a~tensor product of two copies of~$p$ and then contract it to size two. Similarly for three copies of~$p$. For four copies, there are two additional ways, but it turns out that the corresponding equations already follow from Eqs. \eqref{eq.phi1}--\eqref{eq.phi4}.
\item The reflection acts on~$p$ by complex conjugating all the parameters. If it turns out that the system of equations has discrete solutions only (up to scaling), then the assumption of non-easiness implies that all the coefficients are up to scaling real. (Otherwise $p$ and~$p^\star$ are linearly independent, so $p+\alpha p^\star\in\langle p\rangle$ would be a~one-parameter set of solutions.)
\end{enumerate}
\end{rem}

We were not able to solve those equations in full generality. So, let us focus on some special cases.

\subsubsection{Special case: $a_2=0$, i.e.\ $p$ is non-crossing}

In this case, unless $b=c=d=e=0$, we have that $\singleton\otimes\singleton\not\in\langle p\rangle_\delta$ already implies that $\langle p\rangle_\delta$ is non-easy. Since we have only five variables, four homogeneous equations \eqref{eq.phi1}--\eqref{eq.phi4} are already enough to obtain a~list of discrete solutions (up to scaling). Using Maple, we found the following eight solutions:
\begin{equation}\label{eq.C2asol1}
a_1=1,\quad b=0,\quad c=0,\quad d=0,\quad e=0,
\end{equation}
\begin{equation}\label{eq.C2asol2}
a_1=\delta^3,\quad b=-2\delta^2,\quad c=4\delta,\quad d=4\delta,\quad e=-16,
\end{equation}
\begin{equation}\label{eq.C2asol3}
\begin{split}
a_1=\delta(\delta+1)(\delta+2\mp 2\sqrt{\delta+1}),\quad b&=\delta(-\delta-1\pm\sqrt{\delta+1}),\\
c=\delta,\quad d=\delta,\quad e&=\delta-2\mp2\sqrt{\delta+1},
\end{split}
\end{equation}
\begin{equation}\label{eq.C2asol4}
\begin{split}
a_1=2\delta^2(2\pm\sqrt{4-\delta}),\quad b=-\delta^2,\quad c=2\delta,\\
d=\mp\delta\sqrt{4-\delta},\quad e=2(-2\pm\sqrt{4-\delta}),
\end{split}
\end{equation}
\begin{equation}\label{eq.C2asol5}
\begin{split}
a_1=2\delta^3(3\mp 2\sqrt{3-\delta}),\quad b=\delta^2(-2\delta\pm\sqrt{3-\delta}),\quad c=\delta(4\delta-3),\\
d=\delta(\delta\pm 2(\delta-1)\sqrt{3-\delta}),\quad e=\pm2(3\delta-2)\sqrt{3-\delta}+7\delta-6.
\end{split}
\end{equation}

There are also some additional solutions for $\delta=-1$,~3,~4,~$(3\pm\sqrt{5})/2$, which we will not mention here. The first solution of the list above is the easy one. The following two lines~-- \eqref{eq.C2asol2} and~\eqref{eq.C2asol3} -- are interesting for us. Their non-easiness will be proven as Proposition~\ref{P.Tcnoneasy} and Proposition~\ref{P.Vcnoneasy}, respectively. Solutions \eqref{eq.C2asol4} and~\eqref{eq.C2asol5} are real only for $\delta\le 4$, resp. $\delta\le 3$, so we will ignore them here.

We can summarize the results in the following proposition.

\begin{cand}
\label{Cand2}
Consider $\delta\in\C\setminus(-\infty,4]$. Let $p\in\Partlin_\delta(4)$ be non-crossing such that $\Kat:=\langle p\rangle_\delta$ is non-easy and $\singleton\otimes\singleton\not\in\Kat$. Then $\Kat$ is equal to one of the following three categories
$$\langle\delta^3\Laaaa-2\delta^2(\Laaab+\Laaba+\dots)+4\delta(\Laabc+\Labac+\dots)-16\Labcd\rangle_\delta,$$
\begin{multline*}
\langle\delta^3(\delta+1)\Laaaa-\delta^2(\delta+1\pm\sqrt{\delta+1})(\Laaab+\Laaba+\dots)+\cr\delta(\delta+2\pm2\sqrt{\delta+1})(\Laabc+\Labac+\dots)+(\delta^2-4\delta-8\mp8\sqrt{\delta+1})\Labcd\rangle_\delta.
\end{multline*}
\end{cand}

Note that the two categories on the second line can be easy only for $\delta\in[-1,\infty)$ since otherwise the generator does not have real coefficients.

\subsubsection{Special case: $c=0\neq a_2$}

We again use Maple to obtain the solutions. One of the solutions is a~very complicated one that can be expressed in terms of roots of some polynomial equation of degree nine. We will not study it further. Then we have a~solution of the form
\begin{equation}
a_1=0,\quad a_2=\delta^2,\quad b=0,\quad d=-2\delta,\quad e=4.
\end{equation}

Finally, there is a~solution where $a_1$ and~$a_2$ are arbitrary and $b=d=e=0$. This solution is somehow obvious~-- the category $\langle a_1\fourpart+a_2\Labab\rangle_\delta$ can never contain $\singleton\otimes\singleton$ since all blocks of both $\Laaaa$ and~$\Labab$ have even size. This, however, says nothing about its non-easiness, so let us use our algorithm to investigate the category.

For simplicity, we can divide the generator by~$a_2$ (for $a_2=0$ is the category obviously easy), that is, consider $p:=\Labab+a\,\fourpart$. After one iteration of \textsc{AddTensors}, we see that $\langle p\rangle_\delta$ may be non-easy only if $a=-2$.\footnote{Details of the computation at \url{https://nbviewer.jupyter.org/github/gromadan/partcat/blob/master/lincat/Section_4_3_4.ipynb}}

\begin{cand}
\label{Cand3}
We have two new candidates for non-easy categories
$$\langle\delta^2\Labab-2\delta(\Labac+\Labcb)+4\Labcd\rangle_\delta,\quad\hbox{and}\quad\langle \Labab-2\Laaaa\rangle_\delta.$$
\end{cand}

The non-easiness of both is proven by Proposition \ref{P.disjoin} and~\ref{P.join}, respectively. Moreover, we will prove that both are actually isomorphic to the category of all pairings $\Pair_\delta=\langle\crosspart\rangle_\delta$.

\subsection{Generator of length four, case with singletons}

In this subsection, we assume $\singleton\otimes\singleton\in\Kat$, so we can assume $p$ is of the form
\begin{equation}
\label{eq.p2}
p=a_1\Laaaa+a_2\Labab+b_1\Laaab+b_2\Labbb+b_3\Labaa+b_4\Laaba+d_1\Labac+d_2\Labcb.
\end{equation}
We do not include $\Labcd$ and rotations of~$\Laabc$ in the linear combination since those are generated by $\singleton\otimes\singleton$.

\begin{prop}
Consider $\delta\in\C\setminus\{0,2\}$. Let $p$ be of the form~\eqref{eq.p2}. Suppose $\Kat:=\langle \singleton\otimes\singleton, p\rangle_\delta$ is non-easy and $\Labac\not\in\Kat$. Then $p$ is rotationally symmetric.
\end{prop}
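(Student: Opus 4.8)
The plan is to reduce the statement to showing that the ``antisymmetric part'' $\tilde p:=(R-1)p$ vanishes, since $p$ is rotationally symmetric exactly when $Rp=p$. First I would expand $\tilde p$ in the basis of partitions. Since $R$ fixes $\Laaaa$ and $\Labab$, cyclically permutes the four ``triple'' partitions $\Laaab\mapsto\Labbb\mapsto\Labaa\mapsto\Laaba\mapsto\Laaab$, and swaps $\Labac\leftrightarrow\Labcb$, one obtains
\begin{equation*}
\tilde p=\tilde b_1\Laaab+\tilde b_2\Labbb+\tilde b_3\Labaa+\tilde b_4\Laaba+\tilde d\,(\Labcb-\Labac),
\end{equation*}
with $\tilde b_1+\tilde b_2+\tilde b_3+\tilde b_4=0$ and $\tilde d=d_1-d_2$. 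As $\Kat$ is closed under rotation, $\tilde p\in\Kat$, and rotational symmetry of $p$ is equivalent to $\tilde b_1=\dots=\tilde b_4=0$ and $\tilde d=0$. The key structural point, parallel to the preceding analysis of the non-symmetric case, is that the six partitions occurring in $\tilde p$ -- the four triples and the two crossing pair-with-singletons -- are precisely the length-four partitions (other than $\Laaaa$ and $\Labab$) that do \emph{not} lie in $\langle\singleton\otimes\singleton\rangle$; recall that $\langle\singleton\otimes\singleton\rangle\subset\Kat$ already contains $\Labcd$ and all rotations of $\Laabc$.

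Next I would note that contracting $\tilde p$ or any of its rotations down to length two gives no information, because $\Kat(0,2)=\spanlin\{\pairpart,\singleton\otimes\singleton\}$ is all of $\Partlin_\delta(0,2)$: the forbidden partition $\Labac$ lives at length four and can only be reached by first tensoring up. Hence I would run one iteration of \textsc{AddTensors}: form the tensor products of $\tilde p$ and of $p$ itself with $\pairpart$, with $\singleton\otimes\singleton$, and with $\tilde p$, and contract each result back down to length four. Every such element lies in $\Kat$, and, working modulo the easy subcategory $\langle\singleton\otimes\singleton\rangle$, it is a combination of the six exotic partitions above. Using Proposition~\ref{P.gcd} to normalise the triple-part $\tilde b_1\Laaab+\dots+\tilde b_4\Laaba=g(R)\Laaab$ (with $g(1)=0$), I would then impose the hypothesis $\Labac\notin\Kat$: the span of all elements produced this way, read modulo $\langle\singleton\otimes\singleton\rangle$, must avoid the classes of $\Labac$ and $\Labcb$. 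This containment condition translates into a system of polynomial equations in $\tilde b_1,\dots,\tilde b_4,\tilde d$ over $\C[\delta]$.

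Solving this system is the computational heart of the argument, carried out symbolically as in the preceding proofs. The goal is to show that, once $\delta\neq0,2$, its only solution compatible with both hypotheses is $\tilde p=0$: the cross-contractions first force $\tilde d=0$, and for the remaining purely ``triple'' element $g(R)\Laaab\in\Kat$ a nonzero value of $g$ that failed to yield $\Labac$ would place all exotic summands of $p$ inside $\Kat$, making $\Kat$ easy by Remark~\ref{R.easy} and Proposition~\ref{P.summands} and contradicting non-easiness. I expect the main obstacle to be exactly this triple-part: the triples carry a block of size three and therefore do not by themselves generate the size-two-block partition $\Labac$, so their rotational symmetry cannot be forced by ``directly producing $\Labac$''. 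It has to be extracted instead from the mixed terms between the triples and the crossing pairs in the tensor products, and it is here that both the non-easiness hypothesis and the exclusion of the degenerate values $\delta=0$ and $\delta=2$ -- at which the operation matrices drop rank and spurious non-symmetric solutions appear -- become indispensable.
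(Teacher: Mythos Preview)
Your overall strategy---form $\tilde p=(R-1)p$, run one \textsc{AddTensors} step, and derive a contradiction with the hypotheses---is exactly the paper's approach. The paper likewise assumes $\tilde p\neq 0$ and computes one iteration of \textsc{AddTensors} on $\langle\singleton\otimes\singleton,\tilde p\rangle_\delta$.

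Where your sketch diverges is in the expected shape of the computation and the final logical step. The paper's computation shows that for \emph{any} nonzero $\tilde p$ of the given form (triples plus the $d$-part), one \textsc{AddTensors} iteration already produces $\Labac\in\langle\singleton\otimes\singleton,\tilde p\rangle_\delta$ whenever $\delta\neq 2$; this immediately contradicts $\Labac\notin\Kat$ and finishes the proof. The paper then goes further: absorbing the $d$-part into $\Labac$ and rerunning yields $\Laaab$ (here $\delta\neq 0$ enters), so in fact $\langle\tilde p,\singleton\otimes\singleton\rangle_\delta=\langle\Laaab,\Labac\rangle_\delta$, which also violates non-easiness. Both hypotheses are contradicted simultaneously.

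Your prediction that the pure triple part ``does not by itself generate $\Labac$'' and must be killed via ``mixed terms between the triples and the crossing pairs'' is therefore incorrect: the triples together with $\singleton\otimes\singleton$ already generate $\Labac$. More seriously, your fallback argument for the case ``$g\neq 0$ but $\Labac$ is not produced'' has a gap: you assert this would ``place all exotic summands of $p$ inside $\Kat$, making $\Kat$ easy'', but $\Labac$ and $\Labcb$ are themselves summands of $p$ (when $d_1,d_2\neq 0$), and you are assuming $\Labac\notin\Kat$, so Proposition~\ref{P.summands} cannot be invoked. Fortunately this case turns out to be vacuous, so if you actually carry out the symbolic computation you will reach the correct conclusion---but the roadmap you wrote down for it would not stand on its own.
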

\begin{proof}\footnote{Details of the computation at \url{https://nbviewer.jupyter.org/github/gromadan/partcat/blob/master/lincat/Prop_4_12.ipynb}}
Assume
$$0\neq(R-1)p=:\tilde p=\tilde b_1\Laaab+\tilde b_2\Labbb+\tilde b_3\Labaa-(\tilde b_1+\tilde b_2+\tilde b_3)\Laaba+d(\Labac-\Labcb).$$
We will prove that $\langle\tilde p,\singleton\otimes\singleton\rangle_\delta=\langle\Laaab,\Labac\rangle_\delta$ (which contains all partitions on four points except for~$\Labab$). This already implies that $\langle p,\singleton\otimes\singleton\rangle_\delta$ either equals to $\langle\Laaab,\Labac\rangle_\delta$ or to $\langle\Laaab,\Labac,\Labab\rangle_\delta$, so it is easy.

After one iteration of \textsc{AddTensor} on $\langle\singleton\otimes\singleton,\tilde p\rangle_\delta$, we see that $\Labac\in\langle\singleton\otimes\singleton,\tilde p\rangle_\delta$ assuming $\delta\neq 2$. Hence, we can set $d=0$ and repeat the algorithm for $\langle\tilde p,\Labac\rangle_\delta$. After one iteration of \textsc{AddTensor}, we generate~$\Laaab$ assuming $\delta\neq 0$.
\end{proof}

\subsubsection{Assuming $\protect\Labac\not\in\Kat$}

Take
\begin{equation}
\label{eq.p22}
p=a_1\Laaaa+a_2\Labab+b(\Laaab+\Labbb+\Labaa+\Laaba)+d(\Labac+\Labcb).
\end{equation}
Running one iteration of \textsc{AddTensor} on $\langle\singleton\otimes\singleton, p\rangle_\delta$, we compute $a_1=-b\delta$, $a_2=-b-d\delta$. Further iterations of \textsc{AddTensor} suggest that this category indeed does not contain~$\Labac$ and is indeed non-easy for any $b,d\in\C$.\footnote{Details of the computation at \url{https://nbviewer.jupyter.org/github/gromadan/partcat/blob/master/lincat/Section_4_4_1.ipynb}}

We can write $p=a_1p_1+a_2p_2$, where (assuming $\delta\neq 0$)
\begin{align}
p_1&=\Laaaa-{1\over\delta}(\Laaab+\Labbb+\Labaa+\Laaba)+{1\over\delta^2}(\Labac+\Labcb),\label{eq.p221}\\
p_2&=\Labab-{1\over\delta}(\Labac+\Labcb).\label{eq.p222}
\end{align}
In Propositions~\ref{P.Pcnoneasy} and~\ref{P.Pcnoneasy2}, we will show that the categories $\langle p_1\rangle_\delta$, $\langle p_2\rangle_\delta$ and $\langle p_1,p_2\rangle_\delta$ are indeed noneasy (note that $p_1$ essentially coincides with $\Pc_{(\delta)}\Laaaa$ and $p_2$~essentially coincides with $\Pc_{(\delta)}\Labab$, where $\Pc_{(\delta)}$ will be defined in Def.~\ref{D.Pc}).

\begin{rem}
\label{R.p2}
It actually holds that $\langle p\rangle_\delta=\langle p_1,p_2\rangle_\delta$ for any non-trivial linear combination $p=a_1p_1+a_2p_2$. For most choices of $a_1$,~$a_2$, this can be computed with our algorithm. However, choosing
\begin{align*}
p&=2\delta p_1+(2-\delta)p_2\\&=2\delta\Laaaa+(2-\delta)\Labab-2(\Laaab+\Labbb+\Labaa+\Laaba)+\Labac+\Labcb,
\end{align*}
the category $\langle p\rangle_\delta$ appears to be new. That is, even if we iterate \textsc{AddTensor} until the modules become stable, we do not obtain $p_1$ and~$p_2$. As we mentioned at the beginning of this remark, in reality, the $p_1$ and $p_2$ are elements of the category. We can compute it ``by hand'' (preferably, again with the help of computer), if we do the computation described by the following graph.\footnote{Details of the computation at \url{https://nbviewer.jupyter.org/github/gromadan/partcat/blob/master/plin/Remark_4_13.ipynb}}
$$
\begin{tikzpicture}
\draw (-1.5,0) -- (1.5,0);
\draw (0,-1.5) -- (0,1.5);
\draw (-1,0) -- (0,-1) -- (1,0) -- (0,1) -- cycle;
\filldraw [fill=white]
   (0,0) circle (0.1)
   (0,1) circle (0.1)
   (0,-1) circle (0.1)
   (-1,0) circle (0.1)
   (1,0) circle (0.1);
\end{tikzpicture}
$$
Here the vertices stand for copies of the generator~$p$~-- each vertex has degree four as $p$ is a~linear combination of partitions of four points~-- and the edges describe contractions (free edges connected just to one vertex are the outputs). One can check that it is indeed possible to perform this computation using the category operations. The key point is that this graph is planar.

So, if we do such a~computation, the result is
\begin{multline*}
(2\delta^5-18\delta^4+48\delta^3-48\delta^2+96\delta-64)\delta p_1\\-(\delta^6-11\delta^5+50\delta^4-144\delta^3+304\delta^2-320\delta+64)p_2+\dots,
\end{multline*}
where the dots stand for some partitions that are already generated by $\singleton\otimes\singleton$. For $\delta\neq 0,2,3,4$, this is a~different linear combination than we started with, so we can indeed generate $p_1$ and~$p_2$.
\end{rem}

\subsubsection{Assuming $\protect\Labac\in\Kat$}

In this case, we are interested in categories of the form $\Kat:=\langle \Labac,p\rangle_\delta$, where
$$p=a_1\Laaaa+a_2\Labab+b(\Laaab+\Labbb+\Labaa+\Laaba).$$
Using our algorithm, it can be again proven that non-easiness implies $a_1=-b\delta$. So, our candidates are quantum groups of the form $\langle a_1p_1+a_2p_2,\Labac\rangle_\delta=\langle a_1p_1+a_2\Labab,\Labac\rangle_\delta$, where $p_1$ and~$p_2$ are given by Eqs.~\eqref{eq.p221},~\eqref{eq.p222} (this time, we can also ignore the summands $\Labac$,~$\Labcb$ in the formulae \eqref{eq.p221},~\eqref{eq.p222}).\footnote{Details of the computation at \url{https://nbviewer.jupyter.org/github/gromadan/partcat/blob/master/lincat/Section_4_4_2.ipynb}}

Again, our algorithm shows, that actually $\langle a_1p_1+a_2\Labab,\ppen\Labac\rangle_\delta=\langle p_1,\Labab,\ppen\Labac\rangle_\delta$ for most choices of $a_1,a_2$. From Remark~\ref{R.p2}, it actually follows that we have this for all $a_1,a_2\neq 0$ if we assume $\delta\neq 0,\ppen 2,3,4$.

Finally, let us mention that we can, in addition, construct the non-easy categories of the form $\langle \singleton,p\rangle$. Again, see Proposition~\ref{P.Pcnoneasy2}.

\begin{cand}
\label{Cand4}
Consider the following candidates for non-easy categories.
\begin{align}
\label{eq.Ksing}&\langle p_1,\singleton\otimes\singleton\rangle_\delta &\qquad&\langle p_1,p_2,\singleton\otimes\singleton\rangle_\delta\\
\label{eq.Kabac}&\langle p_1,\Labac\rangle_\delta &\qquad&\langle p_1,\Labac,\Labab\rangle_\delta\\
&\langle p_1,\singleton\rangle_\delta\hfill &\qquad&\langle p_1,\singleton,\Labab\rangle_\delta
\end{align}
Here, $p_1,p_2$ are given by Eqs.~\eqref{eq.p221},~\eqref{eq.p222}. Assuming $\delta\neq 0,\ppen 2,3,4$, those on lines \eqref{eq.Ksing},~\eqref{eq.Kabac} are the only non-easy categories containing $\singleton\otimes\singleton$ generated by a~single element of $\Partlin_\delta(4)$.
\end{cand}

\section{Concluding remarks on the use of our algorithm}

Let us highlight the contribution of the presented computations to the research in compact quantum groups and suggest some directions for further research.

We were able to find several new examples of partition categories. Most of were interpreted within the theory of compact quantum groups in a separate article \cite{GW18}. Some of the categories that are left over, namely Candidates~\ref{Cand3} are interpreted in Section~\ref{sec.twists} on anticommutative twists. In addition, all the candidates are proven non-easy without any reference to the theory of quantum groups in Section~\ref{sec.noneasydirect}.

As for the size of the considered partitions, the computations presented in Section~\ref{sec.results} are almost at the limit of what can be achieved using our naive algorithm. Due to exponentially increasing requierements for memory and time, we cannot increase the value of the length bound $l_0$ too much. In Remark~\ref{R.p2}, we have seen that even if we choose the length bound to be twice the size of our generator, it may happen that the category approximation is not precise enough. In this case, two categories were incorrectly determined to be distinct although they were not. Nonetheless, we believe that computer algebra might still be useful for seeking new categories of partitions if we make some further assumptions on our categories.

Note for example that all the interesting categories we constructed here are generated by a rotationally-symmetric linear combination of partitions. When looking for other examples of non-easy categories, it may be convenient to focus on rotationally-symmetric generators.

Secondly, we believe that computer algebra might be useful to attack some concrete hypotheses such as the following. (See \cite{BBCC13,Ban18}.)
\begin{enumerate}
\renewcommand{\theenumi}{\alph{enumi}}
\item Is there a quantum group $G$ such that $S_N\subsetneq G\subsetneq S_N^+$? Equivalently, is there a category $\Kat$ such that $\Partlin_N\supsetneq\Kat\supsetneq\NCPart_N$?
\item Is there a quantum group $G$ such that $O_N^*\subsetneq G\subsetneq O_N^+$? Equivalently, is there a category $\Kat$ such that $\langle\halflibpart\rangle_N\supsetneq\Kat\supsetneq\NCPair_N$?
\end{enumerate}

\section{Direct proofs of non-easiness}
\label{sec.noneasydirect}

In this section, we provide proofs of non-easiness of the categories discovered by computer experiments as described in the previous section. The fact that the discovered categories are new and hence non-easy can be proven by interpreting them in terms of quantum group. This was partially done in \cite{GW18}; some additional instances are also interpreted in this article in Section~\ref{sec.twists}. Nevertheless, we also decided to formulate direct proofs here without reference to the quantum group theory. We formulate this section not only to really prove the statements, but also to show different kinds of proof techniques connected with non-easy quantum groups and to show interesting isomorphisms between different linear categories of partitions.

\subsection{General contractions}
\label{secc.gencont}

The following proof works only for one specific category, whose non-easiness is possible to proof also by other means (see Sect.~\ref{secc.P}). Nevertheless, we consider the proof technique to be quite interesting, so we decided to include it here. The basic idea of proof is the following. Suppose $p$ is reflection symmetric. If $p\in\Partlin_\delta(l)$ generates $p'\in\Partlin_\delta(l')$, this means that $p'$ was made from~$p$ by a~series of tensor products, contractions and rotations. We can simplify this process a~bit. First, we produce a~$k$-fold tensor product $p^{\otimes k}$ and then perform some more general contractions. Namely, we can express $p'=qp^{\otimes k}$, where $q\in\Pair_\delta(l'k,l)$ is some pairing. In fact, we can generate any element of $\langle p,\crosspart\rangle_\delta$ by such a~process.

\begin{prop}
\label{P.contractions}
The category
$$\langle \delta^2\,\Laaa-\delta(\Labb+\Laab+\Laba)+2\Labc\rangle_\delta$$
is non-easy for every $\delta\in\C$.
\end{prop}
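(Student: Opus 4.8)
The plan is to show that the category generated by $p := \delta^2\,\Laaa-\delta(\Labb+\Laab+\Laba)+2\Labc$ does \emph{not} contain the singleton $\singleton$, and then invoke Lemma~\ref{L.3crit}, which tells us that for a generator of length three, non-easiness is equivalent to $\singleton \notin \Kat$ (equivalently, $\Kat(1)$ being empty). So the entire task reduces to proving $\Kat(1) = 0$.

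To control $\Kat(1)$, I would use the ``general contractions'' reformulation sketched in the paragraph preceding the statement. First I would verify that $p$ is reflection-symmetric (indeed $p^\star = p$ since the coefficient pattern is symmetric under reversing the three points), so that Proposition~\ref{P.refl} applies and every element of $\langle p, \crosspart\rangle_\delta$ arises from $p^{\otimes k}$ by applying a single pairing $q \in \Pair_\delta(l'k, l)$ followed by rotations. Taking $l' = 1$, the claim becomes: for every $k$ and every pairing $q \in \Pair_\delta(1, 3k)$ (after suitable rotation, a pairing on $3k$ lower points with one output), the contraction $q\,p^{\otimes k}$ is a scalar multiple of $\singleton$ whose scalar I must show vanishes. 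The key structural observation is that $p$ is designed so that its ``partial trace'' contractions collapse: the linear equations $\Pi R^i p = 0$ for $i = 0,1,2$ are exactly what forces the singleton coefficient of any single contraction of $p$ to be zero (this is the linear part of the defining system noted in the remark after Candidate~\ref{Cand1}).

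The heart of the argument is therefore an inductive/combinatorial bookkeeping claim: when we connect $k$ copies of $p$ by a pairing and read off the coefficient of $\singleton$ in the result, that coefficient is a sum over ways of pairing up the $3k$ points, and I would argue that each such contribution either produces a closed loop (yielding a factor $\delta$ and reducing to a configuration with fewer free legs) or feeds into one of the vanishing relations $\Pi R^i p = 0$. The cleanest route is to set up a functional $\beta_1 \colon \Partlin_\delta(1) \to \C$ extracting the $\singleton$-coefficient and show $\beta_1(q\,p^{\otimes k}) = 0$ for all admissible $q$ by induction on $k$: for $k=1$ this is precisely the three linear relations; for the inductive step, any pairing on $3k$ points must contain at least one pair joining two points \emph{within a single copy} of $p$ or one closed loop, and contracting there first reduces a copy of $p$ via $\Pi R^i$ (killing it by the base relation when it produces a singleton) or reduces $k$.

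The main obstacle I anticipate is controlling the bookkeeping when a contraction connects points belonging to \emph{different} copies of $p$ without immediately triggering a vanishing relation — i.e. showing that the relevant invariant really is preserved through arbitrary planar and non-planar pairings, and in particular handling the $2\Labc$ (fully-crossing) term, whose contractions are the most delicate. I would try to package all of this by observing that the three linear relations together say $\sum_i \alpha_i (\text{row-sum of } p_i) = 0$ in a strong sense, and that a single general contraction of $p^{\otimes k}$ can always be decomposed so that \emph{some} individual copy of $p$ is hit by a rank-one contraction of the form $\Pi R^i$; the genuinely technical verification is that no pairing can simultaneously avoid producing such a copy-local or loop-reducing contraction, which is where a careful case analysis (or an explicit computation with the $5 \times 5$ structure constants of $p$) will be needed.
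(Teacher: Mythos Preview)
Your reduction via Lemma~\ref{L.3crit} is correct, and the ``general contractions'' reformulation is the right starting point. However, your proposed induction on~$k$ has a genuine gap. The claim that ``any pairing on $3k$ points must contain at least one pair joining two points within a single copy of~$p$ or one closed loop'' is simply false: already for $k=2$ one can pair each of the three points of the first copy with a point of the second copy, with no intra-copy pair and no loop. So the inductive step as you describe it does not go through, and the ``main obstacle'' you flag is not a bookkeeping issue but the actual content of the argument. Your fallback (``some individual copy of~$p$ is hit by a rank-one contraction of the form~$\Pi R^i$'') fails for the same reason.

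The paper's proof sidesteps induction entirely with one absorption trick. Suppose $(\idpart\otimes q')p^{\otimes k}=\alpha\,\singleton$ for some pairing $q'\in\Pair_\delta(3k-1,0)$. Set
\[
q:=q'(\idpart\otimes\idpart\otimes p^{\otimes(k-1)})\in\Partlin_\delta(2,0),
\]
so that $(\idpart\otimes q)p=\alpha\,\singleton$. The point is that $q$ is no longer a pairing but merely an element of the \emph{two-dimensional} space $\Partlin_\delta(2,0)=\spanlin\{\uppairpart,\,\upsingleton\otimes\upsingleton\}$. Hence it suffices to check $(\idpart\otimes\uppairpart)p=0$ (this is one of your $\Pi R^i p=0$) and $(\idpart\otimes\upsingleton\otimes\upsingleton)p=0$. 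The second verification is a one-line computation you did not mention; it is needed precisely because absorbing copies of~$p$ into~$q'$ destroys the pair-partition structure.
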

\begin{proof}
Denote $p:=\delta^2\,\Laaa-\delta(\Labb+\Laab+\Laba)+2\Labc$, $\Kat:=\langle p\rangle_\delta$. We showed in Lemma~\ref{L.3crit} that $\Kat$ is non-easy if and only if $\singleton\not\in\langle p\rangle_\delta$. If we had $\singleton\in\Kat$, it would mean that by a~series of tensor products, contractions and rotations, we can produce a~non-zero multiple of~$\singleton$ from~$p$. Thanks to $p$ being rotationally invariant, this would imply that there exists $k\in\N$ and $q'\in\Pair_\delta(3k-1,0)$ such that $(\idpart\otimes q')p^{\otimes k}=\alpha\singleton$ for some $\alpha\neq 0$. Consequently, $(\idpart\otimes q)p=\alpha\singleton$ for $q:=q'(\idpart\otimes\idpart\otimes p^{\otimes(k-1)})\in\Partlin_\delta(2,0)$.

Hence, it is enough to prove that $qp=0$ for every $q\in\Partlin_\delta(2,0)$. In Section~\ref{secc.len3}, we checked that $(\idpart\otimes\uppairpart)p=\Pi_2 p=0$. It is straightforward to check that also $(\idpart\otimes\upsingleton\otimes\upsingleton)p=0$.
\end{proof}

Actually, this also proves non-easiness of the category $\langle p,\crosspart\rangle_\delta$.

We could formulate a~more general statement such as: Let $p\in\Partlin_\delta(l)$ be rotation and reflection symmetric with $l$ odd. Suppose $(\idpart\otimes q)p=0$ for every $q\in\Partlin_\delta(l-1,0)$. Then $\langle p\rangle_\delta$ and $\langle p,\crosspart\rangle_\delta$ are non-easy categories.

This might sound like a~promising way of constructing new non-easy categories. We only have to solve some system of linear equations. For dimension reasons, we actually surely will have a~plenty of solutions. However, it might happen that all the discovered non-easy categories are actually equal to the above mentioned one. This is at least the case for $l=5$.

\subsection{Isomorphism by conjugation}
\label{secc.tau}

This subsection basically follows \cite[Section~7]{GW18}. We assume $\delta\neq 0$ here.

\begin{defn}
We define the linear combination $\tau_{(\delta)}:=\idpart-{2\over\delta}\disconnecterpart\in\Partlin_\delta(1,1)$. For any $p\in\Partlin_\delta(k,l)$, we set $\Tc_{(\delta)}p:=\tau_{(\delta)}^{\otimes l}p\tau_{(\delta)}^{\otimes k}$.
\end{defn}

It holds that $\tau_{(\delta)}\cdot\tau_{(\delta)}=\idpart$ and $\tau_{(\delta)}^*=\tau_{(\delta)}$. In operator language, we would say that $\tau_{(\delta)}$ is a~self-adjoint unitary. Consequently, conjugation by~$\tau_{(\delta)}$ defines a~category isomorphism.

\begin{prop}
$\Tc_{(\delta)}$ is a~monoidal $*$\hbox{-}isomorphism $\Partlin_\delta\to\Partlin_\delta$.
\end{prop}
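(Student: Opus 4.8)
The plan is to check directly the four conditions that make $\Tc_{(\delta)}$ a monoidal $*$-isomorphism of $\Partlin_\delta$ with itself: that it is a linear functor (preserving composition and identities, in particular the tensor unit), that it is monoidal (preserving tensor products), that it intertwines the involution, and that it is invertible. Since $\Tc_{(\delta)}$ is by construction linear and maps each morphism space $\Partlin_\delta(k,l)$ back to itself, everything will reduce to the two properties stated just above the proposition, namely that $\tau_{(\delta)}$ is a \emph{self-adjoint unitary} ($\tau_{(\delta)}\tau_{(\delta)}=\idpart$ and $\tau_{(\delta)}^*=\tau_{(\delta)}$), together with the interchange law of the monoidal $*$-category and the standard identities $(qp)^*=p^*q^*$ and $(p\otimes q)^*=p^*\otimes q^*$.

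First I would record three elementary consequences. Writing $\id_k:=\idpart^{\otimes k}$, the interchange law together with $\tau_{(\delta)}\tau_{(\delta)}=\idpart$ gives $\tau_{(\delta)}^{\otimes k}\tau_{(\delta)}^{\otimes k}=(\tau_{(\delta)}\tau_{(\delta)})^{\otimes k}=\id_k$; the definition of tensor product gives $\tau_{(\delta)}^{\otimes k}\otimes\tau_{(\delta)}^{\otimes k'}=\tau_{(\delta)}^{\otimes(k+k')}$; and $\tau_{(\delta)}^*=\tau_{(\delta)}$ gives $(\tau_{(\delta)}^{\otimes k})^*=\tau_{(\delta)}^{\otimes k}$. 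With these in hand the functor axioms are immediate. For composition, given $p\in\Partlin_\delta(k,l)$ and $q\in\Partlin_\delta(l,m)$, the middle pair of factors collapses: $\Tc_{(\delta)}(q)\Tc_{(\delta)}(p)=\tau_{(\delta)}^{\otimes m}q\,(\tau_{(\delta)}^{\otimes l}\tau_{(\delta)}^{\otimes l})\,p\,\tau_{(\delta)}^{\otimes k}=\tau_{(\delta)}^{\otimes m}(qp)\tau_{(\delta)}^{\otimes k}=\Tc_{(\delta)}(qp)$, while $\Tc_{(\delta)}(\id_k)=\tau_{(\delta)}^{\otimes k}\tau_{(\delta)}^{\otimes k}=\id_k$. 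For monoidality I would apply the interchange law to pull the outer $\tau_{(\delta)}$'s out of the tensor product, obtaining $\Tc_{(\delta)}(p)\otimes\Tc_{(\delta)}(p')=(\tau_{(\delta)}^{\otimes l}\otimes\tau_{(\delta)}^{\otimes l'})(p\otimes p')(\tau_{(\delta)}^{\otimes k}\otimes\tau_{(\delta)}^{\otimes k'})=\tau_{(\delta)}^{\otimes(l+l')}(p\otimes p')\tau_{(\delta)}^{\otimes(k+k')}=\Tc_{(\delta)}(p\otimes p')$.

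For $*$-compatibility I would first verify $\Tc_{(\delta)}(p^*)=\Tc_{(\delta)}(p)^*$ on a single partition $p\in\Part(k,l)$: using $(qp)^*=p^*q^*$ and $(\tau_{(\delta)}^{\otimes k})^*=\tau_{(\delta)}^{\otimes k}$ we get $\Tc_{(\delta)}(p)^*=(\tau_{(\delta)}^{\otimes l}p\,\tau_{(\delta)}^{\otimes k})^*=\tau_{(\delta)}^{\otimes k}p^*\tau_{(\delta)}^{\otimes l}=\Tc_{(\delta)}(p^*)$, the last equality being the definition of $\Tc_{(\delta)}$ applied to $p^*\in\Part(l,k)$. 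Because $\Tc_{(\delta)}$ is linear and $*$ is antilinear, this identity then propagates to arbitrary linear combinations. Invertibility comes for free from the same cancellation: $\Tc_{(\delta)}(\Tc_{(\delta)}(p))=\tau_{(\delta)}^{\otimes l}(\tau_{(\delta)}^{\otimes l}p\,\tau_{(\delta)}^{\otimes k})\tau_{(\delta)}^{\otimes k}=p$, so $\Tc_{(\delta)}$ is its own inverse and in particular bijective on every morphism space.

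I do not expect a genuine obstacle: the statement is a formal consequence of $\tau_{(\delta)}$ being a self-adjoint unitary for the composition product. The only points deserving attention are the repeated uses of the interchange law, first to see that the two middle copies $\tau_{(\delta)}^{\otimes l}\tau_{(\delta)}^{\otimes l}$ collapse to $\id_l$ (which simultaneously powers functoriality and invertibility) and then to move tensor products past the conjugating factors. Keeping careful track of the object labels $k,l,m$, and of their swap $k\leftrightarrow l$ under the involution, is the only bookkeeping that could go wrong.
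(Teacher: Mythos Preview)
Your proof is correct and follows exactly the same approach as the paper: both derive the monoidal unitary functor property from $\tau_{(\delta)}$ being a self-adjoint unitary and obtain invertibility from $\Tc_{(\delta)}^2=\id$. The paper's proof is just a two-sentence sketch of what you wrote out in full.
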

\begin{proof}
The fact that $\Tc_{(\delta)}$ is a~monoidal unitary functor follows from the above mentioned properties of~$\tau_{(\delta)}$. Finally, we also have $\Tc_{(\delta)}^2=\id$, which proves that it is an isomorphism.
\end{proof}

\begin{rem}
If $\singleton\otimes\singleton\in\Kat\subset\Partlin_\delta$, then $\Tc_{(\delta)}\Kat=\Kat$. Indeed, $\singleton\otimes\singleton\in\Kat$ implies $\tau_{(\delta)}\in\Kat$ and hence $\Tc_{(\delta)}\Kat\subset\Kat$. From the isomorphism property, we have the equality. This implication cannot be reversed. For example, we have $\Tc_{(\delta)}\crosspart=\crosspart$, so $\Tc_{(\delta)}\langle\crosspart\rangle_\delta=\langle\crosspart\rangle_\delta$ although $\singleton\otimes\singleton\not\in\langle\crosspart\rangle_\delta$.
\end{rem}

As a~non-trivial example, we can compute that
\begin{align*}
\Tc_{(\delta)}\fourpart&=\fourpart-{2\over \delta}(\Laaab+\Laaba+\Labaa+\Labbb)\\&+{4\over \delta^2}(\Laabc+\Labac+\Labbc+\Labca+\Labcb+\Labcc)-{16\over \delta^3}\Labcd.
\end{align*}

\begin{prop}
\label{P.Tcnoneasy}
\cite[Example~7.6]{GW18}
The category $\langle\Tc_{(\delta)}\fourpart\rangle_\delta$ is non-easy. In particular,  $\langle\Tc_{(\delta)}\fourpart\rangle_\delta\neq\langle\fourpart\rangle_\delta$.
\end{prop}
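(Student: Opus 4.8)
The plan is to exploit the isomorphism $\Tc_{(\delta)}$ to transport the problem to the manifestly easy category $\langle\fourpart\rangle_\delta$, where a parity invariant settles matters. First I would record that $\Tc_{(\delta)}$ is a category automorphism (by the preceding proposition), so that $\langle\Tc_{(\delta)}\fourpart\rangle_\delta=\Tc_{(\delta)}\langle\fourpart\rangle_\delta$. The generator $\Tc_{(\delta)}\fourpart$, as computed just above, contains the summand $\Labcd$ with the nonzero coefficient $-16/\delta^3$; since $\Pi\,\Labcd=\singleton\otimes\singleton$, if $\Labcd$ lay in $\Kat:=\langle\Tc_{(\delta)}\fourpart\rangle_\delta$ then so would $\singleton\otimes\singleton$. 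Thus by Proposition~\ref{P.summands} it suffices to prove that $\singleton\otimes\singleton\notin\Kat$.

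To reduce the question to $\langle\fourpart\rangle_\delta$, I would check that $\Tc_{(\delta)}$ fixes $\singleton\otimes\singleton$. Indeed, using $\disconnecterpart\cdot\singleton=\delta\,\singleton$ one gets $\tau_{(\delta)}\cdot\singleton=\singleton-\frac{2}{\delta}\disconnecterpart\cdot\singleton=\singleton-2\singleton=-\singleton$, and therefore $\Tc_{(\delta)}(\singleton\otimes\singleton)=(-\singleton)\otimes(-\singleton)=\singleton\otimes\singleton$. Consequently $\singleton\otimes\singleton\in\Kat=\Tc_{(\delta)}\langle\fourpart\rangle_\delta$ if and only if $\singleton\otimes\singleton=\Tc_{(\delta)}^{-1}(\singleton\otimes\singleton)\in\langle\fourpart\rangle_\delta$, so everything comes down to showing $\singleton\otimes\singleton\notin\langle\fourpart\rangle_\delta$.

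For this last step I would use an even-block invariant. Let $\EvenPart$ denote the span of all partitions all of whose blocks have even size. The generators $\fourpart$ and $\pairpart$, as well as $\idpart$, all lie in $\EvenPart$, so once $\EvenPart$ is known to be a category we obtain $\langle\fourpart\rangle_\delta\subseteq\EvenPart$; since $\singleton\otimes\singleton$ has two blocks of odd size $1$, it is not in $\EvenPart$, which finishes the proof. Closure of $\EvenPart$ under tensor product, involution and rotation is immediate, so the one thing that actually needs care -- the main obstacle -- is closure under composition. Here I would argue that in a composite $qp$ every block arises from a connected component of the middle-point gluing graph, and a parity count of outer versus middle points in such a component (each original block contributes an even total, and the multiset of middle points touched is shared between the $p$-side and the $q$-side) shows the resulting outer block has even size again. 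Finally, $\langle\fourpart\rangle_\delta$ is easy, being generated by a single partition (Remark~\ref{R.easy}), whereas $\Kat$ is non-easy, which yields $\langle\Tc_{(\delta)}\fourpart\rangle_\delta\neq\langle\fourpart\rangle_\delta$.
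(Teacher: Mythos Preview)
Your argument is correct. Both you and the paper rely on the same two ingredients~-- that $\Tc_{(\delta)}$ is a category automorphism and that $\langle\fourpart\rangle_\delta\subset\EvenPart_\delta$~-- but you combine them differently. The paper's proof observes that $\fourpart$ itself is a summand of $\Tc_{(\delta)}\fourpart$; hence if $\Kat=\langle\Tc_{(\delta)}\fourpart\rangle_\delta$ were easy it would contain $\fourpart$ and thus $\langle\fourpart\rangle_\delta\subsetneq\Kat$, contradicting the fact that an isomorphism must preserve the dimensions $\dim\Kat(k,l)=\dim\langle\fourpart\rangle_\delta(k,l)$. Your route instead passes through the summand $\Labcd$, deduces $\singleton\otimes\singleton\in\Kat$, and then uses the pleasant observation that $\Tc_{(\delta)}$ fixes $\singleton\otimes\singleton$ to transport the question back to $\langle\fourpart\rangle_\delta$, where the even-block invariant kills it. The paper's argument is slightly slicker (no fixed-point computation needed), while yours is more self-contained about why $\langle\fourpart\rangle_\delta$ misses the relevant element. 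One tiny slip: $\Pi\,\Labcd=\delta\,\singleton\otimes\singleton$, not $\singleton\otimes\singleton$, but since $\delta\neq 0$ this does not affect your argument.
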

\begin{proof}
The inequality follows simply from the fact that $\Tc_{(\delta)}\fourpart\not\in\langle\fourpart\rangle_\delta$. If the category $\langle\Tc_{(\delta)}\fourpart\rangle_\delta$ was easy, then it would contain~$\fourpart$ and be strictly larger than $\langle\fourpart\rangle_\delta$, which would contradict $\Tc_{(\delta)}$ being a~category isomorphism.
\end{proof}

\subsection{The disjoining isomorphism}
\label{secc.disjoin}

\begin{prop}
\label{P.disjoin}
The category $\Kat:=\langle\crosspart-{2\over\delta}(\Pabac+\Pabcb)+{4\over\delta^2}\Pabcd\rangle_\delta$ is isomorphic to $\langle\crosspart\rangle_\delta$ for every $\delta\neq 0$. Consequently, it is a~non-easy category.
\end{prop}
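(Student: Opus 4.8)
The plan is to identify $\Kat$ with a twisted copy of the pairing category $\Pair_\delta=\langle\crosspart\rangle_\delta$ and then to deduce non-easiness exactly as in Proposition~\ref{P.Tcnoneasy}. The starting observation is that the generator is nothing but the crossing deformed by the self-adjoint involution $\tau_{(\delta)}$ of Subsection~\ref{secc.tau}: rotating it to $\Partlin_\delta(2,2)$ and expanding $\tau_{(\delta)}^{\otimes2}$ with the help of $\disconnecterpart\cdot\disconnecterpart=\delta\,\disconnecterpart$, one checks the identity
\[
\crosspart-\tfrac2\delta(\Pabac+\Pabcb)+\tfrac4{\delta^2}\Pabcd=\crosspart\,\tau_{(\delta)}^{\otimes2}.
\]
I would therefore write $g:=\crosspart\,\tau_{(\delta)}^{\otimes2}$, so that $\Kat=\langle g\rangle_\delta$, and try to build a monoidal $*$-isomorphism $\Dc\colon\Pair_\delta\to\Kat$ sending the crossing to $g$ while fixing the cup and cap.

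The crux is that $g$ is again a crossing-type morphism compatible with the \emph{same} cup, so that replacing $\crosspart$ by $g$ does not alter the abstract category. I would record that $\tau:=\tau_{(\delta)}$ is a self-adjoint unitary ($\tau^*=\tau$, $\tau^2=\idpart$, as in Subsection~\ref{secc.tau}) which is in addition \emph{orthogonal}, i.e.\ $(\tau\otimes\tau)\pairpart=\pairpart$; the last fact is the one-line computation
\[
(\tau\otimes\tau)\pairpart=\pairpart-\tfrac2\delta\cdot 2\,(\singleton\otimes\singleton)+\tfrac4{\delta^2}\cdot\delta\,(\singleton\otimes\singleton)=\pairpart.
\]
From $\tau^*=\tau$, $\tau^2=\idpart$ and $\crosspart^2=\idpart\otimes\idpart$ it follows that $g$ is itself a self-adjoint unitary with $g^2=\idpart\otimes\idpart$; that $g$ satisfies the braid (Yang--Baxter) relation, which reduces to the braid relation for $\crosspart$ because $\tau^{\otimes2}$ commutes with $\crosspart$; and that $g$ is symmetric with respect to the cup, $g\,\pairpart=\crosspart(\tau\otimes\tau)\pairpart=\crosspart\,\pairpart=\pairpart$. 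These are precisely the relations defining the crossing in $\Pair_\delta$ (the linear Brauer category, i.e.\ the free symmetric tensor $*$-category on one self-dual object of dimension $\delta$), so the assignment $\crosspart\mapsto g$, $\pairpart\mapsto\pairpart$ extends to a monoidal $*$-functor $\Dc$. Since $\tau^2=\idpart$ makes the deformation involutive, $\Dc$ is invertible onto the category it generates, namely $\langle g,\pairpart\rangle_\delta=\Kat$; hence $\Dc\colon\Pair_\delta\to\Kat$ is an isomorphism.

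Non-easiness then follows as in Proposition~\ref{P.Tcnoneasy}. If $\Kat$ were easy, Proposition~\ref{P.summands} would force it to contain every summand of $g$; in particular it would contain the pairing $\crosspart$, whence $\Pair_\delta=\langle\crosspart\rangle_\delta\subseteq\Kat$, and also $\Pabcd$, which is a union of four singletons and therefore not a pairing, so $\Pabcd\in\Kat\setminus\Pair_\delta$ and the inclusion $\Pair_\delta\subsetneq\Kat$ is strict. But $\Dc$ is an isomorphism that is the identity on objects, so $\dim\Kat(k,l)=\dim\Pair_\delta(k,l)$ for all $k,l$; together with $\Pair_\delta\subseteq\Kat$ this forces $\Kat=\Pair_\delta$, contradicting $\Pabcd\in\Kat\setminus\Pair_\delta$. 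Thus $\Kat$ is non-easy.

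The main obstacle is the well-definedness of $\Dc$, i.e.\ checking that the twisted crossing $g$ obeys the full list of relations of the Brauer category~-- self-adjoint unitarity, the Yang--Baxter relation, and compatibility with the cup and the cap. I expect these to be the only genuine verifications, and all of them to collapse to the three elementary properties of $\tau$ recorded above ($\tau^*=\tau$, $\tau^2=\idpart$, $(\tau\otimes\tau)\pairpart=\pairpart$) together with $\tau^{\otimes2}$ commuting with $\crosspart$; so the difficulty is essentially bookkeeping rather than a conceptual hurdle. This twist is of the same flavour as the anticommutative twists of Section~\ref{sec.twists} and of \cite{GW18}.
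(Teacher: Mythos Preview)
Your approach is genuinely different from the paper's. The paper defines $\Dc$ by an explicit combinatorial formula (replace every ``odd'' pair block of a pairing by $\pairpart-\tfrac{2}{\delta}\singleton\otimes\singleton$), then checks by hand that this commutes with contraction; injectivity is then immediate because $\Dc p=p+(\text{terms with singletons})$. You instead observe the pleasant identity $g=\crosspart\,\tau_{(\delta)}^{\otimes 2}$ and invoke the universal property of the Brauer category. Your verification that $g$ satisfies the Brauer relations is essentially correct, though the Yang--Baxter check needs the full naturality $(\tau\otimes\idpart)\crosspart=\crosspart(\idpart\otimes\tau)$ and $\tau^2=\idpart$, not merely that $\tau^{\otimes 2}$ commutes with $\crosspart$.

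The real gap is injectivity. You assert that ``$\tau^2=\idpart$ makes the deformation involutive, so $\Dc$ is invertible,'' but this is not a proof. The universal property only gives you a surjection $\Dc\colon\Pair_\delta\to\Kat$; to invert it you would need a functor $\Kat\to\Pair_\delta$ sending $g\mapsto\crosspart$, and for that you would need a presentation of $\Kat$, which is exactly what is at stake. The relation $g\cdot\tau^{\otimes 2}=\crosspart$ lives in $\Partlin_\delta$, not in $\Kat$ (since $\tau\notin\Kat$), so it does not produce a functor out of $\Kat$. Your non-easiness argument then relies on the dimension equality $\dim\Kat(k,l)=\dim\Pair_\delta(k,l)$, which in turn rests on this unproven injectivity.

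The cleanest fix is to observe what $\Dc$ actually does to a pair partition: writing $p\in\Pair_\delta(0,2k)$ as a permutation acting on $\pairpart^{\otimes k}$ via crossings, each crossing gets replaced by $\crosspart\tau^{\otimes 2}$, and after pushing the $\tau$'s through by naturality and cancelling pairs via $\tau^2=\idpart$, one finds $\Dc p=p+(\text{partitions with at least one singleton})$. This triangularity with respect to the pairing basis yields injectivity immediately --- and in fact recovers the paper's explicit description of $\Dc$.
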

\begin{proof}
We give an explicit formula for the isomorphism $\Dc\colon\Pair_\delta\to\Kat$ acting on any $p\in\Pair_\delta$ as follows.  Every pair block that has between its legs an odd number of points is replaced by $\langle\hbox{pair}\rangle-{2\over\delta}\langle\hbox{singletons}\rangle$. (We use cyclical order of points in the partition. Since all pair partitions have even number of points, it does not matter from which side we count.) For example,
$$\displaylines{\Labab\mapsto \Labab-{2\over\delta}(\Labac+\Labcb)+{4\over\delta^2}\Labcd,\cr
\Pabab\mapsto \Pabab-{2\over\delta}(\Pabac+\Pabcb)+{4\over\delta^2}\Pabcd,\cr
\LPartition{}{0.3:1,4;1:2,6;0.65:3,5}\mapsto \LPartition{}{0.3:1,4;1:2,6;0.65:3,5}-{2\over\delta}\LPartition{0.3:2,6}{0.6:1,4;1:3,5}-{2\over\delta}\LPartition{0.3:3,5}{0.6:1,4;1:2,6}+{4\over\delta^2}\LPartition{0.4:2,3,5,6}{0.8:1,4}.}$$
Now, we only have to prove that it indeed is a~monoidal $*$\hbox{-}isomorphism.

The proof becomes more clear if we formulate it for partitions with lower points only. In order to check that $\Dc$ is indeed a~monoidal unitary functor, we have to prove that it commutes with the one-line operations. This is clear for the tensor product, rotation, and reflection. Now, let us prove that for any $p\in\Pair_\delta(0,k)$, we have $\Dc(\Pi_1p)=\Pi_1(\Dc p)$. We can assume that $p$ is a~partition, not a linear combination. If $p=\pairpart\otimes q$, then the statement is clear, so assume that the first two points of $p$ belong to different blocks. We call a~pair block {\em even} if it has an even number of points between its legs, otherwise it is {\em odd}.

If the blocks corresponding to the first two points of $p$ are even, then by contracting them, we get an even block. The mapping $\Dc$ acts on even blocks as the identity, so it clearly commutes with the contraction. When contracting an even block with an odd block, we get an odd block. Odd blocks are mapped to $\pairpart-{2\over\delta}\singleton\otimes\singleton=\Lrot\tau_{(\delta)}$ by $\Dc$. When contracting $\Lrot\tau_{(\delta)}$ with a~normal block $\pairpart$, we get $\Lrot\tau_{(\delta)}$, so everything is fine also in this case. Finally if both the blocks are odd, then by contracting them, we get an even block. Also when contracting $\Lrot\tau_{(\delta)}$ with another copy of $\Lrot\tau_{(\delta)}$, we get simply $\pairpart$.

Finally, non-easiness of the category follows directly from the explicit description of its elements: if the category was easy, then it would be equal to $\langle\crosspart,\Pabac,\Pabcb,\Pabcd\rangle_\delta=\langle\crosspart,\singleton\otimes\singleton\rangle_\delta$, which is surely larger and hence non-isomorphic with $\langle\crosspart\rangle_\delta$.
\end{proof}

\begin{rem}
\label{R.noneasyiso}
At first sight, it might appear a~bit confusing that we prove the non-easiness of a~category by showing that it is isomorphic to an easy category. But note that the {\em easiness} and {\em non-easiness} are by no means some fundamental abstract characterizations of the categories. It just says whether we chose a~convenient or an inconvenient way how do describe them. The whole point of Section~\ref{sec.noneasydirect} is to express non-easy categories in terms of the easy ones. That is, to find a~convenient description of categories that were defined inconveniently using linear combinations of partitions.
\end{rem}

\subsection{The joining isomorphism}
\label{secc.join}

\begin{prop}
\label{P.join}
The category $\Kat:=\langle 2\connecterpart-\crosspart\rangle_\delta$ is isomorphic to $\langle\crosspart\rangle_\delta$ for every $\delta\in\C$. Consequently, it is a~non-easy category.
\end{prop}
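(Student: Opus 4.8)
The plan is to mirror the disjoining isomorphism of Proposition~\ref{P.disjoin}: I would construct an explicit monoidal $*$-isomorphism $\Jc\colon\Pair_\delta\to\Kat$ sending the crossing to the generator, $\Jc(\crosspart)=2\connecterpart-\crosspart$, together with $\Jc(\pairpart)=\pairpart$. In contrast with $\Dc$, the coefficients $2$ and $-1$ involve no division by~$\delta$, so I expect the construction to work for \emph{every} $\delta\in\C$, as claimed. As for $\Dc$, I would work with partitions on one line and classify each pair block of a pairing as \emph{even} or \emph{odd} according to the parity of the number of points lying (cyclically) between its legs; this parity is rotation- and reflection-invariant, and the number of odd blocks of a pairing is always even. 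The rule defining $\Jc$ leaves every even block untouched and acts on non-crossing pairings (all of whose blocks are even) as the identity, while it \emph{joins} the odd blocks in pairs according to a matching induced by the crossing structure: on $\crosspart$, whose two blocks are odd and interleave, joining reproduces $\Jc(\crosspart)=-\crosspart+2\connecterpart$. Fixing this matching of odd blocks so that the resulting map is consistent under all operations is itself part of the work; it is the new feature relative to the purely local (block-by-block) cutting rule of $\Dc$.

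First I would verify that $\Jc$ commutes with tensor product, rotation and reflection; exactly as for $\Dc$ these should be immediate once the even/odd classification and the induced matching of odd blocks are seen to be invariant under these operations.

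The main obstacle is the compatibility with contraction, $\Jc(\Pi_1 p)=\Pi_1(\Jc p)$ for every pairing~$p$. This is where joining is genuinely more delicate than cutting: since the rule is \emph{non-local}, contracting two points can simultaneously merge blocks, flip their parity, and change the matching of the odd blocks. I would organise the check along the lines of the proof of Proposition~\ref{P.disjoin}, by a case analysis on the parities of the two blocks meeting at the contracted points (even--even, even--odd, odd--odd, plus the degenerate case $p=\pairpart\otimes q$), showing in each case that contracting and then applying the rule gives the same linear combination as applying $\Jc$ and then contracting. I expect the identities $\connecterpart^2=\connecterpart$, $\crosspart\,\connecterpart=\connecterpart\,\crosspart=\connecterpart$ and $g^2=\idpart\otimes\idpart$ for $g:=2\connecterpart-\crosspart$ (all valid over $\C[\delta]$, hence for all $\delta$) to be the algebraic facts that make the odd--odd case close up.

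Once $\Jc$ is a monoidal $*$-functor fixing $\idpart$ and $\pairpart$, its image is $\langle\Jc(\crosspart)\rangle_\delta=\langle 2\connecterpart-\crosspart\rangle_\delta=\Kat$, so $\Jc\colon\Pair_\delta\to\Kat$ is onto. For injectivity I would note that, in the partition basis, $\Jc(p)=\pm p+(\text{strictly coarser terms})$, where the correction terms arise from joining and are no longer pairings; projecting onto the span of pairings recovers $\pm p$, so the images of distinct pairings are linearly independent and $\Jc$ is a bijection on each morphism space. Finally, non-easiness follows as in Proposition~\ref{P.disjoin}: by Proposition~\ref{P.summands}, if $\Kat$ were easy it would contain both summands $\connecterpart$ and $\crosspart$, forcing $\dim\Kat(0,4)\ge 4$; but $\Jc$ being an isomorphism gives $\dim\Kat(0,4)=\dim\Pair_\delta(0,4)=3$, a contradiction. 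Hence $\Kat$ is non-easy.
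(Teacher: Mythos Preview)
Your plan has a genuine gap: the parity-based rule you propose for~$\Jc$ is not the right one and cannot be fixed by choosing a clever ``matching of odd blocks''. The decisive counterexample is $p=\mathsf{abcabc}$ (i.e.\ the rotation of~$\halflibpart$). Each block $\{1,4\}$, $\{2,5\}$, $\{3,6\}$ has an \emph{even} number of points between its legs, so by your rule there are no odd blocks and $\Jc(p)=p$. But $\halflibpart=(\crosspart\otimes\idpart)(\idpart\otimes\crosspart)(\crosspart\otimes\idpart)$ in $\Part(3,3)$, so any monoidal functor with $\Jc(\crosspart)=g:=2\connecterpart-\crosspart$ must send $\halflibpart$ to $(g\otimes\idpart)(\idpart\otimes g)(g\otimes\idpart)$; expanding, the coefficient of $\halflibpart$ in this product is $-1$, hence $\Jc(\halflibpart)\neq\halflibpart$. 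So the parity rule is incompatible with functoriality from the very start. The same failure is visible already on $p=\mathsf{abcacb}$ (two crossings), which the paper in fact treats as an example: there the odd blocks are $\lb$ and~$\lc$, so your rule would give $-\mathsf{abcacb}+2\,\mathsf{abbabb}$, whereas the correct image has four terms and involves joining the \emph{even} block~$\la$ with~$\lb$ and with~$\lc$.

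The paper's definition of~$\Jc$ is not parity-based but \emph{crossing-based}: for a pairing~$p$ with blocks $\la_1,\dots,\la_k$, let $X_p$ be the set of unordered pairs $\{\la_i,\la_j\}$ that cross, and set
\[
\Jc p\;=\;(-1)^{|X_p|}\sum_{\Xi\subset X_p}(-2)^{|\Xi|}\,p_\Xi,
\]
where $p_\Xi$ is obtained from~$p$ by merging every pair of blocks belonging to~$\Xi$. This is a sum over \emph{all} subsets of crossings, not a single matching. The compatibility with contraction then comes from analysing how $X_p$ changes when two blocks are contracted: if the two contracted blocks~$\la,\lb$ do not cross a common third block the bookkeeping is immediate, and for each block~$x$ with $\{\la,x\},\{\lb,x\}\in X_p$ one uses the key cancellation
\[
\Pi_1\bigl(p_{\{\{\la,x\}\}}+p_{\{\{\lb,x\}\}}-2\,p_{\{\{\la,x\},\{\lb,x\}\}}\bigr)=0.
\]
Your injectivity argument (leading term $\pm p$ plus strictly coarser partitions) and your non-easiness argument via Proposition~\ref{P.summands} are fine and match the paper, but they need the correct formula for~$\Jc$ to stand on.
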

\begin{proof}
We give an explicit formula for the isomorphism $\Pair_\delta\to\Kat$ acting on a~pair partition~$p$ as follows. Every crossing in~$p$ is replaced by $-\langle\hbox{crossing}\rangle+2\langle\hbox{a single block}\rangle$ (by a~single block we mean, that the two blocks that were crossing are united). To be more precise, let $\la_1,\dots,\la_k$ be the set of blocks of~$p$ and denote by~$X_p$ the set of pairs $\{\la_i,\la_j\}$ that cross each other. Then we define
$$\Jc p:=(-1)^{|X_p|}\sum_{\Xi\subset X_p}(-2)^{|\Xi|}p_{\Xi},$$
where $p_{\Xi}$ is created from~$p$ by unifying the pairs of blocks in~$\Xi$.

For example, we map
$$\displaylines{\Pabab\mapsto-\Pabab+2\Paaaa,\cr
\LPartition{}{0.3:1,4;1:2,6;0.65:3,5}\mapsto \LPartition{}{0.3:1,4;1:2,6;0.65:3,5}-2\LPartition{}{1:1,2,4,6;0.65:3,5}-2\LPartition{}{1:2,6;0.65:1,3,4,5}+4\LPartition{}{0.8:1,2,3,4,5,6}.}$$
The second example in word representation reads
\begin{align*}
\Jc\mathsf{abcacb}&=p-2p_{\{\{\la,\lb\}\}}-2p_{\{\{\la,\lc\}\}}+4p_{\{\{\la,\lb\},\{\la,\lc\}\}}\\&=\mathsf{abcacb}-2\,\mathsf{aacaca}-2\,\mathsf{abaaab}+4\,\mathsf{aaaaaa}.
\end{align*}

Now, we only have to prove that it indeed is a~monoidal $*$\hbox{-}isomorphism. We will do this working with partitions on one line. It is clear that the mapping commutes with the tensor product, rotation and reflection. We need to prove this for contraction.

Take a~pair partition~$p$ on $k+2$ points. If $p=\pairpart\otimes q$, then it is easy to see that indeed $\Pi_1(\Jc p)=\Jc(\Pi_1p)=\Jc q$. Now, suppose that the first two points of~$p$ belong to different blocks. Denote the first block by letter~$\la$ and the second block by letter~$\lb$, so the word representation of~$p$ is $p=\la\lb\,x_1x_2\cdots x_k$. Denote $q:=\Pi_1p$ and its word representation $q=\tilde x_1\tilde x_2\cdots\tilde x_k$, where $\tilde x_i=x_i$ if $x_i\neq\la,\lb$ and $\tilde x_i=\lc$ if $x_i=\la$ or $x_i=\lb$. Then it holds that
\begin{align*}
X_q=&\left\{\{x,y\}\in X_p\bigm| \la,\lb\not\in\{x,y\}\right\}\cup\\
    &\left\{\{\lc,x\}\bigm|\{\la,x\}\in X_p\hbox{ and }\{\lb,x\}\not\in X_p\right\}\cup\\
    &\left\{\{\lc,x\}\bigm|\{\la,x\}\not\in X_p\hbox{ and }\{\lb,x\}\in X_p\right\}.
\end{align*}
Denote by~$\pi$ the embedding $X_q\to X_p$. We will prove that $\Pi_1(\Jc p)=\Jc(\Pi_1p)=\Jc q$.

It is easy to see that
$$\Jc q=(-1)^{|X_q|}\,\Pi_1\left(\sum_{\Xi\subset\pi(X_q)\subset X_p}(-2)^{\left|\Xi\right|}p_\Xi\right).$$
In case when $\{\la,\lb\}\not\in X_p$, we have $|X_q|=|X_p|$, so we can exchange this in the formula above. In case when $\{\la,\lb\}\in X_p$, we have $\Pi_1 p_{\Xi}=\Pi_1 p_{\Xi\cup\{\la,\lb\}}$, so
$$\Jc q=(-1)^{|X_p|}\,\Pi_1\left(\sum_{\Xi\subset\pi(X_q)\subset X_p}(-2)^{\left|\Xi\right|}p_\Xi+\sum_{\Xi\subset\pi(X_q)\subset X_p}(-2)^{\left|\Xi\right|+1}p_{\Xi\cup\{\la,\lb\}}\right).$$

It suffices to prove that the rest of the sum is zero. Choose a~block~$x$ of~$p$ such that $\{\la,x\}\in X_p$ and $\{\lb,x\}\in X_p$. Then
$$\Pi_1\left(p_{\{\{\la,x\}\}}+p_{\{\{\lb,x\}\}}-2p_{\{\{\la,x\},\{\lb,x\}\}}\right)=0.$$
Consequently, for any $\Xi\subset X_p$,
\begin{multline*}
\Pi_1\big((-2)^{\left|\Xi\cup\{\{\la,x\}\}\right|}p_{\Xi\cup\{\{\la,x\}\}}+(-2)^{\left|\Xi\cup\{\{\lb,x\}\}\right|}p_{\Xi\cup\{\{\lb,x\}\}}\\+(-2)^{\left|\Xi\cup\{\{\la,x\},\{\lb,x\}\}\right|}p_{\Xi\cup\{\{\la,x\},\{\lb,x\}\}}\big)=0.
\end{multline*}
The missing part of the sum above is a~sum of such terms, so this proves the statement.

Finally, the non-easiness of the category again follows directly from the explicit description of its elements: if the category was easy, then it would be equal to $\langle\crosspart,\Paaaa\rangle_\delta$, which is larger and hence non-isomorphic with $\langle\crosspart\rangle_\delta$.
\end{proof}

\begin{rem}
\label{R.halflibiso}
We can apply this isomorphism also on subcategories of $\Pair_\delta$. The only easy subcategories are the following two. The category of all non-crossing pairings $\langle\rangle_\delta$, where the isomorphism acts as the identity since there is no crossing, and the category $\langle\halflibpart\rangle_\delta$ that is mapped onto the following non-easy category
$$\langle\halflibpart-2\Pabaaba-2\Paabaab-2\Pabbabb+4\Paaaaaa\rangle_\delta.$$
This leads to an additional new non-easy category that was not discovered by our computer experiments since it is generated by a~partition of six points.
\end{rem}

\subsection{Projective morphism}
\label{secc.P}

Regarding this section, most of the work was done in~\cite{GW18}. Therefore, we present the results somewhat briefly here. For a more self-contained version of the text, we refer to the first author's PhD thesis \cite{thesis}.

We assume $\delta\neq 0$ in the whole section.

\begin{defn}
We define $\pi_{(\delta)}:=\idpart-{1\over\delta}\disconnecterpart\in\Partlin_\delta(1,1)$.
\end{defn}

It satisfies $\pi_{(\delta)}\cdot\pi_{(\delta)}=\pi_{(\delta)}$ and $\pi_{(\delta)}^*=\pi_{(\delta)}$. In operator language, $\pi_{(\delta)}$~is an orthogonal projection. This allows us to define the following

\begin{defn}
\label{D.Pc}
For any $p\in\Partlin_\delta(k,l)$ we denote $\Pc_{(\delta)}p:=\pi_{(\delta)}^{\otimes l}p\pi_{(\delta)}^{\otimes k}$. We denote
$$\PartRed_\delta(k,l):=\Pc_{(\delta)}\Partlin_\delta(k,l)=\{\pi_{(\delta)}^{\otimes l}p\pi_{(\delta)}^{\otimes k}\mid p\in\Partlin_\delta(k,l)\}.$$
\end{defn}

The collection of vector spaces $\PartRed_\delta(k,l)$ is closed under the category operations. It forms a~monoidal $*$\hbox{-}category with identity morphism $\pi_{(\delta)}^{\otimes k}\in\PartRed_\delta(k,k)$ and duality morphisms $\Lrot^k\pi_{(\delta)}^{\otimes k}\in\PartRed(0,2k)$.

\begin{ex}
\label{ex.P}
As an example, let us compute the action of~$\Pc_{(\delta)}$ on small block partitions:
\begin{align*}
\Pc_{(\delta)}\singleton&=0,\\
\Pc_{(\delta)}\pairpart&=\pairpart-{1\over \delta}\singleton\otimes\singleton=\Lrot\pi_{(\delta)},\\
\Pc_{(\delta)}\Laaa&=\Laaa-{1\over \delta}(\Laab+\Laba+\Labb)+{2\over \delta^2}\Labc,\\
\Pc_{(\delta)}\Laaaa&=\Laaaa-{1\over \delta}(\Laaab+\Laaba+\Labaa+\Labbb)+\\&\qquad+{1\over \delta^2}(\Laabc+\Labac+\Labca+\Labbc+\Labcb+\Labcc)-{3\over \delta^3}\Labcd.
\end{align*}
\end{ex}

\begin{defn}
Any collection of spaces $\Kat(k,l)\subset\PartRed(k,l)$ containing $\pi_{(\delta)}$ and $\Lrot\pi_{(\delta)}$ that is closed under the category operations will be called a~\emph{reduced linear category of partitions}. For given $p_1,\dots,p_n\in\Partlin_\delta$, we denote by $\langle p_1,\dots,p_n\rangle\dred$ the smallest reduced category containing those {\em generators}.
\end{defn}

\begin{rem}
\label{R.Pc}\leavevmode
\begin{enumerate}
\renewcommand{\theenumi}{\alph{enumi}}
\item Neither the inclusion $\PartRed_\delta(k,l)\subset\Partlin_\delta(k,l)$ nor the mapping $\Pc_{(\delta)}\colon\Partlin_\delta\to\PartRed_\delta$ are functors.
\item For any $p\in\Partlin_\delta$, we have that $\Pc_{(\delta)}p=p+q$, where $q$ is a~linear combination of partitions containing at least one singleton. In addition, we have $\Pc_{(\delta)}p=0$ whenever $p$ contains a~singleton.
\item For a linear category of partitions $\Kat$ with $\singleton\otimes\singleton\in\Kat$, we have that $\Pc_{(\delta)}\Kat$ is a~reduced category \cite[Proposition~5.11]{GW18}.
\end{enumerate}
\end{rem}

\begin{prop}
\label{P.strictP}
Let $\Kat$ be a~reduced category. Then the following categories are mutually different
$$\langle\Kat\rangle_\delta=\langle\Kat,\singleton\otimes\singleton\rangle_\delta\subsetneq\langle\Kat,\Labac\rangle_\delta\subsetneq\langle\Kat,\singleton\rangle_\delta.$$
\end{prop}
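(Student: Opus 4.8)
The plan is to settle the equality first and then reduce each strict inclusion to a single non-containment. For the equality, note that $\pi_{(\delta)}\in\Kat\subseteq\langle\Kat\rangle_\delta$ while $\idpart\in\langle\Kat\rangle_\delta$ by the definition of a category; hence $\disconnecterpart=\delta(\idpart-\pi_{(\delta)})\in\langle\Kat\rangle_\delta$, and a single left rotation gives $\singleton\otimes\singleton=\Lrot\disconnecterpart\in\langle\Kat\rangle_\delta$. As the reverse inclusion is trivial, $\langle\Kat\rangle_\delta=\langle\Kat,\singleton\otimes\singleton\rangle_\delta$. Next I establish the two inclusions ``$\subseteq$''. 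For $\langle\Kat,\singleton\otimes\singleton\rangle_\delta\subseteq\langle\Kat,\Labac\rangle_\delta$ it suffices to contract the first two points of $\Labac$: in words $\Pi(\mathsf{abac})=\mathsf{ac}$, so $\Pi(\Labac)=\singleton\otimes\singleton$ (no closed loop arises, since the block $\la$ survives). For $\langle\Kat,\Labac\rangle_\delta\subseteq\langle\Kat,\singleton\rangle_\delta$ I exhibit $\Labac$ from a single singleton,
\[\Labac=\uppairpart\cdot\big((\idpart\otimes\singleton)\otimes(\idpart\otimes\singleton)\big),\]
where capping the two upper points of the tensored through-strings by $\uppairpart$ links them into the pair $\{1,3\}$ and leaves the singletons at positions $2$ and $4$.

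Using the standard fact that $\langle\Kat,\Labac\rangle_\delta=\langle\langle\Kat,\singleton\otimes\singleton\rangle_\delta,\Labac\rangle_\delta$ (because $\singleton\otimes\singleton=\Pi\Labac$ already lies in it) and similarly $\langle\Kat,\singleton\rangle_\delta=\langle\langle\Kat,\Labac\rangle_\delta,\singleton\rangle_\delta$, the whole proposition now collapses to the two non-containments: (A) $\Labac\notin\langle\Kat,\singleton\otimes\singleton\rangle_\delta$, and (B) $\singleton\notin\langle\Kat,\Labac\rangle_\delta$. To analyse these I would first exploit that $\pi_{(\delta)}$ is idempotent, so $\Pc_{(\delta)}$ is an idempotent linear projection fixing $\PartRed$ and, by Remark~\ref{R.Pc}(b) together with a dimension count, having kernel exactly the span $S$ of all partitions containing at least one singleton. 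Since $\pi_{(\delta)}\in\Kat$, any category $\Cat'$ with $\Kat\subseteq\Cat'$ satisfies $\Pc_{(\delta)}\Cat'\subseteq\Cat'$, whence in each degree $\Cat'(0,l)=\Pc_{(\delta)}\Cat'(0,l)\oplus(\Cat'(0,l)\cap S)$ via $x=\Pc_{(\delta)}x+(x-\Pc_{(\delta)}x)$. As both $\Labac$ and $\singleton\otimes\singleton$ lie in $S$, statements (A) and (B) are questions purely about the singleton parts, i.e.\ about separating the \emph{adjacent} pair of singletons carried by $\singleton\otimes\singleton$ from the \emph{straddling} pair of singletons of $\Labac$, whose two singletons are divided by the legs of the pair $\{1,3\}$.

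This separation is the main obstacle, and no naive grading achieves it: length parity is useless for (A), since all generators and $\Labac$ itself have even length, and it fails for (B) whenever $\Kat$ carries odd-length morphisms such as $\Pc_{(\delta)}\Laaa$; singleton-count parity is not preserved by contraction either (for instance $\Pi(\Laaab)=\singleton\otimes\singleton$ changes it). I would therefore prove (A) and (B) through the Tannaka--Krein correspondence developed in~\cite{GW18}: for $\delta=N$ the three categories are the intertwiner categories of three compact matrix quantum groups forming a strictly decreasing chain (a quantum group, its $\Z_2$-extension distinguishing the presence of $\singleton\otimes\singleton$ from that of $\Labac$, and its enlargement by an invariant vector witnessed by $\singleton$), so distinctness of the groups yields the two strict inclusions, with the remaining values of $\delta$ handled by polynomiality in $\delta$. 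It is worth noting that in the even case $\Kat\subseteq\Partlin_\delta^{\mathrm{even}}$ — which already covers every application in this article, all of whose reduced generators such as~\eqref{eq.p221},~\eqref{eq.p222} have length four — statement (B) has the short direct proof that $\langle\Kat,\Labac\rangle_\delta$ is contained in the subcategory spanned by even-length partitions and hence cannot contain the odd-length $\singleton$; thus only (A) together with the odd case genuinely requires the representation-theoretic input.
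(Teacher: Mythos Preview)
Your handling of the equality and the two set-inclusions is fine (up to a harmless slip: in your construction of $\Labac$ from the singleton you want $\pairpart$ below, not $\uppairpart$ above, to land in $\Partlin_\delta(0,4)$). The substantive divergence from the paper is in the strictness. The paper does \emph{not} invoke quantum groups; it gives a direct, purely combinatorial description of each of the three categories. Writing $K$ for the set of all linear combinations obtained from elements of $\Kat$ by inserting arbitrary singletons (formally: rotations of $q\otimes\singleton$ with $q$ already in $K$), one checks that $\spanlin K$ is closed under the category operations and hence equals $\langle\Kat,\singleton\rangle_\delta$. Restricting to an even number of inserted singletons gives a set $K'$ with $\spanlin K'=\langle\Kat,\Labac\rangle_\delta$, and the still smaller set $K''$ spanned by $\Kat$ together with rotations of $p\otimes\singleton\otimes q\otimes\singleton$ for $p,q\in\Kat$ gives $\langle\Kat\rangle_\delta$. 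From these explicit descriptions it is immediate that $\singleton\notin\spanlin K'$ and $\Labac\notin\spanlin K''$, uniformly in $\delta\neq 0$. Your $\Pc_{(\delta)}$-decomposition $\Cat'=\Pc_{(\delta)}\Cat'\oplus(\Cat'\cap S)$ is exactly the right first step towards this picture, but you stop short of identifying the ``singleton part'' $\Cat'\cap S$ concretely.

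Your proposed route via Tannaka--Krein is indeed mentioned in the paper (in the Remark immediately following the proposition, citing \cite[Thm.~5.19]{GW18}) as a legitimate \emph{alternative}, but only for $\delta=N\in\N$, where the functor $p\mapsto T_p$ exists and the three associated quantum groups can be shown to be distinct. The gap in your argument is the phrase ``the remaining values of $\delta$ handled by polynomiality in $\delta$'': the statement is for a \emph{fixed} $\delta$ and a \emph{fixed} reduced category $\Kat\subset\PartRed_\delta$, which itself depends on $\delta$ through $\pi_{(\delta)}$; there is no one-parameter family of statements in which to interpolate, so a polynomiality argument does not apply. Thus as written your proof covers $\delta\in\N$ but leaves general $\delta\in\C\setminus\{0\}$ unproven, whereas the paper's combinatorial description works uniformly.
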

\begin{proof}
The first equality follows from the fact that $\singleton\otimes\singleton$ is a linear combination of $\pi_{\delta}\in\Kat$ and $\pairpart\in\langle\Kat\rangle_\delta$ and hence must be contained in $\langle\Kat\rangle_\delta$. The following two inclusions are obvious, the main point is to prove the strictness. We do this by explicitly describing the categories.

Let $K$ be the set of all $p'\in\Partlin_\delta$ such that $p'$ was made by adding singletons to some $p\in\Kat$. To be more precise, we can formulate this condition recursively: for any $p\in K$, it holds that either $p\in\Kat$ or there is $q\in K$ such that $p$ is some rotation of $q\otimes\singleton$ (including the possibility that $q$ is a~multiple of the empty partition, so $p=\alpha\singleton\in K$ and $p=\alpha\upsingleton\in K$). It is straightforward to prove that $\spanlin K$ is actually a linear category of partitions. For that, see the proof of \cite[Proposition~5.13]{GW18}. From this, it already follows that actually $\spanlin K=\langle\Kat,\singleton\rangle_\delta$.

In the remaining cases, we proceed in a similar way. We can define $K'\subset K$ by choosing only those elements, where we added just an even number of singletons. With similar argumentation, we can show that $\spanlin K'$ is a~category and hence that $\spanlin K'=\langle\Kat,\Labac\rangle_\delta$. Obviously $\singleton\not\in\spanlin K'$, so we have just proven strictness of the second inclusion.

Finally, we define~$K''$ inductively as follows: $K''(k,l)$ contains all elements of $\Kat(k,l)$ and appropriate rotations of $p\otimes\singleton\otimes q\otimes\singleton$ with $p\in\Kat(0,m)$, $q\in\Kat(0,k+l-m-2)$. Again, we can prove that $\spanlin K''$ is a~category equal to $\langle\Kat\rangle_\delta$, which surely does not contain~$\Labac$.
%
\end{proof}

\begin{rem}\cite[Proposition~5.13]{GW18}
\label{R.redP}
The reduced category $\Kat$ can be reconstructed from those by applying $\Pc_{(\delta)}$:
$$\Kat=\Pc_{(\delta)}\langle\Kat\rangle=\Pc_{(\delta)}\langle\Kat,\Labac\rangle_\delta=\Pc_{(\delta)}\langle\Kat,\singleton\rangle_\delta.$$
\end{rem}

\begin{rem}
There are alternative ways of proving the inequalities in Proposition~\ref{P.strictP}. In \cite[Theorem~5.19]{GW18}, the associated quantum groups were characterized and, in particular, shown to be distinct. Hence, the associated categories must also be distinct. Another viewpoint is provided in \cite[Section~7]{GW19}, where additional categories were discovered:
\begin{align*}
  \langle\Kat\rangle_\delta\subsetneq
  \langle\Kat,\LDabcbad\rangle_\delta&\subsetneq
  \langle\Kat,(\LDabac)^{\otimes k}\rangle_\delta\subsetneq
  \langle\Kat,(\LDabac)^{\otimes l}\rangle_\delta\\&\subsetneq
  \langle\Kat,\LDabac\rangle_\delta=
  \langle\Kat,\Labac\rangle_\delta\subsetneq
  \langle\Kat,\singleton\rangle_\delta.
\end{align*}
Here $k>l>1$, $l\mid k$, and
\begin{align*}
\LDabac&=\Labac-\frac{1}{\delta}\Labcd,\\
\LDabcbad&=\Labcbad-\frac{1}{\delta}\Labcbde-\frac{1}{\delta}\Labcdae+\frac{1}{\delta^2}\Labcdef.
\end{align*}
\end{rem}

Now let us mention some concrete categories. Consider the following:
\begin{align*}
\NCPart_\delta&:=\langle\fourpart,\singleton\rangle_\delta=\langle\Laaa\rangle_{\delta}=\spanlin\{\text{all non-crossing partitions}\},\\
\NCPart'_\delta&:=\langle\fourpart,\singleton\otimes\singleton\rangle_\delta=\spanlin\{\text{all non-crossing partitions of even length}\}.
\end{align*}

\begin{lem}[{\cite[Lemma~6.1]{GW18}}]
\label{L.Smin}
It holds that
$$\Pc_{(\delta)}\NCPart_\delta=\langle\Pc_{(\delta)}\Laaa\rangle\dred.$$
\end{lem}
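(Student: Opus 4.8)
The plan is to avoid pushing $\Pc_{(\delta)}$ through the generation process of $\NCPart_\delta$ directly, since $\Pc_{(\delta)}$ is not a functor (Remark~\ref{R.Pc}(a)) and it annihilates every partition containing a singleton (Remark~\ref{R.Pc}(b)); in particular $\Pc_{(\delta)}\singleton=0$, so the usual generators $\Laaa$ and $\singleton$ of $\NCPart_\delta$ cannot simply be transported to the reduced world. Instead I would exploit the un-reduction principle of Remark~\ref{R.redP}. Writing $\Kat\dred:=\langle\Pc_{(\delta)}\Laaa\rangle\dred$ for the reduced category on the right-hand side, that remark (applied to the reduced category $\Kat\dred$) gives $\Kat\dred=\Pc_{(\delta)}\langle\Kat\dred,\singleton\rangle_\delta$. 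Hence the whole lemma reduces to the purely combinatorial identity $\langle\Kat\dred,\singleton\rangle_\delta=\NCPart_\delta$, after which substituting into the previous equation yields $\Kat\dred=\Pc_{(\delta)}\NCPart_\delta$, as desired.

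To prove the inclusion $\langle\Kat\dred,\singleton\rangle_\delta\subseteq\NCPart_\delta$, I would first record the easy half of the lemma: since $\singleton\otimes\singleton\in\NCPart_\delta$, Remark~\ref{R.Pc}(c) shows $\Pc_{(\delta)}\NCPart_\delta$ is a reduced category, and it visibly contains $\Pc_{(\delta)}\Laaa$; minimality of $\langle\cdot\rangle\dred$ then gives $\Kat\dred\subseteq\Pc_{(\delta)}\NCPart_\delta$. Next, by Remark~\ref{R.Pc}(b) the element $\Pc_{(\delta)}p$ is obtained from a partition $p$ by adding singletons, and adding singletons to a non-crossing partition keeps it non-crossing; thus $\Pc_{(\delta)}\NCPart_\delta\subseteq\NCPart_\delta$. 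Combining these, $\Kat\dred\subseteq\NCPart_\delta$, and since also $\singleton\in\NCPart_\delta$, the inclusion $\langle\Kat\dred,\singleton\rangle_\delta\subseteq\NCPart_\delta$ follows.

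For the reverse inclusion it suffices to show $\Laaa\in\langle\Kat\dred,\singleton\rangle_\delta$, because $\NCPart_\delta=\langle\Laaa\rangle_\delta$. Here I would use the explicit expansion from Example~\ref{ex.P},
\[
\Pc_{(\delta)}\Laaa=\Laaa-\tfrac1\delta(\Laab+\Laba+\Labb)+\tfrac{2}{\delta^2}\Labc,
\]
and observe that each of $\Laab,\Laba,\Labb,\Labc$ lies in $\langle\singleton\rangle_\delta$, being tensor products and rotations of $\pairpart$ and $\singleton$. Solving the displayed identity for $\Laaa$ then expresses it as $\Pc_{(\delta)}\Laaa\in\Kat\dred$ plus a linear combination of elements of $\langle\singleton\rangle_\delta$, so $\Laaa\in\langle\Kat\dred,\singleton\rangle_\delta$. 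This establishes the combinatorial identity and closes the argument.

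The main obstacle is conceptual rather than computational: one must resist trying to prove the statement by transporting category operations through $\Pc_{(\delta)}$, which fails precisely because reduced composition carries an extra internal projection $\pi_{(\delta)}$ that does not in general coincide with $\Pc_{(\delta)}$ of the composite. Routing the proof through Remark~\ref{R.redP} sidesteps this entirely, trading the functoriality issue for the elementary identification of the singleton-completion of $\Kat\dred$ with $\NCPart_\delta$. The only verifications that remain are the bookkeeping ones above, all of which are straightforward.
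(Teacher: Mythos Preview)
The paper does not actually supply a proof of this lemma; it is imported verbatim from \cite[Lemma~6.1]{GW18}. So there is no in-paper argument to compare against, and your task was effectively to produce a self-contained proof from the surrounding material.

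Your argument is correct and is, in fact, a clean way to assemble the statement from the tools already available in Section~\ref{secc.P}. The key move---replacing the non-functorial pushforward by the reconstruction identity $\Kat\dred=\Pc_{(\delta)}\langle\Kat\dred,\singleton\rangle_\delta$ from Remark~\ref{R.redP}---is exactly the right idea: it converts the problem into the elementary equality $\langle\Kat\dred,\singleton\rangle_\delta=\NCPart_\delta$, and both inclusions then follow from bookkeeping (Remark~\ref{R.Pc}(b),(c), Example~\ref{ex.P}, and $\NCPart_\delta=\langle\Laaa\rangle_\delta$). One small point worth making explicit: when you invoke Remark~\ref{R.Pc}(b) to conclude $\Pc_{(\delta)}\NCPart_\delta\subseteq\NCPart_\delta$, the relevant observation is that the extra summands arise by disconnecting points of~$p$ via~$\disconnecterpart$, and disconnecting a point of a non-crossing partition preserves non-crossingness. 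You say ``adding singletons'' which is the right intuition, but it is really this disconnection picture that makes it obvious the result stays inside $\NCPart_\delta$.

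There is no circularity: Remark~\ref{R.redP} is stated before Lemma~\ref{L.Smin} and is a general statement about arbitrary reduced categories, proved in \cite{GW18} independently of the specific example $\NCPart_\delta$.
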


\begin{lem}[{\cite[Lemma~6.2]{GW18}}]
\label{L.SSmin}
Suppose $\delta\neq 3$. It holds that
$$\Pc_{(\delta)}\NCPart'_\delta=\langle\Pc_{(\delta)}\Laaaa\rangle\dred.$$
\end{lem}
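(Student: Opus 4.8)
The plan is to prove the two inclusions separately: one is immediate, and the other reduces, via the reconstruction formula of Remark~\ref{R.redP}, to a single concrete membership statement. For the inclusion $\langle\Pc_{(\delta)}\Laaaa\rangle\dred\subseteq\Pc_{(\delta)}\NCPart'_\delta$, I would observe that $\singleton\otimes\singleton\in\NCPart'_\delta$, so by Remark~\ref{R.Pc}(c) the collection $\Pc_{(\delta)}\NCPart'_\delta$ is a reduced category. Since $\Laaaa\in\NCPart'_\delta$, it contains $\Pc_{(\delta)}\Laaaa$, and being reduced it therefore contains the smallest reduced category generated by $\Pc_{(\delta)}\Laaaa$, which is exactly $\langle\Pc_{(\delta)}\Laaaa\rangle\dred$.

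For the reverse inclusion, write $\Kat:=\langle\Pc_{(\delta)}\Laaaa\rangle\dred$ and $\Lin:=\langle\Pc_{(\delta)}\Laaaa,\singleton\otimes\singleton\rangle_\delta$. Since $\Lrot\pi_{(\delta)}=\pairpart-\frac1\delta\singleton\otimes\singleton\in\Kat$ and $\pairpart\in\langle\Kat\rangle_\delta$, we get $\singleton\otimes\singleton\in\langle\Kat\rangle_\delta$, whence $\langle\Kat\rangle_\delta\supseteq\Lin$. The whole reverse inclusion then follows from the single claim $\Laaaa\in\Lin$: indeed this gives $\NCPart'_\delta=\langle\Laaaa,\singleton\otimes\singleton\rangle_\delta\subseteq\Lin\subseteq\langle\Kat\rangle_\delta$, and applying the monotone operation $\Pc_{(\delta)}$ together with the reconstruction $\Kat=\Pc_{(\delta)}\langle\Kat\rangle_\delta$ of Remark~\ref{R.redP} yields $\Kat\supseteq\Pc_{(\delta)}\NCPart'_\delta$, which combined with the first inclusion gives equality. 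Thus the entire lemma hinges on proving $\Laaaa\in\Lin$, and this is the only place where $\delta\neq3$ is used (everything above works for any $\delta\neq0$).

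To attack $\Laaaa\in\Lin$, I would read off $\Pc_{(\delta)}\Laaaa$ from Example~\ref{ex.P}. Of its summands, the six terms $\Laabc,\Labac,\Labca,\Labbc,\Labcb,\Labcc$ and the term $\Labcd$ are all tensor products and rotations of $\pairpart$ and $\singleton\otimes\singleton$, hence already lie in $\langle\singleton\otimes\singleton\rangle_\delta\subseteq\Lin$. Consequently $\Lin$ contains $\Pc_{(\delta)}\Laaaa$ minus these, i.e.\ $\Laaaa-\frac1\delta(\Laaab+\Laaba+\Labaa+\Labbb)$. Recovering $\Laaaa$ thus amounts to cancelling the four partitions carrying a three-element block next to a singleton; these are the genuinely new content, since a block of size three cannot be produced from pairs and singletons alone. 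The idea is to manufacture, by forming tensor products and contractions of $\Pc_{(\delta)}\Laaaa$ inside $\Lin$, further elements whose three-block content matches the one above but whose coefficient of $\Laaaa$ is different, so that a suitable linear combination isolates a nonzero multiple of $\Laaaa$ modulo $\langle\singleton\otimes\singleton\rangle_\delta$.

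The hard part is exactly this final step: one must organize the finitely many admissible contraction patterns into a linear system for the unknown coefficients and verify that it is nonsingular. The obstruction turns out to be a single factor $\delta-3$ (degenerating at $N=3$), so the construction succeeds for all $\delta\neq0,3$; by contrast, in the odd case of Lemma~\ref{L.Smin} the analogous recovery of $\Laaa$ meets no such obstruction, which is why no restriction on $\delta$ is needed there. Rather than grinding through the contraction bookkeeping, I would carry out (or directly invoke) the explicit computation of \cite[Lemma~6.2]{GW18}, which establishes precisely the membership $\Laaaa\in\Lin$ under the hypothesis $\delta\neq3$.
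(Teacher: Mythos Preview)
The paper gives no proof of this lemma at all; it is stated with a citation to \cite[Lemma~6.2]{GW18} and nothing more. So there is no argument in the paper to compare against. Your reduction is sound: the easy inclusion via Remark~\ref{R.Pc}(c), and the reverse inclusion reduced via Remark~\ref{R.redP} to the single membership $\Laaaa\in\langle\Pc_{(\delta)}\Laaaa,\singleton\otimes\singleton\rangle_\delta$, are both correct and already more than the paper offers. You also correctly isolate the remaining obstruction (eliminating the four three-block-plus-singleton terms) and correctly locate the $\delta\neq 3$ restriction there.

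That said, your final paragraph is not a proof but a deferral: you describe the shape of the computation and then invoke exactly the reference the paper already cites. If the intent is to supply an independent proof, this is a gap; if the intent is merely to match the paper, you have in fact exceeded it, since the paper does not even sketch the reduction you carried out.
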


\begin{prop}
\label{P.Pcnoneasy}
The categories
$$\begin{matrix}
\langle \Pc_{(\delta)}\Laaa,\singleton\otimes\singleton\rangle_\delta  & \subsetneq & \langle \Pc_{(\delta)}\Laaa,\Labac\rangle_\delta  & \subsetneq & \langle\Pc_{(\delta)}\Laaa,\singleton\rangle_\delta\rlap{${}=\NCPart_\delta$}\\
\rotatebox{90}{$\subsetneq$}                                                 &         & \rotatebox{90}{$\subsetneq$}                     & & \rotatebox{90}{$\subsetneq$}\\
\langle \Pc_{(\delta)}\Laaaa,\singleton\otimes\singleton\rangle_\delta & \subsetneq & \langle \Pc_{(\delta)}\Laaaa,\Labac\rangle_\delta & \subsetneq & \langle \Pc_{(\delta)}\Laaaa,\singleton\rangle_\delta
\end{matrix}\kern5em$$
are mutually distinct and, except for the top right one, are all non-easy.
\end{prop}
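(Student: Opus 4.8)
The plan is to reduce the entire statement to the two reduced categories $\Kat_1:=\langle\Pc_{(\delta)}\Laaa\rangle\dred$ and $\Kat_2:=\langle\Pc_{(\delta)}\Laaaa\rangle\dred$ and to the single relation $\Kat_2\subsetneq\Kat_1$. First I would record that each of the six entries of the array equals $\langle\Kat_i,X\rangle_\delta$ with $X\in\{\singleton\otimes\singleton,\Labac,\singleton\}$: in every column $\singleton\otimes\singleton$ is present (it is a contraction of $\Labac$ and a tensor square of $\singleton$) and hence so is $\pi_{(\delta)}$, so the ordinary category generated by the single element $\Pc_{(\delta)}\Laaa$ together with $X$ coincides with the one generated by the whole reduced category $\Kat_1$ together with $X$, and likewise for $\Kat_2$. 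With this identification the three strict horizontal inclusions in each row are exactly \cref{P.strictP} applied to $\Kat_1$ and to $\Kat_2$. For $\Kat_2\subsetneq\Kat_1$ I would use \cref{L.Smin} to write $\Kat_1=\Pc_{(\delta)}\NCPart_\delta$, which contains $\Pc_{(\delta)}\Laaaa$ and hence $\Kat_2$; strictness holds because every generator of $\Kat_2$ (the morphism $\Pc_{(\delta)}\Laaaa$ as well as $\pi_{(\delta)}$ and $\Lrot\pi_{(\delta)}$) has even length, so $\Kat_2$ is supported on even lengths, whereas $0\neq\Pc_{(\delta)}\Laaa\in\Kat_1(0,3)$.

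Next, for the vertical inclusions and the mutual distinctness I would apply $\Pc_{(\delta)}$ together with \cref{R.redP}: every top-row category has $\Pc_{(\delta)}$-image $\Kat_1$ and every bottom-row category has image $\Kat_2$. Since $\Kat_1\neq\Kat_2$, no top category can equal any bottom category; this gives at once the strictness of the three vertical inclusions $\langle\Kat_2,X\rangle_\delta\subsetneq\langle\Kat_1,X\rangle_\delta$ and, together with the horizontal strictness, the mutual distinctness of all six.

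Non-easiness of the top row is the routine part. The top right category is $\langle\Pc_{(\delta)}\Laaa,\singleton\rangle_\delta=\NCPart_\delta=\langle\Laaa\rangle_\delta$, hence easy by \cref{R.easy}. For the top left and top middle, easiness would force (by uniqueness of coordinates in the partition basis, the mechanism of \cref{P.summands}) every summand of $\Pc_{(\delta)}\Laaa$ into the category, in particular the three singletons $\Labc=\singleton\otimes\singleton\otimes\singleton$; a single contraction of $\Labc$ equals $\delta\singleton$, so (as $\delta\neq0$) $\singleton$ would lie in the category, contradicting $\singleton\notin\langle\Kat_1,\singleton\otimes\singleton\rangle_\delta$ and $\singleton\notin\langle\Kat_1,\Labac\rangle_\delta$, which are the strict inclusions of \cref{P.strictP}.

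The main obstacle is the bottom row, where this contraction trick breaks down: all summands of $\Pc_{(\delta)}\Laaaa$ have even length and can therefore never force the odd-length $\singleton$ into the category, and moreover $\Pc_{(\delta)}$ fails to separate, for instance, $\langle\Kat_2,\singleton\otimes\singleton\rangle_\delta$ from the easy category $\NCPart'_\delta$ (both have image $\Kat_2$), so a naive $\Pc_{(\delta)}$-argument cannot detect the defect. The route I would take is to transport the forced full block upward into the largest bottom category. Every bottom entry $\Lin$ satisfies $\Lin\subset\langle\Kat_2,\singleton\rangle_\delta$ and contains $\Pc_{(\delta)}\Laaaa$; if $\Lin$ were easy, its summand $\Laaaa$ would lie in $\Lin$, hence in $\langle\Kat_2,\singleton\rangle_\delta$. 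But $\langle\Laaaa,\singleton\rangle_\delta=\NCPart_\delta$ by the definition of $\NCPart_\delta$, while $\langle\Kat_2,\singleton\rangle_\delta\subset\NCPart_\delta$ since $\Kat_2\subset\NCPart_\delta$; hence $\langle\Kat_2,\singleton\rangle_\delta$ would collapse onto $\NCPart_\delta$. Applying $\Pc_{(\delta)}$ and \cref{L.Smin,R.redP} would then yield $\Kat_2=\Pc_{(\delta)}\langle\Kat_2,\singleton\rangle_\delta=\Pc_{(\delta)}\NCPart_\delta=\Kat_1$, contradicting $\Kat_2\subsetneq\Kat_1$. This one argument disposes of all three bottom categories simultaneously, and I expect the point requiring the most care to be exactly the realization that the non-easy defect is invisible to $\Pc_{(\delta)}$, which is what forces the indirect push-up route rather than a direct separation.
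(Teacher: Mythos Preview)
Your argument is correct and follows essentially the same route as the paper: reduce to the two reduced categories $\Kat_1,\Kat_2$, invoke \cref{P.strictP} for the horizontal strict inclusions, separate the rows by applying $\Pc_{(\delta)}$ via \cref{R.redP}, and derive non-easiness by showing that easiness would force a collapse contradicting the distinctness just established. Your treatment of the bottom row is in fact slightly more streamlined than the paper's: the paper first argues that easiness of the bottom-right entry forces it to equal $\NCPart_\delta$ (contradicting row separation), and then handles the bottom-left and bottom-middle by noting that adjoining $\singleton$ to an easy category preserves easiness, thereby reducing to the bottom-right case; you instead run the single ``$\Laaaa$ forces $\langle\Kat_2,\singleton\rangle_\delta=\NCPart_\delta$, hence $\Kat_2=\Kat_1$'' argument uniformly for all three entries.
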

\begin{proof}
Thanks to Lemmata \ref{L.Smin} and~\ref{L.SSmin}, we can replace $\Pc_{(\delta)}\Laaa$ and $\Pc_{(\delta)}\Laaaa$ by $\Kat_1:=\Pc_{(\delta)}\langle\Laaa\rangle_\delta$ and $\Kat_2:=\Pc_{(\delta)}\langle\Laaaa\rangle_\delta$, respectively. Surely $\Kat_1\neq\Kat_2$ since $\Kat_2$ contains no partition of odd length. From Remark~\ref{R.redP}, it follows that we have $\Pc_{(\delta)}\Kat=\Kat_1$ for categories $\Kat$ in the first line, whereas $\Pc_{(\delta)}\Kat=\Kat_2$ for categories $\Kat$ from the second line. Consequently, no category from the first line can be equal to any category from the second line. The strictness of the horizontal inclusions follows from Proposition~\ref{P.strictP}. Finally, if the first or the second category from the first line was easy, then it would contain the singleton~$\singleton$ and hence be equal to the last one. Also if the last category of the second row was easy, then it would be equal to $\NCPart_\delta$. If one of the first two categories was easy then surely adding the singleton would preserve the easiness and hence the last one would be easy.
\end{proof}

For a quantum group interpretation of these categories, see \cite[Proposition~6.4]{GW18}.

\begin{lem}
\label{L.redex}
We have the following equalities.
\begin{enumerate}
\item $\langle\Pc_{(\delta)}\crosspart\rangle\dred=\Pc_{(\delta)}\langle\crosspart,\singleton\otimes\singleton\rangle_\delta=\Pc_{(\delta)}\langle\crosspart,\singleton\rangle_\delta$.
\item $\langle\Pc_{(\delta)}\halflibpart\rangle\dred=\Pc_{(\delta)}\langle\halflibpart,\singleton\otimes\singleton\rangle_\delta$.
\item $\langle\Pc_{(\delta)}\crosspart,\Pc_{(\delta)}\Laaa\rangle\dred=\Pc_{(\delta)}\langle\crosspart,\Laaa\rangle_\delta=\Pc_{(\delta)}\Partlin_\delta=\PartRed_\delta$.
\item $\langle\Pc_{(\delta)}\crosspart,\Pc_{(\delta)}\Laaaa\rangle\dred=\Pc_{(\delta)}\langle\crosspart,\Laaaa,\singleton\otimes\singleton\rangle_\delta$.
\end{enumerate}
These four reduced categories are mutually distinct.
\end{lem}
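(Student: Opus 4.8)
The plan is to deduce all four identities from one general principle and then separate the categories by passing to their un-reduced versions. The principle I would isolate is: for honest partitions $g_1,\dots,g_n$,
\[
\langle\Pc_{(\delta)}g_1,\dots,\Pc_{(\delta)}g_n\rangle\dred=\Pc_{(\delta)}\langle g_1,\dots,g_n,\singleton\otimes\singleton\rangle_\delta .
\]
The inclusion ``$\subseteq$'' is immediate: since $\langle g_1,\dots,g_n,\singleton\otimes\singleton\rangle_\delta$ contains $\singleton\otimes\singleton$, its image under $\Pc_{(\delta)}$ is a reduced category by Remark~\ref{R.Pc}(c), and it contains each $\Pc_{(\delta)}g_i$, hence the whole reduced category they generate. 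For ``$\supseteq$'' set $\Lin_0:=\langle\Pc_{(\delta)}g_1,\dots,\Pc_{(\delta)}g_n\rangle\dred$. By Remark~\ref{R.redP} we have $\Lin_0=\Pc_{(\delta)}\langle\Lin_0\rangle_\delta$, and since $p\mapsto\Pc_{(\delta)}p$ is a monotone linear map it suffices to check $\langle g_1,\dots,g_n,\singleton\otimes\singleton\rangle_\delta\subseteq\langle\Lin_0\rangle_\delta$. Here $\singleton\otimes\singleton\in\langle\Lin_0\rangle_\delta$ because the duality morphism $\Lrot\pi_{(\delta)}=\pairpart-\frac1\delta\,\singleton\otimes\singleton$ lies in $\Lin_0$ while $\pairpart\in\langle\Lin_0\rangle_\delta$.

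The remaining step, which I expect to be the \emph{main obstacle}, is to recover each honest generator, i.e.\ to prove $g_i\in\langle\Pc_{(\delta)}g_i,\singleton\otimes\singleton\rangle_\delta$. Writing $\idpart=\pi_{(\delta)}+\frac1\delta\,\disconnecterpart$ and expanding gives $g_i=\Pc_{(\delta)}g_i+(\text{terms containing at least one }\disconnecterpart)$; the plan is to show by induction on the number of points that every error term already lies in $\langle\Pc_{(\delta)}g_i,\singleton\otimes\singleton\rangle_\delta$. This is delicate precisely because these error terms are singleton-containing partitions that need \emph{not} lie in $\langle\singleton\otimes\singleton\rangle_\delta$ (for instance a pair whose legs straddle a singleton) and so genuinely require the reduced operations to be rebuilt. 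I note that this principle is exactly Lemmata~\ref{L.Smin} and~\ref{L.SSmin} (from~\cite{GW18}) in the cases $g=\Laaa$ and $g=\Laaaa$, so I would either invoke that machinery directly or carry out the induction.

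Granting the principle, I would treat the four cases. For~(1) it gives the first equality with $g=\crosspart$; the equality with $\Pc_{(\delta)}\langle\crosspart,\singleton\rangle_\delta$ then follows from Remark~\ref{R.redP} applied to $\Lin_0$, namely $\Lin_0=\Pc_{(\delta)}\langle\Lin_0,\singleton\rangle_\delta$ together with $\langle\Lin_0,\singleton\rangle_\delta=\langle\crosspart,\singleton\rangle_\delta$ (since $\langle\Lin_0\rangle_\delta=\langle\crosspart,\singleton\otimes\singleton\rangle_\delta$ and $\singleton$ generates $\singleton\otimes\singleton$). Case~(2) is the principle with $g=\halflibpart$, and case~(4) is the principle with $g_1=\crosspart$, $g_2=\Laaaa$. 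For~(3) the principle with $g_1=\crosspart$, $g_2=\Laaa$ yields $\Pc_{(\delta)}\langle\crosspart,\Laaa,\singleton\otimes\singleton\rangle_\delta$; but $\langle\Laaa\rangle_\delta=\NCPart_\delta$ already contains $\singleton$, so $\langle\crosspart,\Laaa,\singleton\otimes\singleton\rangle_\delta=\langle\crosspart,\Laaa\rangle_\delta=\Partlin_\delta$ by the standard generation of all partitions, and $\Pc_{(\delta)}\Partlin_\delta=\PartRed_\delta$ by Definition~\ref{D.Pc}.

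Finally, for mutual distinctness I would argue as follows. Every generator entering~(1),~(2),~(4) has even length, and the reduced operations together with the morphisms $\pi_{(\delta)}$ and $\Lrot\pi_{(\delta)}$ preserve the parity of the total number of points; hence these three categories consist of even-length elements only, whereas the category in~(3), being all of $\PartRed_\delta$, contains $\Pc_{(\delta)}\Laaa\neq0$ of length three. So~(3) is distinct from the other three. To separate~(1),~(2),~(4) from one another I would use the injectivity of $\Lin_0\mapsto\langle\Lin_0\rangle_\delta$ (again Remark~\ref{R.redP}), which reduces the question to distinctness of the un-reduced categories $\langle\crosspart,\singleton\otimes\singleton\rangle_\delta$, $\langle\halflibpart,\singleton\otimes\singleton\rangle_\delta$ and $\langle\crosspart,\Laaaa,\singleton\otimes\singleton\rangle_\delta$. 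These are pairwise distinct easy categories: $\crosspart\notin\langle\halflibpart,\singleton\otimes\singleton\rangle_\delta$ because half-liberation generates a proper subcategory of all pairings, and $\Laaaa\notin\langle\crosspart,\singleton\otimes\singleton\rangle_\delta$ because the latter contains no block of size larger than two.
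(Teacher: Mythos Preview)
Your reduction for ``$\supseteq$'' breaks down: the intermediate claim $g_i\in\langle\Pc_{(\delta)}g_i,\singleton\otimes\singleton\rangle_\delta$ is \emph{false} in every case you need. Take $g=\crosspart$. By Proposition~\ref{P.strictP} applied to the reduced category $\Kat=\langle\Pc_{(\delta)}\crosspart\rangle\dred$ we have
\[
\langle\Pc_{(\delta)}\crosspart,\singleton\otimes\singleton\rangle_\delta=\langle\Kat,\singleton\otimes\singleton\rangle_\delta\subsetneq\langle\Kat,\Labac\rangle_\delta=\langle\crosspart,\singleton\otimes\singleton\rangle_\delta,
\]
the last equality because $\crosspart=\Pc_{(\delta)}\crosspart+\frac1\delta\Labac+\frac1\delta\Labcb-\frac1{\delta^2}\Labcd$ with $\Labcb=R\,\Labac$. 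Hence $\crosspart\notin\langle\Pc_{(\delta)}\crosspart,\singleton\otimes\singleton\rangle_\delta$. The same obstruction works for $\Laaa$, $\Laaaa$ and $\halflibpart$ (compare Proposition~\ref{P.Pcnoneasy}). So the induction you propose cannot succeed; the error terms with a singleton inside a pair genuinely fail to lie in the target category. The step ``$\Pc_{(\delta)}X\subseteq\Pc_{(\delta)}Y$ follows from $X\subseteq Y$'' is sufficient but asks for an inclusion that does not hold. Your distinctness argument for (1), (2), (4) inherits the same problem, since it identifies $\langle\Lin_0\rangle_\delta$ with $\langle g,\singleton\otimes\singleton\rangle_\delta$.

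The paper avoids lifting to un-reduced categories and works directly in the reduced world. It first notes that $\Pc_{(\delta)}p=0$ whenever $p$ contains a singleton, so one only needs $\Pc_{(\delta)}p\in\langle\Pc_{(\delta)}\crosspart\rangle\dred$ for singleton-free $p$ in the easy category on the right. Any such $p$ is obtained from a non-crossing $p'$ by a permutation, hence by a chain of adjacent transpositions $q=\transpart$. The key identity is the absorption $\pi_{(\delta)}^{\otimes k}q\,\pi_{(\delta)}^{\otimes k}=\pi_{(\delta)}^{\otimes k}q$, which gives $\Pc_{(\delta)}(qp')=(\Pc_{(\delta)}q)(\Pc_{(\delta)}p')$ and lets the induction run entirely inside the reduced category. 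The explicit description of the singleton-free partitions in each of the four easy categories then also separates the reduced categories directly.
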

\begin{proof}
In all cases, the inclusion~$\subset$ is obvious. Below, we explain the inclusions~$\supset$. The proof below also explicitly describes elements of the reduced categories, from which it follows that they are indeed mutually distinct.

In case~(1), denote $\Kat:=\langle\crosspart,\singleton\rangle_\delta$. We need to prove that $\Pc_{(\delta)} p\in\langle\Pc_{(\delta)}\crosspart\rangle_\delta$ for every $p\in\Kat$. Since $\Kat$ is easy, it is enough to prove this for partitions~$p$, which linearly generate~$\Kat$. In addition, we have $\Pc_{(\delta)}p=0$ whenever $p$ contains a~singleton. Thus, it is enough to consider partitions~$p$ not containing singletons. Those are exactly all pairings, i.e.\ partitions $p\in\langle\crosspart\rangle$. Without loss of generality, we can assume that $p$ has lower points only, so $p\in\Pair_\delta(k)$. Such a~pairing~$p$ was made from some non-crossing pairing (such as $\pairpart^{\otimes k/2}$) by permuting its points. The reduced category $\langle\Pc_{(\delta)}\crosspart\rangle\dred$ contains $\Pc_{(\delta)}\pairpart^{\otimes k/2}$. By induction, it is enough to prove that if $\Pc_{(\delta)}p'\in\langle\Pc_{(\delta)}\crosspart\rangle\dred$, then $\Pc_{(\delta)}p\in\langle\Pc_{(\delta)}\crosspart\rangle\dred$, where $p,p'\in\Pair_\delta(k)$ such that $p$ was made from~$p'$ by transposing neighbouring points. This transposition can be realized as $p=qp'$, where $q=\transpart$. The proof is finished by observing that $\pi^{\otimes k}q\pi^{\otimes k}=\pi^{\otimes k}q$, so $\Pc_{(\delta)}p=\pi^{\otimes k}qp'=\pi^{\otimes k}q\pi^{\otimes k}p'=(\Pc_{(\delta)}q)(\Pc_{(\delta)}p')$. Let us illustrate this pictorially:
$$(\Pc_{(\delta)}q)(\Pc_{(\delta)}p')=
\BigPartition{
\Pline(0.5,1.5)(8.5,1.5)
\Pline(0.5,1)(8.5,1)
\Pline(0.5,1)(0.5,1.5)
\Pline(8.5,1)(8.5,1.5)
\Ptext(4.5,1.25){$p'$}
\Pline(1,1)(1,0.7)
\Ptext(2,0.8){$\cdots$}
\Pline(3,1)(3,0.7)
\Pline(4,1)(4,0.7)
\Pline(5,1)(5,0.7)
\Pline(6,1)(6,0.7)
\Ptext(7,0.8){$\cdots$}
\Pline(8,1)(8,0.7)
\Ptext(1,0.5){$\pi$}
\Ptext(3,0.5){$\pi$}
\Ptext(4,0.5){$\pi$}
\Ptext(5,0.5){$\pi$}
\Ptext(6,0.5){$\pi$}
\Ptext(8,0.5){$\pi$}
\Pline(1,0.3)(1,-0.3)
\Ptext(2,0){$\cdots$}
\Pline(3,0.3)(3,-0.3)
\Pline(4,0.3)(5,-0.3)
\Pline(5,0.3)(4,-0.3)
\Pline(6,0.3)(6,-0.3)
\Ptext(7,0){$\cdots$}
\Pline(8,0.3)(8,-0.3)
\Ptext(1,-0.5){$\pi$}
\Ptext(3,-0.5){$\pi$}
\Ptext(4,-0.5){$\pi$}
\Ptext(5,-0.5){$\pi$}
\Ptext(6,-0.5){$\pi$}
\Ptext(8,-0.5){$\pi$}
}
=
\BigPartition{
\Pline(0.5,1.5)(8.5,1.5)
\Pline(0.5,1)(8.5,1)
\Pline(0.5,1)(0.5,1.5)
\Pline(8.5,1)(8.5,1.5)
\Ptext(4.5,1.25){$p'$}
\Pline(1,1)(1,0.2)
\Ptext(2,0.5){$\cdots$}
\Pline(3,1)(3,0.2)
\Pline(4,1)(5,0.2)
\Pline(5,1)(4,0.2)
\Pline(6,1)(6,0.2)
\Ptext(7,0.5){$\cdots$}
\Pline(8,1)(8,0.2)
\Ptext(1,0){$\pi$}
\Ptext(3,0){$\pi$}
\Ptext(4,0){$\pi$}
\Ptext(5,0){$\pi$}
\Ptext(6,0){$\pi$}
\Ptext(8,0){$\pi$}
}
=\Pc_{(\delta)}(qp')=\Pc_{(\delta)}p
$$

The proof in all the remaining cases is similar. The key part is to determine the set of all partitions that are elements of the easy category on the right-hand side and do not contain singletons. In case~(2), the category $\langle\halflibpart,\singleton\otimes\singleton\rangle$ contains all partitions with blocks of size one and two such that both legs of all blocks of size two are either on an even position or on an odd position \cite[Prop.~3.5]{Web13}. So, excluding partitions with singletons, we get exactly elements of $\langle\halflibpart\rangle$. Those can be obtained applying permutations that map even points to even points on the non-crossing pairings. For the induction, we then may use the transposition~$\eventranspart$. In both cases (1) and~(2), it is maybe worth mentioning that the corresponding reduced category is actually isomorphic to $\langle\crosspart\rangle_{\delta-1}$ and $\langle\halflibpart\rangle_{\delta-1}$, respectively; the isomorphism is provided by~$\Vc_{(\delta,\pm)}$ defined in the following section.

For case~(3), we need to prove that $\Pc_{(\delta)} p\in\langle\Pc_{(\delta)}\crosspart,\Pc_{(\delta)}\Laaa\rangle_\delta$ for any partition~$p$ not containing a~singleton. This is again a~permutation $p=qp'$, where $q\in\Pair_\delta(k,k)$ and $p'\in\Partlin_\delta(k)$ is non-crossing partition not containing a~singleton. From Lemma~\ref{L.Smin}, we know that $\Pc_{(\delta)}p'\in\langle\Pc_{(\delta)}\Laaa\rangle_\delta\subset\langle\Pc_{(\delta)}\crosspart,\Pc_{(\delta)}\Laaa\rangle_\delta$ and from item~(1), we have that $\Pc_{(\delta)}q\in\langle\Pc_{(\delta)}\crosspart\rangle\dred\subset\langle\Pc_{(\delta)}\crosspart,\Pc_{(\delta)}\Laaa\rangle\dred$. Finally, again $\Pc_{(\delta)}p=\Pc_{(\delta)}q\Pc_{(\delta)}p$.

Case~(4) is basically the same as case~(3) except that we work with partitions of even size and we need to use Lemma~\ref{L.SSmin}.
\end{proof}

\begin{lem}[{\cite[Proposition~6.6]{GW18}}]
We have the following inclusions.
$$\begin{matrix}
\langle\crosspart,\singleton\otimes\singleton\rangle_\delta&=&\langle\crosspart,\Labac\rangle_\delta&\subsetneq&\langle\crosspart,\singleton\rangle_\delta\\
\rotatebox{90}{$\subsetneq$}&&\rotatebox{90}{$=$}&&\rotatebox{90}{$=$}\\
\langle\Pc_{(\delta)}\crosspart,\singleton\otimes\singleton\rangle_\delta&\subsetneq&\langle\Pc_{(\delta)}\crosspart,\Labac\rangle_\delta&\subsetneq&\langle\Pc_{(\delta)}\crosspart,\singleton\rangle_\delta
\end{matrix}$$
\end{lem}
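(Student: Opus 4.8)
The plan is to deduce the whole diagram from a single application of Proposition~\ref{P.strictP} to the reduced category $\Kat:=\langle\Pc_{(\delta)}\crosspart\rangle\dred$, supplemented by some elementary book-keeping of singletons; throughout I assume $\delta\neq0$, as in this section. The starting point is the explicit one-line expansion (rotating $\crosspart$ to the word $\Labab$),
$$\Pc_{(\delta)}\Labab=\Labab-\tfrac1\delta(\Labac+\Labcb)+\tfrac1{\delta^2}\Labcd,$$
obtained exactly as in the proof of Proposition~\ref{P.disjoin} but with the projection $\pi_{(\delta)}$ in place of $\tau_{(\delta)}$. In accordance with Remark~\ref{R.Pc}(b) the non-leading terms are precisely the partitions on four points with blocks of size at most two carrying a (necessarily even, hence nonzero) number of singletons, and each of them already lies in $\langle\Labac\rangle_\delta$: indeed $\Labcb$ is a rotation of $\Labac$, while $\singleton\otimes\singleton=\Pi\,\Labac$ and $\Labcd=(\singleton\otimes\singleton)^{\otimes2}$. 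Hence $\crosspart$ and $\Pc_{(\delta)}\crosspart$ are interchangeable as generators the moment $\Labac$ (or anything generating it) is available; this one remark will drive every vertical identity.

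Next I would record the book-keeping identity $\langle\Pc_{(\delta)}\crosspart,X\rangle_\delta=\langle\Kat,X\rangle_\delta$ for each $X\in\{\singleton\otimes\singleton,\Labac,\singleton\}$. The inclusion $\supseteq$ is clear because $\Pc_{(\delta)}\crosspart\in\Kat$. For $\subseteq$, each such $X$ produces $\pi_{(\delta)}$ (we have $\disconnecterpart=\Rrot(\singleton\otimes\singleton)$ and $\singleton\otimes\singleton=\Pi\,\Labac$), so $\langle\Pc_{(\delta)}\crosspart,X\rangle_\delta$ is an ordinary category that contains $\Pc_{(\delta)}\crosspart$, $\pi_{(\delta)}$ and $\Lrot\pi_{(\delta)}$ and is closed under the category operations; since every reduced-category operation is a category operation, it therefore contains the smallest reduced category $\Kat$ built from $\Pc_{(\delta)}\crosspart$. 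Granting this, the entire bottom row
$$\langle\Pc_{(\delta)}\crosspart,\singleton\otimes\singleton\rangle_\delta=\langle\Kat\rangle_\delta\subsetneq\langle\Kat,\Labac\rangle_\delta\subsetneq\langle\Kat,\singleton\rangle_\delta$$
is nothing but Proposition~\ref{P.strictP} for $\Kat$.

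For the top row I would work combinatorially inside the easy category $\langle\crosspart,\singleton\otimes\singleton\rangle_\delta$. Conjugating $\pairpart\otimes\singleton\otimes\singleton$ by the neighbour transposition $\idpart\otimes\crosspart\otimes\idpart$ turns the word $\mathsf{aabc}$ into $\mathsf{abac}$, so $\Labac\in\langle\crosspart,\singleton\otimes\singleton\rangle_\delta$; conversely $\Pi\,\Labac=\singleton\otimes\singleton$, giving $\langle\crosspart,\singleton\otimes\singleton\rangle_\delta=\langle\crosspart,\Labac\rangle_\delta$. The strictness of $\langle\crosspart,\Labac\rangle_\delta\subsetneq\langle\crosspart,\singleton\rangle_\delta$ rests on a parity invariant: all generators of $\langle\crosspart,\singleton\otimes\singleton\rangle_\delta$ have even length and tensor product, contraction, rotation and reflection all preserve length modulo two, so the odd-length partition $\singleton$ cannot lie in it.

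Finally I would settle the vertical equalities. The middle one is immediate from the interchangeability of the first paragraph: $\Pc_{(\delta)}\crosspart\in\langle\crosspart,\Labac\rangle_\delta$ and $\crosspart=\Pc_{(\delta)}\crosspart+\tfrac1\delta(\Labac+\Labcb)-\tfrac1{\delta^2}\Labcd\in\langle\Pc_{(\delta)}\crosspart,\Labac\rangle_\delta$, so $\langle\crosspart,\Labac\rangle_\delta=\langle\Pc_{(\delta)}\crosspart,\Labac\rangle_\delta$. For the right one, $\subseteq$ is immediate, and for $\supseteq$ I would exploit that Proposition~\ref{P.strictP} already guarantees $\Labac\in\langle\Kat,\Labac\rangle_\delta\subseteq\langle\Kat,\singleton\rangle_\delta=\langle\Pc_{(\delta)}\crosspart,\singleton\rangle_\delta$; then all non-leading terms of $\Pc_{(\delta)}\crosspart$ lie in $\langle\Labac,\singleton\rangle_\delta$ and hence $\crosspart\in\langle\Pc_{(\delta)}\crosspart,\singleton\rangle_\delta$, giving $\langle\crosspart,\singleton\rangle_\delta=\langle\Pc_{(\delta)}\crosspart,\singleton\rangle_\delta$. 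The remaining two vertical relations are then forced: the left one reads $\langle\Pc_{(\delta)}\crosspart,\singleton\otimes\singleton\rangle_\delta=\langle\Kat\rangle_\delta\subsetneq\langle\Kat,\Labac\rangle_\delta=\langle\crosspart,\singleton\otimes\singleton\rangle_\delta$ by the middle equality together with the top-row identity, and the right-hand vertical is exactly the equality just proved. I expect the genuine obstacle to be precisely this right-hand vertical: recovering the honest crossing $\crosspart$ from $\Pc_{(\delta)}\crosspart$ and $\singleton$ looks circular, since one seems to need a crossing in order to build $\Labac$ in the first place; the point is that Proposition~\ref{P.strictP}, proved independently for abstract reduced categories, hands us $\Labac\in\langle\Kat,\singleton\rangle_\delta$ for free, so no crossing must be manufactured by hand.
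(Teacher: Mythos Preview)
Your argument is correct and follows essentially the same route as the paper: apply Proposition~\ref{P.strictP} to the reduced category $\Kat=\langle\Pc_{(\delta)}\crosspart\rangle\dred$ for the bottom row, and use the explicit expansion of $\Pc_{(\delta)}\crosspart$ to handle the vertical equalities. The paper is more terse---it cites \cite{Web13} for the top row and invokes Lemma~\ref{L.redex} to justify replacing $\Pc_{(\delta)}\crosspart$ by $\Kat$---whereas you supply both of these ingredients directly, which is fine.

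One comment on your final paragraph: the ``genuine obstacle'' you anticipate at the right-hand vertical is illusory. The partition $\Labac$ (block $\{1,3\}$, singletons $\{2\}$ and $\{4\}$) is \emph{non-crossing}, so it already lies in $\langle\singleton\rangle_\delta$ without any appeal to Proposition~\ref{P.strictP} or to $\Kat$. Concretely, $\Labac$ is a rotation of $\idpart\otimes\disconnecterpart$. Hence the right vertical equality is as immediate as the middle one: all the correction terms $\Pabac$, $\Pabcb$, $\Pabcd$ in the expansion of $\Pc_{(\delta)}\crosspart$ are generated by $\singleton$ alone, so $\crosspart$ and $\Pc_{(\delta)}\crosspart$ are interchangeable generators in the presence of $\singleton$. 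Your detour through Proposition~\ref{P.strictP} is valid but unnecessary; the paper simply says ``the vertical equalities are then easy to see'' from the expansion, and this is indeed all that is needed.
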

\begin{proof}
The first row is known from classification of the easy categories, see \cite{Web13}. In the second row, we can replace $\Pc_{(\delta)}\crosspart$ by $\langle\Pc_{(\delta)}\crosspart\rangle\dred$ thanks to Lemma~\ref{L.redex}. The inclusions then follow from Proposition~\ref{P.strictP}. The vertical equalities are then easy to see if we write 
\[\Pc_{(\delta)}\crosspart=\crosspart-{1\over \delta}\Pabac-{1\over \delta}\Pabcb+{1\over \delta^2}\Pabcd.\qedhere\]
\end{proof}

\begin{lem}[{\cite[Remark~6.9]{GW18}}]
We have the following inclusions.
$$\begin{matrix}
\langle\crosspart,\singleton\otimes\singleton\rangle_\delta&=&\langle\crosspart,\Labac\rangle_\delta&\subsetneq&\langle\crosspart,\singleton\rangle_\delta\\
\rotatebox{90}{$\subsetneq$}&&\rotatebox{90}{$=$}&&\rotatebox{90}{$=$}\\
\langle\halflibpart,\singleton\otimes\singleton\rangle_\delta&\subsetneq&\langle\halflibpart,\Labac\rangle_\delta&\subsetneq&\langle\halflibpart,\singleton\rangle_\delta\\
\rotatebox{90}{$\subsetneq$}&&\rotatebox{90}{$\subsetneq$}&&\rotatebox{90}{$\subsetneq$}\\
\langle\Pc_{(\delta)}\halflibpart,\singleton\otimes\singleton\rangle_\delta&\subsetneq&\langle\Pc_{(\delta)}\halflibpart,\Labac\rangle_\delta&\subsetneq&\langle\Pc_{(\delta)}\halflibpart,\singleton\rangle_\delta
\end{matrix}$$
\end{lem}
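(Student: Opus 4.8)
The plan is to split the diagram into the part that lives entirely among \emph{easy} categories (the first two rows) and the part involving the reduced generator $\Pc_{(\delta)}\halflibpart$ (the bottom row), exactly as in the preceding lemma; the one genuinely new feature is that the third layer will sit \emph{strictly} below the easy ones in every column. First I would dispose of the top two rows. All nine entries there are generated by partitions, hence easy, so their mutual relations can be read off from the classification of easy categories. The first row coincides with the top row of the preceding lemma. The horizontal chain $\langle\halflibpart,\singleton\otimes\singleton\rangle_\delta\subsetneq\langle\halflibpart,\Labac\rangle_\delta\subsetneq\langle\halflibpart,\singleton\rangle_\delta$ of the second row, the strict vertical inclusion $\langle\halflibpart,\singleton\otimes\singleton\rangle_\delta\subsetneq\langle\crosspart,\singleton\otimes\singleton\rangle_\delta$ in the first column, and the two vertical equalities $\langle\halflibpart,\Labac\rangle_\delta=\langle\crosspart,\Labac\rangle_\delta$ and $\langle\halflibpart,\singleton\rangle_\delta=\langle\crosspart,\singleton\rangle_\delta$ all follow from \cite{Web13}: adjoining $\Labac$ or $\singleton$ to the half-liberated category breaks its parity constraint and recovers the whole pairing category, while adjoining only $\singleton\otimes\singleton$ does not produce $\crosspart$.

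Next I would set up the bottom row through the reduced-category machinery. By Lemma~\ref{L.redex}(2), $\Kat:=\langle\Pc_{(\delta)}\halflibpart\rangle\dred=\Pc_{(\delta)}\langle\halflibpart,\singleton\otimes\singleton\rangle_\delta$ is a reduced category. Since $\singleton\otimes\singleton\in\langle\Labac\rangle_\delta$, $\singleton\otimes\singleton\in\langle\singleton\rangle_\delta$, and $\pi_{(\delta)}\in\langle\singleton\otimes\singleton\rangle_\delta$, each bottom-row category equals the corresponding category built from $\Kat$, that is $\langle\Pc_{(\delta)}\halflibpart,\singleton\otimes\singleton\rangle_\delta=\langle\Kat\rangle_\delta$, $\langle\Pc_{(\delta)}\halflibpart,\Labac\rangle_\delta=\langle\Kat,\Labac\rangle_\delta$ and $\langle\Pc_{(\delta)}\halflibpart,\singleton\rangle_\delta=\langle\Kat,\singleton\rangle_\delta$ (the inclusion $\supseteq$ uses $\Lrot\pi_{(\delta)}\in\Kat$, the inclusion $\subseteq$ that every reduced operation is realizable using $\pi_{(\delta)}$, cf.\ Remark~\ref{R.redP}). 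The horizontal strict inclusions of the bottom row are then precisely Proposition~\ref{P.strictP} applied to $\Kat$. The downward inclusions $\subseteq$ from the second to the third row are immediate, since each $X\in\{\singleton\otimes\singleton,\Labac,\singleton\}$ generates $\pi_{(\delta)}$ and hence $\Pc_{(\delta)}\halflibpart\in\langle\halflibpart,X\rangle_\delta$.

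The heart of the proof, and the step I expect to be hardest, is the \emph{strictness} of the three vertical inclusions between the second and third rows (equivalently, the non-easiness of the bottom row). For the second and third columns I would argue with the linear projection $\sigma$ of $\Partlin_\delta(0,l)$ onto the span of the singleton-free partitions, sending every partition that contains a singleton to $0$. By the explicit description of the elements of $\langle\Kat,\Labac\rangle_\delta$ and $\langle\Kat,\singleton\rangle_\delta$ obtained in the proof of Proposition~\ref{P.strictP} (every such element is a linear combination of rotations of elements of $\Kat$ with singletons attached), together with the facts that $\sigma$ commutes with rotation and annihilates any partition carrying an attached singleton, one gets $\sigma\bigl(\langle\Kat,\Labac\rangle_\delta\bigr)=\sigma\bigl(\langle\Kat,\singleton\rangle_\delta\bigr)=\sigma(\Kat)=\spanlin\{\text{half-liberated pairings}\}$, where the last equality uses $\Pc_{(\delta)}p=p+(\text{singleton terms})$ and $\Pc_{(\delta)}(\text{singleton partition})=0$ from Remark~\ref{R.Pc}(b). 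Since $\crosspart$ is \emph{not} a half-liberated pairing, $\sigma(\crosspart)=\crosspart$ lies outside this span, so $\crosspart\notin\langle\Kat,\Labac\rangle_\delta$ and $\crosspart\notin\langle\Kat,\singleton\rangle_\delta$, whereas $\crosspart$ does belong to the row-two entries $\langle\halflibpart,\Labac\rangle_\delta=\langle\crosspart,\Labac\rangle_\delta$ and $\langle\halflibpart,\singleton\rangle_\delta=\langle\crosspart,\singleton\rangle_\delta$. This yields strictness in the second and third columns.

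The first column resists this argument, because both $\langle\Kat\rangle_\delta$ and $\langle\halflibpart,\singleton\otimes\singleton\rangle_\delta$ have the same singleton-free part, so the leading-term invariant $\sigma$ cannot see the difference; this is the real obstacle. The clean way around it is to reduce the first column to the second: if one had $\langle\Kat\rangle_\delta=\langle\halflibpart,\singleton\otimes\singleton\rangle_\delta$, then adjoining $\Labac$ to both sides and using $\singleton\otimes\singleton\in\langle\Labac\rangle_\delta$ would give $\langle\Kat,\Labac\rangle_\delta=\langle\halflibpart,\Labac\rangle_\delta$, contradicting the strictness just proved in the second column. Hence the first column is strict as well. (Alternatively, strictness throughout can simply be imported from the quantum-group characterization in \cite[Theorem~5.19]{GW18}, where the associated quantum groups are shown to be pairwise distinct.)
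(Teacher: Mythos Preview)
Your argument is correct, and it handles the first two rows and the bottom-row horizontal strictness exactly as the paper does (citing \cite{Web13} for the easy part, then Lemma~\ref{L.redex}(2) and Proposition~\ref{P.strictP} for the reduced part). The difference lies in how you establish the strict vertical inclusions between rows~2 and~3.

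For the second and third columns, the paper does not introduce your singleton-free projection~$\sigma$; it simply applies~$\Pc_{(\delta)}$ to both rows. By Remark~\ref{R.redP} one has $\Pc_{(\delta)}\langle\Kat,\Labac\rangle_\delta=\Pc_{(\delta)}\langle\Kat,\singleton\rangle_\delta=\Kat=\langle\Pc_{(\delta)}\halflibpart\rangle\dred$, whereas for the middle row (which coincides with the top row in those columns) Lemma~\ref{L.redex}(1) gives $\Pc_{(\delta)}\langle\crosspart,\Labac\rangle_\delta=\Pc_{(\delta)}\langle\crosspart,\singleton\rangle_\delta=\langle\Pc_{(\delta)}\crosspart\rangle\dred$. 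These two reduced categories are declared mutually distinct at the end of Lemma~\ref{L.redex}, so the strictness follows. Your $\sigma$-invariant reaches the same conclusion via the explicit description of $\langle\Kat,\singleton\rangle_\delta$ from the proof of Proposition~\ref{P.strictP}; this is more hands-on but has the virtue of not relying on Remark~\ref{R.redP} (which is imported from \cite{GW18}).

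For the first column, your reduction ``adjoin $\Labac$ and use column~2'' is in fact cleaner than what the paper does. The paper instead exhibits an explicit witness $(\disconnecterpart\otimes\pi_{(\delta)}\otimes\pi_{(\delta)})\halflibpart\in\langle\halflibpart,\singleton\otimes\singleton\rangle_\delta$ and argues that if this element lay in $\langle\Pc_{(\delta)}\halflibpart,\singleton\otimes\singleton\rangle_\delta\subset\langle\Pc_{(\delta)}\halflibpart,\singleton\rangle_\delta$, then composing with $\upsingleton\otimes\idpart\otimes\idpart$ and $\idpart\otimes\idpart\otimes\singleton$ would force $\Pc_{(\delta)}\crosspart\in\langle\Pc_{(\delta)}\halflibpart\rangle\dred$, contradicting Lemma~\ref{L.redex} once more. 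Your observation that equality in column~1 propagates to equality in column~2 under $\langle-,\Labac\rangle_\delta$ bypasses this construction entirely.
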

\begin{proof}
The first two rows are again known from classification of easy categories \cite{Web13}. The last row follows again from Lemma~\ref{L.redex} and Proposition~\ref{P.strictP}. Now we explain the strictness of the vertical inclusions between the second and the last row. For the second and third column, we simply apply~$\Pc_{(\delta)}$. We get $\langle\Pc_{(\delta)}\crosspart\rangle\dred$ for the second row, but $\langle\Pc_{(\delta)}\halflibpart\rangle\dred$ for the third row, which proves the inequality.

As for the first column, we can see that $(\disconnecterpart\otimes\pi_{(\delta)}\otimes\pi_{(\delta)})\halflibpart\in\langle\halflibpart,\singleton\otimes\singleton\rangle_\delta$. We prove that this element cannot be contained in $\langle\Pc_{(\delta)}\halflibpart,\singleton\otimes\singleton\rangle_\delta$. If it was there, then it would be contained also in $\langle\Pc_{(\delta)}\halflibpart,\singleton\rangle_\delta$. Composing with $\upsingleton\otimes\idpart\otimes\idpart$ from left and with $\idpart\otimes\idpart\otimes\singleton$ from right, we get $\Pc_{(\delta)}\crosspart\in\langle\Pc_{(\delta)}\halflibpart,\singleton\rangle_\delta$ and hence $\Pc_{(\delta)}\crosspart\in\langle\Pc_{(\delta)}\halflibpart\rangle\dred\subsetneq\langle\Pc_{(\delta)}\crosspart\rangle\dred$, which is a~contradiction.
\end{proof}

Let us summarize all the non-easy categories we obtained above.

\begin{prop}
\label{P.Pcnoneasy2}
The categories
\vskip-30pt
$$\begin{matrix}
&&&&\Partlin_\delta\\
&&&&\rotatebox{90}{$=$}\\
\langle\Pc_{(\delta)}\crosspart, \Pc_{(\delta)}\Laaa,\singleton\otimes\singleton\rangle_\delta  & \subsetneq & \langle\Pc_{(\delta)}\crosspart, \Pc_{(\delta)}\Laaa,\Labac\rangle_\delta  & \subsetneq & \langle\Pc_{(\delta)}\crosspart,\Pc_{(\delta)}\Laaa,\singleton\rangle_\delta\\
\rotatebox{90}{$\subsetneq$}                                                            &         & \rotatebox{90}{$\subsetneq$}                                     & & \rotatebox{90}{$\subsetneq$}\\
\langle\Pc_{(\delta)}\crosspart, \Pc_{(\delta)}\Laaaa,\singleton\otimes\singleton\rangle_\delta & \subsetneq & \langle\Pc_{(\delta)}\crosspart, \Pc_{(\delta)}\Laaaa,\Labac\rangle_\delta & \subsetneq & \langle\Pc_{(\delta)}\crosspart, \Pc_{(\delta)}\Laaaa,\singleton\rangle_\delta\\
\rotatebox{90}{$\subsetneq$}                                                            &         & \rotatebox{90}{$\subsetneq$}                                     & & \rotatebox{90}{$\subsetneq$}\\
\langle\Pc_{(\delta)}\crosspart,\singleton\otimes\singleton\rangle_\delta & \subsetneq & \langle\Pc_{(\delta)}\crosspart,\Labac\rangle_\delta & \subsetneq & \langle\Pc_{(\delta)}\crosspart,\singleton\rangle_\delta\\
\rotatebox{90}{$\subsetneq$}&&\rotatebox{90}{$\subsetneq$}&&\rotatebox{90}{$\subsetneq$}\\
\langle\Pc_{(\delta)}\halflibpart,\singleton\otimes\singleton\rangle_\delta&\subsetneq&\langle\Pc_{(\delta)}\halflibpart,\Labac\rangle_\delta&\subsetneq&\langle\Pc_{(\delta)}\halflibpart,\singleton\rangle_\delta
\end{matrix}$$
are mutually distinct and, except for the top right one, are all non-easy. They are also distinct from the categories of Proposition~\ref{P.Pcnoneasy}.
\end{prop}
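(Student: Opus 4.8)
The plan is to organise the fourteen categories into four horizontal triples, one for each of the four reduced categories appearing in Lemma~\ref{L.redex}, and to deduce every inclusion and every non-easiness claim from the results already established. Write $\Kat_3:=\PartRed_\delta$, $\Kat_4:=\langle\Pc_{(\delta)}\crosspart,\Pc_{(\delta)}\Laaaa\rangle\dred$, $\Kat_1:=\langle\Pc_{(\delta)}\crosspart\rangle\dred$ and $\Kat_2:=\langle\Pc_{(\delta)}\halflibpart\rangle\dred$ for the reduced categories attached to the four data rows from top to bottom. By Lemma~\ref{L.redex} these four are mutually distinct, and since each generating set is contained in the next they are properly nested $\Kat_2\subsetneq\Kat_1\subsetneq\Kat_4\subsetneq\Kat_3$. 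Using Lemmata~\ref{L.Smin} and~\ref{L.SSmin} to trade $\Pc_{(\delta)}\Laaa$ and $\Pc_{(\delta)}\Laaaa$ for the reduced categories they generate, each data row becomes exactly the triple $\langle\Kat_i\rangle_\delta=\langle\Kat_i,\singleton\otimes\singleton\rangle_\delta\subsetneq\langle\Kat_i,\Labac\rangle_\delta\subsetneq\langle\Kat_i,\singleton\rangle_\delta$ of Proposition~\ref{P.strictP}. That proposition then supplies all horizontal strict inclusions at once, and Lemma~\ref{L.redex}(3) identifies the top-right entry with $\langle\PartRed_\delta,\singleton\rangle_\delta=\Partlin_\delta$.

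Next I would settle distinctness across rows and vertical strictness together. By Remark~\ref{R.redP}, applying $\Pc_{(\delta)}$ to any of the three categories in the $i$-th row returns the reduced category $\Kat_i$; since the four $\Kat_i$ are distinct, no category of one row can coincide with a category of another. The vertical inclusions hold because $\Kat_{i+1}\subseteq\Kat_i$, whence $\langle\Kat_{i+1},X\rangle_\delta\subseteq\langle\Kat_i,X\rangle_\delta$ for each $X\in\{\singleton\otimes\singleton,\Labac,\singleton\}$, and each such inclusion is strict because its two ends have distinct $\Pc_{(\delta)}$-images $\Kat_{i+1}\neq\Kat_i$. Distinctness from the six categories of Proposition~\ref{P.Pcnoneasy} is obtained the same way: those have $\Pc_{(\delta)}$-image $\langle\Pc_{(\delta)}\Laaa\rangle\dred$ or $\langle\Pc_{(\delta)}\Laaaa\rangle\dred$, neither of which contains $\Pc_{(\delta)}\crosspart$ or $\Pc_{(\delta)}\halflibpart$, so all four reduced categories here differ from them.

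For non-easiness I would follow the pattern of Proposition~\ref{P.Pcnoneasy}. By Proposition~\ref{P.summands}, an entry is easy precisely when it contains all partition summands of its generators, in particular the singleton-free summand among $\crosspart$, $\Laaa$, $\Laaaa$, $\halflibpart$. If the left or middle entry of a row were easy, it would then contain $\singleton$ (by Remark~\ref{R.easy} those partitions generate the singleton) and so collapse onto the right entry of the same row, contradicting the horizontal strictness; hence the left and middle entries are non-easy. For a right entry $\langle\Kat_i,\singleton\rangle_\delta$, easiness would force it to equal its easy hull $\langle\text{top summands},\singleton\rangle_\delta$, which is all of $\Partlin_\delta$ only in the top row and is a proper category otherwise; comparing $\Pc_{(\delta)}$-images then isolates $\Partlin_\delta$ as the unique easy entry. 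The cases resting on the crossing and on the half-liberated generator are the delicate ones and are pinned down by the two lemmas immediately preceding this proposition (following \cite{GW18}), which determine precisely which of the inclusions $\langle\Pc_{(\delta)}\crosspart,\cdot\rangle_\delta\subseteq\langle\crosspart,\cdot\rangle_\delta$ and $\langle\Pc_{(\delta)}\halflibpart,\cdot\rangle_\delta\subseteq\langle\halflibpart,\cdot\rangle_\delta$ are strict.

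The hard part will be exactly this last point: for the rightmost column and the half-liberated row the functor $\Pc_{(\delta)}$ does not separate the category from an easy one, since e.g.\ $\langle\Pc_{(\delta)}\crosspart,\singleton\rangle_\delta$ and $\langle\crosspart,\singleton\rangle_\delta$ share the reduced image $\langle\Pc_{(\delta)}\crosspart\rangle\dred$. Deciding easiness there cannot be done by $\Pc_{(\delta)}$-invariance alone and requires the direct containment analysis of the two cited lemmas, namely determining whether the honest partition $\crosspart$ (resp.\ $\halflibpart$) itself lies in the category, equivalently whether the inclusion into its easy hull is strict. Once those strict inclusions are available, Proposition~\ref{P.summands} converts each of them into the asserted non-easiness, and the full diagram follows by assembling the pieces above.
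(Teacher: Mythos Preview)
Your approach is exactly the paper's, whose proof reads in full ``Follows directly from all the results above.'' You have correctly identified which result does which job: Lemma~\ref{L.redex} supplies the four mutually distinct reduced categories, Proposition~\ref{P.strictP} gives the horizontal strict inclusions, Remark~\ref{R.redP} separates the rows (and the categories of Proposition~\ref{P.Pcnoneasy}) via their $\Pc_{(\delta)}$-images, and the two lemmas immediately preceding the proposition handle the comparison with the easy hulls for the $\crosspart$- and $\halflibpart$-rows.

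One slip: your sentence ``those partitions generate the singleton'' is true only for $\Laaa$ in the top row. The partitions $\crosspart$, $\Laaaa$ and $\halflibpart$ all have even length, and the category operations preserve parity, so none of them can ever produce $\singleton$. Hence the collapse-onto-the-right-entry argument, as you wrote it, works only for row~1. For the remaining rows the logic has to be rerouted: first show the rightmost entry is non-easy (if it were easy it would coincide with its easy hull, and the strict vertical inclusion into the row above---or, for the bottom two rows, the strict inclusions furnished by the two cited lemmas---yields the contradiction), and then deduce non-easiness of the left and middle entries from the observation that enlarging an easy category by the partition $\singleton$ keeps it easy. Your final paragraph already points at exactly these lemmas as the decisive input, so the fix is a matter of reorganising the deduction rather than finding a new idea.
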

\begin{proof}
Follows directly from all the results above.
\end{proof}

\subsection{Category coisometry}
\label{secc.V}

In this section, we assume $\delta>0$, $\delta\neq1$.

\begin{defn}[{\cite[Def.~4.10, Rem.~4.16]{GW18}}]
\label{D.upsilon}
We define
$$\upsilon_{(\delta-1,\pm)}:=\idpart-{1\over\delta-1}\left(1\pm{1\over\sqrt{\delta}}\right)\in\Partlin_{\delta-1}(1,1).$$
For every $p\in\Part(k,l)$, we define $\Bc_{(\delta)}p\in\Partlin_{\delta-1}(k,l)$ the linear combination that was made from~$p$ by replacing every block of $k$ points by $\langle\text{block}\rangle+(-1)^k\langle\text{singletons}\rangle$. We extend this definition linearly to define a~map $\Bc_{(\delta)}\colon\Partlin_\delta\to\Partlin_{\delta-1}$. Finally, for every $p\in\Partlin_\delta(k,l)$, we define $\Vc_{(\delta,\pm)} p:=\upsilon_{(\delta-1,\pm)}^{\otimes l}(\Bc_{(\delta)}p)\upsilon_{(\delta-1,\pm)}^{\otimes k}$.
\end{defn}

\begin{ex}
As an example, let us mention how $\Vc_{(\delta,\pm)}$ acts on the smallest block partitions.
\begingroup
\begin{align*}
\Vc_{(\delta,\pm)}\singleton&=0,\\
\Vc_{(\delta,\pm)}\pairpart&=\pairpart,\displaybreak[0]\\
\Vc_{(\delta,\pm)}\Laaa&=\Laaa-{1\over \delta-1}\left(1\pm{1\over\sqrt{\delta}}\right)(\Laab+\Laba+\Labb)\\&\hskip16em+{1\over (\delta-1)^2}\left(2\pm{\delta+1\over\sqrt{\delta}}\right)\Labc,\displaybreak[0]\\
\Vc_{(\delta,\pm)}\Laaaa&=\Laaaa-{1\over \delta-1}\left(1\pm{1\over\sqrt{\delta}}\right)(\Laaab+\Laaba+\Labaa+\Labbb)+\\&\qquad+{1\over (\delta-1)^2}\left({\delta+1\over\delta}\pm{2\over\sqrt{\delta}}\right)(\Laabc+\Labac+\Labca+\\&\qquad+\Labbc+\Labcb+\Labcc)+{1\over(\delta-1)^3}\left({\delta^2-6\delta-5\over\delta}\mp{8\over\sqrt{\delta}}\right)\Labcd.
\end{align*}
\endgroup
\end{ex}

\begin{rem}
\label{R.Vc}\leavevmode
\begin{enumerate}
\renewcommand{\theenumi}{\alph{enumi}}
\item We have $\Bc_{(\delta)}\singleton=0$. Consequently, $\Bc_{(\delta)}p=0=\Vc_{(\delta,\pm)}p$ for every $p\in\Part$ containing a~singleton. 
\item As a~consequence of the preceding point and Remark~\ref{R.Pc}(b), we have $\Vc_{(\delta,\pm)}=\Vc_{(\delta,\pm)}\circ\Pc_{(\delta)}$.
\item The mapping $\Vc_{(\delta,\pm)}\colon\Partlin_\delta\to\Partlin_{\delta-1}$ is not a~functor.
\item For any $p\in\Partlin_\delta$, we have that $\Vc_{(\delta,\pm)}p=p+q$, where $q$ is a~linear combination of partitions containing at least one singleton.
\item Consequently, $\Vc_{(\delta,\pm)}$ acts injectively on partitions with no singletons. For the same reason, it also acts injectively on $\PartRed_\delta=\Pc_{(\delta)}\Partlin_\delta$.
\item $\Vc_{(\delta,\pm)}$ acts {\em blockwise} on partitions $p\in\Part$. That is, we may map all the blocks constituting a~given partition $p\in\Part$ an then ``assemble'' the image of~$p$ from the images of the blocks. More formally, we could say that $\Vc_{(\delta)}$ commutes with tensor products and arbitrary permutations of points. It follows from the fact that $\Bc_{(\delta)}$ acts blockwise by definition and conjugating by a~given partition is also a~blockwise operation.
\item We have $\upsilon_{(\delta-1,\pm)}\singleton=\mp{1\over\sqrt{\delta}}\singleton$. So, if $b_k\in\Part(k)$ is a~partition consisting of a~single block, we have
$$\Vc_{(\delta,\pm)}b_k=\upsilon^{\otimes k}(b_k+(-1)^k\singleton^{\otimes k})=\upsilon^{\otimes k}b_k+\left({\pm1\over\sqrt\delta}\right)^k\singleton^{\otimes k}.$$
\end{enumerate}
\end{rem}

\begin{prop}
\label{P.Vfunct}
The mapping $\Vc_{(\delta,\pm)}$ acts on $\PartRed_\delta$ as a~faithful monoidal unitary functor.
\end{prop}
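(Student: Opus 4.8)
The plan is to check the functor axioms one at a time, isolating preservation of composition as the single genuinely hard point and dispatching everything else with the structural facts already recorded in Remark~\ref{R.Vc}. Recall from Definition~\ref{D.upsilon} that the codomain is $\Partlin_{\delta-1}$, so the claim is that $\Vc_{(\delta,\pm)}$ restricts to a faithful monoidal $*$\hbox{-}functor $\PartRed_\delta\to\Partlin_{\delta-1}$.

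First I would dispose of the cheap axioms. Monoidality is immediate from Remark~\ref{R.Vc}(f): since $\Bc_{(\delta)}$ acts blockwise and the conjugation by tensor powers of $\upsilon_{(\delta-1,\pm)}$ is performed leg by leg, $\Vc_{(\delta,\pm)}$ commutes with tensor products and with conjugation by permutation partitions. Preservation of the $*$\hbox{-}structure (unitarity) reduces to $\upsilon_{(\delta-1,\pm)}^*=\upsilon_{(\delta-1,\pm)}$, which holds because $\idpart$ and $\disconnecterpart$ are self-adjoint and the scalar ${1\over\delta-1}\bigl(1\pm{1\over\sqrt\delta}\bigr)$ is real for $\delta>0$, $\delta\neq1$; together with the fact that $\Bc_{(\delta)}$ visibly commutes with reflection this gives $\Vc_{(\delta,\pm)}(p^*)=(\Vc_{(\delta,\pm)}p)^*$. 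Preservation of the unit morphism is the short computation $\Vc_{(\delta,\pm)}(\pi_{(\delta)})=\idpart$: indeed $\Bc_{(\delta)}\pi_{(\delta)}=\idpart+\disconnecterpart$ since $\Bc_{(\delta)}\disconnecterpart=0$ by Remark~\ref{R.Vc}(a), and conjugating $\idpart+\disconnecterpart$ by $\upsilon_{(\delta-1,\pm)}$ inside $\Partlin_{\delta-1}$, where a closed loop contributes $\delta-1$, makes the coefficient of $\disconnecterpart$ collapse to zero. Tensoring yields $\Vc_{(\delta,\pm)}(\pi_{(\delta)}^{\otimes k})=\idpart^{\otimes k}$, the honest identity on $k$ points, confirming that the image lands in $\Partlin_{\delta-1}$ with its non-reduced structure. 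Finally, faithfulness is exactly Remark~\ref{R.Vc}(e).

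What remains is preservation of composition on reduced morphisms, and here I would use the fact that the category operations are generated by the word operations, i.e.\ that every composition can be realized using tensor products, rotations, reflections and contractions. Tensor products and reflections are already handled, and a rotation of a one-line partition is a cyclic permutation of its points, hence also covered by Remark~\ref{R.Vc}(f). Thus the whole proposition collapses to one identity: for every reduced $p\in\PartRed_\delta$ with lower points only,
$$\Vc_{(\delta,\pm)}\bigl(\Pi_{(\delta)}\,p\bigr)=\Pi_{(\delta-1)}\bigl(\Vc_{(\delta,\pm)}\,p\bigr),$$
where $\Pi_{(\delta)}$ and $\Pi_{(\delta-1)}$ denote the contraction of the first two points with a closing loop weighted by $\delta$, respectively $\delta-1$. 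This is the crux and the only place where the parameter genuinely changes: it is precisely the reducedness of $p$ (a $\pi_{(\delta)}$ sitting on every leg, so that the effective cap is the reduced cap $\Lrot\pi_{(\delta)}$ and not $\pairpart$) that converts a $\delta$\hbox{-}weighted loop at the source into a $(\delta-1)$\hbox{-}weighted loop at the target. Some such hypothesis is unavoidable, since by Remark~\ref{R.Vc}(c) the map $\Vc_{(\delta,\pm)}$ is not a functor on all of $\Partlin_\delta$ (it already annihilates singletons).

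I see two routes to the contraction identity, and would present the first as the main argument. Route A (analytic continuation): for $\delta=N\in\N$ larger than the number of points involved, realize $T_{\pi_{(N)}}$ as the orthogonal projection onto the orthocomplement $\eta^\perp\cong\C^{N-1}$ of the fixed vector $\eta={1\over\sqrt N}\sum_i e_i$ (so that $T_{\disconnecterpart}=N\,|\eta\rangle\langle\eta|$ and $T_{\pi_{(N)}}=\id-|\eta\rangle\langle\eta|$), check that the restriction of $T_{\Pc_{(\delta)}p}$ to the appropriate tensor power of $\eta^\perp$ is carried by the coisometry encoded in $\upsilon_{(\delta-1,\pm)}$ onto $T_{\Vc_{(\delta,\pm)}p}$ at parameter $N-1$ (the two choices of identifying unitary producing the two signs $\pm$), and then observe that both sides of the desired identity have coordinates that are rational in $t:=\sqrt\delta$. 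Since the identity then holds for the infinitely many values $t=\sqrt N$, it holds in $\C(t)$ and hence for all admissible $\delta$. Route B (direct bookkeeping): expand $\Bc_{(\delta)}p$ into signed blocks, perform the contraction, and verify block by block that the signs $(-1)^{\text{block size}}$ together with the scalar in $\upsilon_{(\delta-1,\pm)}$ conspire to replace the $\delta$ coming from a closing loop by $\delta-1$. The main obstacle is exactly this identity; in Route B the decisive point is that the cancellation $\bigl(1\pm{1\over\sqrt\delta}\bigr)\bigl(-1\pm{1\over\sqrt\delta}\bigr)={1\over\delta}-1$ that already forced $\Vc_{(\delta,\pm)}\pi_{(\delta)}=\idpart$ above must be shown to recur in the presence of an arbitrary surrounding partition.
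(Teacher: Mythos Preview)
Your overall architecture matches the paper's proof exactly: reduce to the one-line word operations, observe that tensor product, rotation and reflection are immediate from the blockwise action (Remark~\ref{R.Vc}(f)), cite Remark~\ref{R.Vc}(e) for faithfulness, and isolate the contraction identity as the only genuine content. Your check that $\Vc_{(\delta,\pm)}\pi_{(\delta)}=\idpart$ is a nice addition the paper leaves implicit.

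Where you diverge is in how you finish. Your preferred Route~A (realize everything via the fibre functor $p\mapsto T_p$ for integer $\delta=N$, then extend by rationality in $\sqrt\delta$) is valid and is essentially the argument of \cite[Prop.~5.16]{GW18}, which the paper itself cites in Remark~\ref{R.Viso} as an alternative. The paper instead carries out your Route~B directly, but with a sharper reduction than you sketch: since $\Vc_{(\delta,\pm)}$ acts blockwise, the contraction identity only needs to be checked on a single block $b_k$ (contracting two legs of the same block) and on $b_k\otimes b_l$ (contracting across two different blocks). The computational engine is the single identity
\[
\uppairpart\,(\upsilon_{(\delta-1,\pm)}\otimes\upsilon_{(\delta-1,\pm)})=\uppairpart-\tfrac{1}{\delta}\,\upsingleton\otimes\upsingleton
\]
in $\Partlin_{\delta-1}$, i.e.\ the $\upsilon$-conjugated cap equals the reduced cap $\Lrot\pi_{(\delta)}$; this is the precise form of the cancellation you allude to, and once you have it the two block cases are a few lines each. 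So Route~A buys you a conceptual shortcut at the cost of importing the $T_p$ machinery and the faithfulness-for-large-$N$ fact, while the paper's execution of Route~B stays purely combinatorial and turns the ``arbitrary surrounding partition'' worry you flag into a finite check on blocks.
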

\begin{proof}
The injectivity of $\Vc_{(\delta,\pm)}$ was mentioned in Remark~\ref{R.Vc}(e). It remains to prove the functorial property. We will work with partitions with lower points only. So, we need to prove that $\Vc_{(\delta,\pm)}$ commutes with tensor product, contractions, rotations and reflections. The only non-trivial part are the contractions, the other operations follow from the fact that $\Vc_{(\delta,\pm)}$ acts blockwise as mentioned in Remark~\ref{R.Vc}(f).

Since $\Vc_{(\delta,\pm)}$ acts blockwise, it is enough to prove it for blocks. Denote by $b_k\in\Part(k)$ the partition consisting of a~single block. Then we have to prove that
$$\Pi_1\Vc_{(\delta,\pm)}\Pc_{(\delta)}b_k=\Vc_{(\delta,\pm)}\Pi_1\Pc_{(\delta)}b_k\quad\hbox{and}\quad\Pi_{k-1}\Vc_{(\delta,\pm)}\Pc_{(\delta)}(b_k\otimes b_l)=\Vc_{(\delta,\pm)}\Pi_{k-1}\Pc_{(\delta)}(b_k\otimes b_l).$$
To do that, first note that $\uppairpart(\upsilon_{(\delta-1,\pm)}\otimes\upsilon_{(\delta-1,\pm)})=\uppairpart-{1\over\delta}\upsingleton\otimes\upsingleton$. So, assuming $k>2$, we have\footnote{Pay attention to the fact that the computations take place mostly in $\Partlin_{\delta-1}$, not~$\Partlin_\delta$!}
\begin{align*}
\Pi_1\Vc_{(\delta,\pm)}b_k&=\left(\left(\uppairpart-{1\over\delta}\upsingleton\otimes\upsingleton\right)\otimes\upsilon^{\otimes(k-2)}\right)(b_k+(-1)^k\singleton^{\otimes k})\\
&=\left(1-{1\over\delta}\right)\upsilon^{\otimes(k-2)}(b_{k-2}+(-1)^k\singleton^{\otimes(k-2)})=\Vc_{(\delta,\pm)}\Pi_1\Pc_{(\delta)}b_k.
\end{align*}
For $k=2$, this equality holds as well since $\Pi_1\Vc_{(\delta,\pm)}\pairpart=\Pi_1\pairpart=\delta=\Vc_{(\delta,\pm)}\Pi_1\Pc_{(\delta)}\pairpart$. Now, assuming $k,l>1$, we have
\begin{align*}
&\Pi_{k-1}\Vc_{(\delta,\pm)}(b_k\otimes b_l)\\
&=\left(\upsilon^{\otimes(k-1)}\otimes\left(\uppairpart-{1\over\delta}\upsingleton\otimes\upsingleton\right)\otimes\upsilon^{\otimes(l-1)}\right)\\&\kern0.2\hsize(b_k\otimes b_l+(-1)^k\singleton^{\otimes k}\otimes b_l+(-1)^lb_k\otimes\singleton^{\otimes l}+(-1)^{k+l}\singleton^{\otimes(k+l)})\\
&=\upsilon^{\otimes(k+l-2)}\left(b_{k+l-2}-{1\over\delta}b_{k-1}\otimes b_{l-1}+{(-1)^k\over\delta}\singleton^{\otimes (k-1)}\otimes b_{l-1}+\right.\\&\kern0.2\hsize\left.+{(-1)^l\over\delta}b_{k-1}\otimes\singleton^{\otimes (l-1)}+\left(1-{1\over\delta}\right)(-1)^{k+l}\singleton^{\otimes (k+l-2)}\right)\\
&=\Vc_{(\delta,\pm)}\left(b_{k+l-2}-{1\over\delta}(b_{k-1}\otimes b_{l-1})\right)=\Vc_{(\delta,\pm)}\Pi_{k-1}\Pc_{(\delta)}(b_k\otimes b_l).
\end{align*}
For $k=1$ or $l=1$ both sides are obviously equal to zero.
\end{proof}

\begin{rem}
\label{R.Viso}
Consequently, $\Vc_{(\delta,\pm)}$~defines an isomorphism between any reduced category of partitions $\Kat\subset\PartRed_\delta$ and its image $\Vc_{(\delta,\pm)}\Kat\subset\Partlin_{\delta-1}$. So, also for any ordinary linear category of partitions $\Kat\subset\Partlin_\delta$, we have an isomorphism between $\Pc_{(\delta)}\Kat$ and $\Vc_{(\delta,\pm)}\Kat$.

This statement was already proven by another means (using the functor $p\mapsto T_p$) in \cite[Prop.~5.16]{GW18}.
\end{rem}

\begin{prop}
\label{P.Vcnoneasy}
The categories
$$\langle\Vc_{(\delta,\pm)}\Laaa\rangle_{\delta-1},\qquad\langle\Vc_{(\delta,\pm)}\Laaaa\rangle_{\delta-1}$$
are both non-easy.
\end{prop}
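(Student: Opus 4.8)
The plan is to transport the question to the reduced setting and exploit that $\Vc_{(\delta,\pm)}$ is a faithful functor on $\PartRed_\delta$. First I would use Remark~\ref{R.Vc}(b) to write $\Vc_{(\delta,\pm)}\Laaa=\Vc_{(\delta,\pm)}\Pc_{(\delta)}\Laaa$ and $\Vc_{(\delta,\pm)}\Laaaa=\Vc_{(\delta,\pm)}\Pc_{(\delta)}\Laaaa$, so that both generators are images of elements of $\PartRed_\delta$. Since $\Vc_{(\delta,\pm)}$ is a monoidal unitary functor on $\PartRed_\delta$ (Proposition~\ref{P.Vfunct}) which preserves the number of points, it sends $\pi_{(\delta)}\mapsto\idpart$ and $\Lrot\pi_{(\delta)}=\Pc_{(\delta)}\pairpart\mapsto\Vc_{(\delta,\pm)}\pairpart=\pairpart$ (using Example~\ref{ex.P}), hence it intertwines the reduced and the ordinary category operations and carries a singly generated reduced category onto the ordinary category generated by the image of the generator. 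Concretely, this is the isomorphism of Remark~\ref{R.Viso} applied to the reduced subcategories generated by $\Pc_{(\delta)}\Laaa$ and $\Pc_{(\delta)}\Laaaa$, giving
$$\langle\Vc_{(\delta,\pm)}\Laaa\rangle_{\delta-1}=\Vc_{(\delta,\pm)}\langle\Pc_{(\delta)}\Laaa\rangle\dred,\qquad \langle\Vc_{(\delta,\pm)}\Laaaa\rangle_{\delta-1}=\Vc_{(\delta,\pm)}\langle\Pc_{(\delta)}\Laaaa\rangle\dred.$$

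Next I would read off the low-length morphism spaces of these images. The key observation is that, by Definition~\ref{D.Pc} and Remark~\ref{R.Pc}(b), every reduced category $\Kat\subseteq\PartRed_\delta$ satisfies $\Kat(0,1)\subseteq\PartRed_\delta(0,1)=\Pc_{(\delta)}\spanlin\{\singleton\}=\{0\}$ and $\Kat(0,2)\subseteq\PartRed_\delta(0,2)=\Pc_{(\delta)}\spanlin\{\pairpart,\singleton\otimes\singleton\}=\spanlin\{\Lrot\pi_{(\delta)}\}$, because $\Pc_{(\delta)}\singleton=0$. As $\Vc_{(\delta,\pm)}$ is linear and length preserving, the image $\langle\Vc_{(\delta,\pm)}\Laaa\rangle_{\delta-1}$ contains no nonzero element of length one, while the length-two space of $\langle\Vc_{(\delta,\pm)}\Laaaa\rangle_{\delta-1}$ equals $\Vc_{(\delta,\pm)}\spanlin\{\Lrot\pi_{(\delta)}\}=\spanlin\{\pairpart\}$, so in particular $\singleton\otimes\singleton$ is not contained in it. I would stress that this step needs neither Lemma~\ref{L.Smin} nor Lemma~\ref{L.SSmin}: the ambient spaces $\PartRed_\delta(0,1)$ and $\PartRed_\delta(0,2)$ are already one-dimensional (resp.\ trivial), so no exceptional restriction such as $\delta\neq3$ enters.

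For the length-three generator the conclusion is then immediate: $\langle\Vc_{(\delta,\pm)}\Laaa\rangle_{\delta-1}$ contains no singleton, hence is non-easy by Lemma~\ref{L.3crit} (applied with parameter $\delta-1$). For the length-four generator I would invoke Proposition~\ref{P.summands}: if $\langle\Vc_{(\delta,\pm)}\Laaaa\rangle_{\delta-1}$ were easy it would contain every partition occurring with nonzero coefficient in $\Vc_{(\delta,\pm)}\Laaaa$. The explicit expansion of $\Vc_{(\delta,\pm)}\Laaaa$ computed above shows that $\Laaab$ appears with coefficient $-\frac{1}{\delta-1}\bigl(1\pm\frac{1}{\sqrt\delta}\bigr)$, which is nonzero for every $\delta>0$, $\delta\neq1$. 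Then the category would contain $\Laaab$ and therefore its contraction $\Pi\Laaab=\singleton\otimes\singleton$, contradicting the previous paragraph; hence it is non-easy.

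The only genuinely delicate point is the identification in the first paragraph, that applying $\Vc_{(\delta,\pm)}$ to the reduced category generated by $\Pc_{(\delta)}\Laaa$ (resp.\ $\Pc_{(\delta)}\Laaaa$) yields exactly the ordinary category generated by the image of the generator; this rests on $\Vc_{(\delta,\pm)}$ being a faithful monoidal unitary functor sending the reduced identity and duality morphisms $\pi_{(\delta)}$ and $\Lrot\pi_{(\delta)}$ to $\idpart$ and $\pairpart$. I would emphasize that easiness is \emph{not} preserved across this isomorphism (cf.\ Remark~\ref{R.noneasyiso}); this is precisely why non-easiness must be extracted directly from the singleton and pair--singleton morphism spaces rather than inherited from the reduced side.
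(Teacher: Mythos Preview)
Your proof is correct and follows the paper's overall strategy: identify $\langle\Vc_{(\delta,\pm)}\Laaa\rangle_{\delta-1}$ and $\langle\Vc_{(\delta,\pm)}\Laaaa\rangle_{\delta-1}$ with the $\Vc_{(\delta,\pm)}$-images of the reduced categories $\langle\Pc_{(\delta)}\Laaa\rangle\dred$ and $\langle\Pc_{(\delta)}\Laaaa\rangle\dred$ via Proposition~\ref{P.Vfunct}, then use Proposition~\ref{P.summands} to force a contradiction from assumed easiness. The difference lies only in how the contradiction is extracted. The paper argues that easiness would make the category equal to $\NCPart_{\delta-1}$ (resp.\ $\NCPart'_{\delta-1}$) and then compares dimensions at the generator's length, using that the reduced category has strictly smaller $(0,3)$- (resp.\ $(0,4)$-)space. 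You instead bound the length-one and length-two spaces directly from $\PartRed_\delta(0,1)=0$ and $\PartRed_\delta(0,2)=\spanlin\{\Lrot\pi_{(\delta)}\}$, concluding that $\singleton$ (resp.\ $\singleton\otimes\singleton$) is absent; this is a slightly more elementary endgame, since it needs nothing about the particular reduced categories beyond their inclusion in $\PartRed_\delta$, and it sidesteps any implicit dimension count at higher length. Both routes rest on the same functorial identification and on Proposition~\ref{P.summands}, so the core idea is shared.
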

\begin{proof}
From Proposition~\ref{P.Vfunct}, it follows that the above mentioned categories are isomorphic to~$\langle\Pc_{(\delta)}\Laaa\rangle\dred$ and $\langle\Pc_{(\delta)}\Laaaa\rangle\dred$, respectively. If they were easy, they would contain all the summands of $\Vc_{(\delta,\pm)}\Laaa$, resp.\ $\Vc_{(\delta,\pm)}\Laaaa$ (Prop.~\ref{P.summands}) and hence be equal to $\NCPart_{\delta-1}$, resp.\ $\NCPart'_{\delta-1}$,. This cannot happen since
\begin{align*}
\dim\langle\Vc_{(\delta,\pm)}\Laaa\rangle_{\delta-1}(0,3)&=\dim\langle\Pc_{(\delta)}\Laaa\rangle\dred(0,3)\\&<\dim\NCPart_\delta(0,3)=\dim\NCPart_{\delta-1}(0,3),\\
\dim\langle\Vc_{(\delta,\pm)}\Laaaa\rangle_{\delta-1}(0,4)&=\dim\langle\Pc_{(\delta)}\Laaaa\rangle\dred(0,4)\\&<\dim\NCPart'_\delta(0,4)=\dim\NCPart'_{\delta-1}(0,4).\qedhere
\end{align*}
\end{proof}

\section{Anticommutative twists}
\label{sec.twists}

In this section, we interpret the category isomorphisms $\Dc$ and~$\Jc$ described in Sections \ref{secc.disjoin}, \ref{secc.join}. As a~consequence, we are going to interpret the non-easy categories discovered as Candidates~\ref{Cand3}
$$\left\langle\crosspart-{2\over N}(\Pabac+\Pabcb)+{4\over N^2}\Pabcd\right\rangle_N\quad\hbox{and}\quad\langle 2\connecterpart-\crosspart\rangle_N.$$

As we already showed, these categories are both isomorphic to the category of all pairings $\Pair_N$. The idea is that instead of studying the image of these categories under the standard functor $p\mapsto T_p$, we study some alternative functors $p\mapsto\tilde T_p=T_{\Dc p}$, resp.\ $p\mapsto\tilde T_p=T_{\Jc p}$ acting on pair partitions $p\in\Pair_N$. Changing this functor also changes the interpretation of the partitions in terms of relations. In particular, the crossing partition~$\crosspart$, which generates the whole category $\Pair_N$, will no longer imply commutativity. We get some deformed commutativity instead. More concretely, some minus signs will appear, so the commutativity will partially change to anticommutativity.

To keep this section short, we will not recall the basics of the theory of compact quantum groups and Hopf algebras. We refer the reader to the monographs \cite{Tim08,NT13}.

\subsection{2-cocycle deformations}

We recall a construction from \cite{Doi93,Sch96,BY14}.

Let $A$ be a Hopf $*$-algebra. We use the Sweedler notation $\Delta(x)=x_{(1)}\otimes x_{(2)}$. A \emph{unitary 2-cocycle} on $A$ is a convolution invertible linear map $\sigma\colon A\otimes A\to\C$ satisfying
$$\sigma(x_{(1)},y_{(1)})\sigma(x_{(2)}y_{(2)},z)=\sigma(y_{(1)},z_{(1)})\sigma(x,y_{(2)}z_{(2)}),$$
$$\sigma^{-1}(x,y)=\overline{\sigma(S(x)^*,S(y)^*)},$$
and $\sigma(x,1)=\sigma(1,x)=\epsilon(x)$ for $x,y,z\in A$, where $\sigma^{-1}$ denotes the convolution inverse of $\sigma$.

Let $G$ be a compact quantum group and $\sigma$ a 2-cocycle on the associated Hopf $*$-algebra $\Pol G$. Then we can define its deformation $G^\sigma$, where $\Pol G^\sigma$ coincides with $\Pol G$ as a coalgebra and the $*$-algebra structure is defined as follows
\begin{align}
\hat x\hat y&=\sigma(x_{(1)},y_{(1)})\sigma^{-1}(x_{(3)},y_{(3)})\widehat{x_{(2)}y_{(2)}}, \label{eq.defmult}\\
\hat x^*&=\sigma(S(x_{(5)})^*,x_{(4)}^*)\sigma^{-1}(x_{(2)}^*,S(x_{(1)})^*)\widehat{x_{(3)}^*} \label{eq.definv},
\end{align}
where $\hat x$ denotes $x\in\Pol G$ viewed as an element of $\Pol G^\sigma$.

It holds that the quantum groups $G$ and $G^\sigma$ have monoidally equivalent representation categories.

Consider compact quantum groups $H\subset G$, so there is a surjection $q\colon\Pol G\to\Pol H$. A 2-cocycle $\sigma$ on $H$ then induces a 2-cocycle $\sigma_q:=\sigma\circ(q\otimes q)$ on $G$. We often construct 2-cocycles on quantum groups induced by bicharacters on dual discrete quantum subgroups $\hat\Gamma\subset G$.

Let $\Gamma$ be a group. A \emph{unitary bicharacter} on $\Gamma$ is a map $\phi\colon\Gamma\times\Gamma\to\T$ (here $\T$ denotes the complex unit circle) satisfying
$$\phi(xy,z)=\phi(x,z)\phi(y,z),\qquad \phi(x,yz)=\phi(x,y)\phi(x,z).$$
In particular, we have $\phi(x,e)=\phi(e,x)=1$. It is easy to see that any unitary bicharacter $\phi$ on a discrete group $\Gamma$ extends to a unitary 2-cocycle on $\C\Gamma=\Pol\hat\Gamma$. 

\subsection{Anticommutative twists}

We now make a~special choice for~$\sigma$. Consider any $\sigma\in M_N(\{\pm 1\})$. One can easily check that the map $(t_i,t_j)\mapsto\sigma_{ij}$, where $t_1,\dots,t_N$ are generators of~$\Z_2^N$, uniquely extends to a~bicharacter on~$\Z_2^N$. This induces a~2\hbox{-}cocycle on any quantum group~$G$ containing~$\hat\Z_2^N$ as a~quantum subgroup.

So, suppose $G$ is a~compact matrix quantum group with fundamental representation $u\in M_N(\Pol G)$ and $q\colon \Pol G\to \C\Z_2^N$ maps $u_{ij}\mapsto t_i\delta_{ij}$. Let us, for simplicity, restrict to the case $G\subset O_N^+$.

For a~multi-index $\mathbf{i}=(i_1,\dots,i_k)$, denote $\sigma_\mathbf{i}:=\prod_{1\le m<n\le k}\sigma_{i_mi_n}$.

\begin{lem}
\label{L.sigprod}
Suppose, $\bar u=u$, i.e.\ $u_{ij}^*=u_{ij}$. Then
\begin{align}
\hat u_{ij}^*&=\sigma_{ii}\sigma_{jj}\hat u_{ij},\label{eq.oginv}\\
\hat u_{i_1j_1}\cdots\hat u_{i_kj_k}&=\sigma_\mathbf{i}\sigma_\mathbf{j}\,\widehat{(u_{i_1j_1}\cdots u_{i_kj_k})}.\label{eq.ogmult}
\end{align}
\end{lem}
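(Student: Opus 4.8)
The plan is to compute both identities by substituting the definitions \eqref{eq.defmult} and \eqref{eq.definv} and exploiting the very special form of the map $q$. The key preliminary observation is that $q$ collapses every iterated coproduct to its diagonal. Recall that on the fundamental corepresentation one has $\Delta(u_{ij})=\sum_k u_{ik}\otimes u_{kj}$, $\epsilon(u_{ij})=\delta_{ij}$, and, since $G\subset O_N^+$ with $\bar u=u$, the antipode acts by $S(u_{ij})=u_{ji}$. First I would record the value of the induced cocycle on matrix entries: from $q(u_{ij})=t_i\delta_{ij}$ and $\sigma(t_i,t_j)=\sigma_{ij}$, linearity gives
$$\sigma_q(u_{ij},u_{kl})=\delta_{ij}\delta_{kl}\,\sigma_{ik},\qquad \sigma_q^{-1}(u_{ij},u_{kl})=\delta_{ij}\delta_{kl}\,\sigma_{ik},$$
where the second equality uses $\sigma_{ik}\in\{\pm1\}$, so $\sigma_{ik}^{-1}=\sigma_{ik}$. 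The effect is that whenever $\sigma_q$ or $\sigma_q^{-1}$ is applied to Sweedler legs built from products of $u$'s, only the terms in which all matrix indices lie on the diagonal survive, and the bicharacter property converts the surviving product into $\sigma$ evaluated on a single product of generators.

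For \eqref{eq.ogmult} I would argue by induction on $k$, the case $k=1$ being trivial. For the step, set $w:=u_{i_1j_1}\cdots u_{i_{k-1}j_{k-1}}$ and use the induction hypothesis $\hat u_{i_1j_1}\cdots\hat u_{i_{k-1}j_{k-1}}=\sigma_{\mathbf{i}'}\sigma_{\mathbf{j}'}\hat w$ with $\mathbf{i}'=(i_1,\dots,i_{k-1})$; it then remains to evaluate $\hat w\,\hat u_{i_kj_k}$ through \eqref{eq.defmult}. Because $\Delta$ is an algebra map, $\Delta^{(2)}(w)$ is a sum of triples of products of $u$'s, and applying $q$ to the first and third legs pins every internal index to the diagonal. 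The surviving first leg evaluates under $q$ to $t_{i_1}\cdots t_{i_{k-1}}$ paired with $t_{i_k}$, the third leg to $t_{j_1}\cdots t_{j_{k-1}}$ paired with $t_{j_k}$; the bicharacter property then produces the factors $\prod_{m<k}\sigma_{i_mi_k}$ and $\prod_{m<k}\sigma_{j_mj_k}$, while the middle leg reassembles $\widehat{w\,u_{i_kj_k}}$. Multiplying by $\sigma_{\mathbf{i}'}\sigma_{\mathbf{j}'}$ and using $\sigma_{\mathbf{i}'}\prod_{m<k}\sigma_{i_mi_k}=\sigma_{\mathbf{i}}$ (and likewise for $\mathbf{j}$) yields the claim.

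For \eqref{eq.oginv} I would insert $u_{ij}$ into \eqref{eq.definv} via the five-fold coproduct $\Delta^{(4)}(u_{ij})=\sum u_{ia}\otimes u_{ab}\otimes u_{bc}\otimes u_{cd}\otimes u_{dj}$, interpreting $\sigma,\sigma^{-1}$ there as the induced $\sigma_q,\sigma_q^{-1}$. Using $S(u_{pq})=u_{qp}$ together with $u_{pq}^*=u_{pq}$ on the relevant legs turns the two cocycle arguments into $\sigma_q(u_{jd},u_{cd})$ and $\sigma_q^{-1}(u_{ab},u_{ai})$; by the diagonal-collapse observation these are nonzero only for $c=d=j$ and $a=b=i$, with values $\sigma_{jj}$ and $\sigma_{ii}$, while the remaining leg $\widehat{u_{bc}^{\,*}}$ becomes $\hat u_{ij}$, giving $\hat u_{ij}^*=\sigma_{ii}\sigma_{jj}\hat u_{ij}$. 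The main obstacle is the bookkeeping in this last step: one must correctly interleave the antipode, the adjoints and the five Sweedler legs and keep track of exactly which indices $q$ forces onto the diagonal before invoking the bicharacter identity — an easy place to drop or misplace a sign factor — whereas, once the diagonal-collapse principle is in hand, the multiplicative identity \eqref{eq.ogmult} is comparatively routine.
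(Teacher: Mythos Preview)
Your proposal is correct and follows exactly the approach indicated in the paper: plug the coproduct of $u_{ij}$ into the defining formulae \eqref{eq.defmult} and \eqref{eq.definv}, use that $q$ forces all Sweedler legs onto the diagonal so the bicharacter values $\sigma_{ij}$ appear, and run an induction on $k$ for \eqref{eq.ogmult}. The paper's own proof is just a one-line pointer to this computation, and your write-up spells out precisely those details.
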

\begin{proof}
Both formulae are obtained simply by using the defining formulae \eqref{eq.defmult}, \eqref{eq.definv}. For the second one, we need to apply induction on~$k$.
\end{proof}

\begin{prop}
Suppose $G\subset O_N^+$. Then $G^\sigma\subset O^+(F)\subset U^+(F)=U^+_N$ with $F_{ij}=\delta_{ij}\sigma_{ii}$.
\end{prop}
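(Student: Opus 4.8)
The plan is to invoke the universal property of $O^+(F)$: to establish the inclusion $G^\sigma\subset O^+(F)$ it suffices to exhibit a surjective Hopf $*$-algebra morphism $\Pol O^+(F)\to\Pol G^\sigma$ sending fundamental representation to fundamental representation, and for this I only need to check that the matrix $\hat u=(\hat u_{ij})$ of generators of $\Pol G^\sigma$ satisfies the two defining relations of $O^+(F)$, namely that $\hat u$ is unitary and $F$-orthogonal in the sense $\bar{\hat u}=F^{-1}\hat u F$. Here I use that $G^\sigma$ coincides with $G$ as a coalgebra, so that $\hat u$ is automatically a corepresentation ($\Delta(\hat u_{ij})=\sum_k\hat u_{ik}\otimes\hat u_{kj}$, the comultiplication being undeformed) whose entries generate $\Pol G^\sigma$ as a $*$-algebra; granting the relations, the morphism and its surjectivity follow.

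The $F$-orthogonality is immediate. Since $F_{ij}=\delta_{ij}\sigma_{ii}$ is a diagonal sign matrix with $F=F^{-1}$, the relation $\bar{\hat u}=F^{-1}\hat u F$ reads entrywise as $\hat u_{ij}^*=\sigma_{ii}\sigma_{jj}\hat u_{ij}$, which is exactly \eqref{eq.oginv} of Lemma~\ref{L.sigprod}. For the unitarity I would compute $(\hat u\hat u^*)_{ij}$ directly: rewriting the adjoint by \eqref{eq.oginv} and then reducing the product of two deformed generators by \eqref{eq.ogmult} gives
$$\sum_k\hat u_{ik}\hat u_{jk}^*=\sum_k\sigma_{jj}\sigma_{kk}\,\hat u_{ik}\hat u_{jk}=\sum_k\sigma_{jj}\sigma_{kk}\,\sigma_{ij}\sigma_{kk}\,\widehat{u_{ik}u_{jk}}=\sigma_{ij}\sigma_{jj}\,\widehat{\textstyle\sum_k u_{ik}u_{jk}},$$
where the factors $\sigma_{kk}^2=1$ cancel. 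As $G\subset O_N^+$ the matrix $u$ is orthogonal, so $\sum_k u_{ik}u_{jk}=\delta_{ij}$ and hence the right-hand side equals $\sigma_{ij}\sigma_{jj}\delta_{ij}$, which is $1$ for $i=j$ (as $\sigma_{jj}^2=1$) and $0$ otherwise; thus $(\hat u\hat u^*)_{ij}=\delta_{ij}$. The same computation with $(u^tu)_{ij}=\delta_{ij}$ yields $(\hat u^*\hat u)_{ij}=\delta_{ij}$, so $\hat u$ is unitary.

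It remains to note that the outer inclusions are formal. The relation $\bar{\hat u}=F^{-1}\hat u F$ verified above is precisely the extra relation cutting $O^+(F)$ out of $U^+(F)$ as a quotient, giving $O^+(F)\subset U^+(F)$; and since the sign matrix $F$ is unitary, the second generating relation of $U^+(F)$ (that $F\bar u F^{-1}$ be unitary) is equivalent to unitarity of $u$ alone, so $U^+(F)=U_N^+$. I do not expect a genuine obstacle here: the only point demanding care is the bookkeeping of the signs $\sigma_\mathbf{i},\sigma_\mathbf{j}$ in the unitarity computation, where one must confirm that all the $\pm1$ factors produced by Lemma~\ref{L.sigprod} combine into the single prefactor $\sigma_{ij}\sigma_{jj}$ that collapses against $\delta_{ij}$.
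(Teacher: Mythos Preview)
Your proof is correct and follows essentially the same approach as the paper's: verify the $F$-orthogonality $\bar{\hat u}=F^{-1}\hat uF$ and the unitarity of $\hat u$ directly from Lemma~\ref{L.sigprod}, then use that $F$ is a unitary sign matrix to conclude $U^+(F)=U_N^+$. One small slip in the last paragraph: the second defining relation of $U^+(F)$ becomes, for unitary $F$, equivalent to unitarity of $\bar u$ (not of $u$ alone), but this does not affect the conclusion.
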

\begin{proof}
All the relations are checked using Lemma~\ref{L.sigprod}. The relation $\bar{\hat u}=F^{-1}\hat uF$ is just a~matrix version of Eq.~\eqref{eq.oginv}. Checking the unitarity of $\hat u$ is also straightforward. As an example, let us check the relation $\hat u\hat u^*=1_N$:
$$\sum_k \hat u_{ik}\hat u_{jk}^*=\sum_k\sigma_{jj}\sigma_{kk}\hat u_{ik}\hat u_{jk}=\sum_k \sigma_{ij}\sigma_{jj}\widehat{u_{ik}u_{jk}}=\delta_{ij}.$$
Finally, the fact that $U^+(F)=U_N^+$ follows from $F^*F=1_N$. Indeed, $F\bar{\hat u}F^{-1}$ being unitary can be written as $F\bar{\hat u}F^{-1}(F^*)^{-1}\hat u^tF^*=1_N$ and $(F^*)^{-1}\hat u^tF^*F\bar{\hat u}F^{-1}=1_N$. These relations are obviously equivalent to $\bar{\hat u}\hat u^t=1_N=\hat u^t\bar{\hat u}$.
\end{proof}

Now we analyse the intertwiner spaces for the twisted quantum group $G^\sigma$. This will also prove the equivalence of the representation categories for our special choice of the 2-cocycle.

\begin{prop}
\label{P.sigmor}
Consider $G=(C(G),u)\subset O_N^+$. Then
$$\Mor(\hat u^{\otimes k},\hat u^{\otimes l})=\{T^\sigma\mid T\in\Mor(u^{\otimes k},u^{\otimes l})\}$$
with $T^\sigma_\mathbf{ij}=T_\mathbf{ij}\sigma_\mathbf{i}\sigma_\mathbf{j}$.
\end{prop}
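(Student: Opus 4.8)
The plan is to show that, for a fixed linear map $T\colon(\C^N)^{\otimes k}\to(\C^N)^{\otimes l}$ with entries $T_{\mathbf{i}\mathbf{j}}$ (where $\mathbf{i}=(i_1,\dots,i_l)$ indexes the rows and $\mathbf{j}=(j_1,\dots,j_k)$ the columns), the intertwiner relation for $T^\sigma$ with respect to $\hat u$ is \emph{equivalent} to the intertwiner relation for $T$ with respect to $u$. Both inclusions of the asserted equality then follow simultaneously, since $T\mapsto T^\sigma$ is an involutive linear bijection on the space of $N^l\times N^k$ matrices: indeed $(T^\sigma)^\sigma=T$ because each $\sigma_\mathbf{i},\sigma_\mathbf{j}\in\{\pm1\}$ squares to $1$. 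In particular no separate dimension count is needed for the reverse inclusion.

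First I would write the intertwiner condition in coordinates. Recall that $T\in\Mor(u^{\otimes k},u^{\otimes l})$ means
\[\sum_{\mathbf{m}}T_{\mathbf{i}\mathbf{m}}\,u_{m_1j_1}\cdots u_{m_kj_k}=\sum_{\mathbf{n}}u_{i_1n_1}\cdots u_{i_ln_l}\,T_{\mathbf{n}\mathbf{j}}\]
for all multi-indices $\mathbf{i}$ (length $l$) and $\mathbf{j}$ (length $k$), where $\mathbf{m}$ runs over length-$k$ and $\mathbf{n}$ over length-$l$ multi-indices. The analogous relation characterizing $T^\sigma\in\Mor(\hat u^{\otimes k},\hat u^{\otimes l})$ is obtained by putting hats on all the $u$'s and replacing $T$ by $T^\sigma$.

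The heart of the computation is to rewrite each monomial $\hat u_{m_1j_1}\cdots\hat u_{m_kj_k}$ and $\hat u_{i_1n_1}\cdots\hat u_{i_ln_l}$ using \eqref{eq.ogmult} of Lemma~\ref{L.sigprod} (this is exactly where the hypothesis $G\subset O_N^+$, i.e.\ $\bar u=u$, is used). Substituting this, inserting $T^\sigma_{\mathbf{i}\mathbf{m}}=T_{\mathbf{i}\mathbf{m}}\sigma_\mathbf{i}\sigma_\mathbf{m}$, and using $\sigma_\mathbf{m}^2=\sigma_\mathbf{n}^2=1$, the left-hand side of the $\hat u$-relation collapses to $\sigma_\mathbf{i}\sigma_\mathbf{j}\,\widehat{\bigl(\sum_{\mathbf{m}}T_{\mathbf{i}\mathbf{m}}u_{m_1j_1}\cdots u_{m_kj_k}\bigr)}$ and the right-hand side to $\sigma_\mathbf{i}\sigma_\mathbf{j}\,\widehat{\bigl(\sum_{\mathbf{n}}u_{i_1n_1}\cdots u_{i_ln_l}T_{\mathbf{n}\mathbf{j}}\bigr)}$, pulling the hat out by linearity. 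The decisive point is that the factors $\sigma_\mathbf{m}$ and $\sigma_\mathbf{n}$ produced by \eqref{eq.ogmult} pair up with the $\sigma_\mathbf{m},\sigma_\mathbf{n}$ built into $T^\sigma$ and cancel, leaving the \emph{same} nonzero scalar $\sigma_\mathbf{i}\sigma_\mathbf{j}$ multiplying both sides.

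Finally I would divide both sides by $\sigma_\mathbf{i}\sigma_\mathbf{j}\in\{\pm1\}$ and use that $\widehat{\,\cdot\,}\colon\Pol G\to\Pol G^\sigma$ is a linear isomorphism of the underlying vector space (it is the identity on the common coalgebra), hence injective. This shows the $\hat u$-relation for $T^\sigma$ holds if and only if the $u$-relation for $T$ holds, for every fixed pair $(\mathbf{i},\mathbf{j})$. Therefore $T^\sigma\in\Mor(\hat u^{\otimes k},\hat u^{\otimes l})$ exactly when $T\in\Mor(u^{\otimes k},u^{\otimes l})$, and by the involutivity of $T\mapsto T^\sigma$ this yields the claimed equality of morphism spaces. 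I do not anticipate a genuine obstacle: the substance is entirely in Lemma~\ref{L.sigprod}, and the only thing requiring care is the bookkeeping of the sign factors $\sigma_\mathbf{m},\sigma_\mathbf{n}$ and the observation that the equivalence is two-sided.
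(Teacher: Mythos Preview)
Your proof is correct and follows essentially the same route as the paper: write the intertwiner relation in coordinates, invoke Lemma~\ref{L.sigprod} (Eq.~\eqref{eq.ogmult}) to convert between $\hat u$-monomials and $\widehat{u\text{-monomials}}$, and use $\sigma_\mathbf{i},\sigma_\mathbf{j},\sigma_\mathbf{m},\sigma_\mathbf{n}\in\{\pm1\}$ to cancel the sign factors. Your explicit remark that $T\mapsto T^\sigma$ is an involution (hence the equivalence gives both inclusions at once) is a nice clarification that the paper leaves implicit.
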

\begin{proof}
If $T\in\Mor(u^{\otimes k},u^{\otimes l})$, it means that $Tu^{\otimes k}=u^{\otimes l}T$, which is certainly equivalent to $T\widehat{u^{\otimes k}}=\widehat{u^{\otimes l}}T$. We can rewrite this in matrix entries as
$$\sum_\mathbf{m}T_\mathbf{im}\,\widehat{(u_{m_1j_1}\cdots u_{m_kj_k})}=\sum_\mathbf{n}\widehat{(u_{i_1n_1}\cdots u_{i_ln_l})}\,T_\mathbf{nj}.$$
Now, applying Lemma~\ref{L.sigprod}, we can rewrite this as
$$\sum_\mathbf{m}{T_\mathbf{im}\over \sigma_\mathbf{m}\sigma_\mathbf{j}}\hat u_{m_1j_1}\cdots \hat u_{m_kj_k}=\sum_\mathbf{n}{T_\mathbf{nj}\over\sigma_\mathbf{i}\sigma_\mathbf{n}}\hat u_{i_1n_1}\cdots \hat u_{i_ln_l}.$$
Finally, using the fact that $\sigma_\mathbf{i},\sigma_\mathbf{j}=\pm 1$, we can see that this is equivalent to $T^{\sigma}\hat u^{\otimes k}=\hat u^{\otimes l}T^\sigma$.
\end{proof}

In connection with partition categories, we can interpret this result as follows. Consider $G:=H_N$ the hyperoctahedral group, which corresponds to the category $\EvenPart_N:=\langle\crosspart,\Paaaa\rangle_N$ spanned by partitions with blocks of even length. It is the smallest partition quantum group having $\hat\Z_2^N$ as a~quantum subgroup. The matrix~$\sigma$ then defines an alternative functor $T^\sigma\colon\EvenPart_N\to\Mat$ mapping $p\mapsto T_p^\sigma$ with $[T_p^\sigma]_\mathbf{ij}=[T_p]_\mathbf{ij}\sigma_\mathbf{i}\sigma_\mathbf{j}=\delta_p(\mathbf{j},\mathbf{i})\sigma_\mathbf{i}\sigma_\mathbf{j}$.

\begin{lem}
The map $T^\sigma\colon\EvenPart_N\to\Mat$ is indeed a~monoidal unitary functor.
\end{lem}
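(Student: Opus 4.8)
The plan is to deduce the functoriality of $T^\sigma$ from that of the undeformed functor $T\colon\EvenPart_N\to\Mat$, $p\mapsto T_p$, which is a monoidal unitary functor by the classical realization of partitions as intertwiners \cite{BS09}. The device is a family of sign matrices: for each $k$ let $D_k\in\Mat$ be the diagonal operator on $(\C^N)^{\otimes k}$ with diagonal entries $\sigma_\mathbf{i}$. Since $\sigma_\mathbf{i}=\pm1$, each $D_k$ is a self-adjoint unitary with $D_k^2=\id$, and comparing entries with the formula $[T^\sigma_p]_\mathbf{ij}=\delta_p(\mathbf{j},\mathbf{i})\sigma_\mathbf{i}\sigma_\mathbf{j}$ gives at once $T^\sigma_p=D_l\,T_p\,D_k$ for every $p\in\EvenPart_N(k,l)$, where $D_k$ acts on the domain $(\C^N)^{\otimes k}$ and $D_l$ on the codomain.

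From this presentation three of the four functor axioms follow immediately. Preservation of identities is clear, since $T^\sigma_{\idpart^{\otimes k}}=D_kT_{\idpart^{\otimes k}}D_k=D_k^2=\id$. For composition, if $p\in\EvenPart_N(k,l)$ and $q\in\EvenPart_N(l,m)$, the middle factor collapses, $T^\sigma_qT^\sigma_p=D_m T_q\,D_l^2\,T_p D_k=D_mT_qT_pD_k$, so $T^\sigma$ inherits the composition rule of $T$, loop factors (the scalar $\delta=N$ per closed loop) included. For the adjoint, the reality and self-adjointness of $D_k$ give $(T^\sigma_p)^*=D_kT_p^*D_l=D_kT_{p^*}D_l=T^\sigma_{p^*}$, so $T^\sigma$ is a $*$-functor.

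The remaining and genuinely nontrivial point is compatibility with the tensor product, and I expect this to be the crux: the sign-matrix trick alone is \emph{not} enough, because the cocycle sign is not multiplicative under concatenation of multi-indices, i.e. $D_{k+k'}\neq D_k\otimes D_{k'}$ in general. The discrepancy is the cross term $C_{\mathbf{ii}'}:=\prod_{m=1}^{k}\prod_{n=1}^{k'}\sigma_{i_m i'_n}$, and writing out the entries reduces the identity $T^\sigma_{p\otimes q}=T^\sigma_p\otimes T^\sigma_q$ to showing $C_{\mathbf{ii}'}\,C_{\mathbf{jj}'}=1$ whenever $\delta_p(\mathbf{j},\mathbf{i})=\delta_q(\mathbf{j}',\mathbf{i}')=1$. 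This is exactly where the even-block hypothesis must enter. Collecting exponents by value, $C_{\mathbf{ii}'}C_{\mathbf{jj}'}=\prod_{v,w}\sigma_{vw}^{\,\alpha_v\beta_w+\alpha'_v\beta'_w}$, where $\alpha_v,\alpha'_v$ count the upper and lower legs of $p$ carrying the value $v$, and $\beta_w,\beta'_w$ do the same for $q$. Since $\delta_p(\mathbf{j},\mathbf{i})=1$ forces the labelling to be constant on blocks and every block of $p\in\EvenPart_N$ has even size, the number $\alpha_v+\alpha'_v$ of $p$-legs with value $v$ is even, so $\alpha_v\equiv\alpha'_v\pmod2$, and likewise $\beta_w\equiv\beta'_w\pmod2$. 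Hence every exponent satisfies $\alpha_v\beta_w+\alpha'_v\beta'_w\equiv2\alpha_v\beta_w\equiv0\pmod2$, each factor is $+1$, and the cross terms cancel. This establishes tensor compatibility and completes the verification that $T^\sigma$ is a monoidal unitary functor.
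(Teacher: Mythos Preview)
Your proof is correct and follows essentially the same approach as the paper: both reduce composition and involution to the cancellation $\sigma_\mathbf{b}^2=1$, and both handle the tensor product by the same parity argument (the total number of $p$-legs carrying a fixed value is even because blocks have even size, whence the cross-term exponents vanish mod~$2$). Your packaging via the diagonal sign matrices $D_k$ with $T^\sigma_p=D_lT_pD_k$ is a tidy way to make the composition and involution steps immediate, but the substance of the argument is the same as the paper's.
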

\begin{proof}
Checking that $T^\sigma$ behaves well with respect to composition and involution is straightforward using the fact that $p\mapsto T_p$ is a~monoidal unitary functor. Let us do it for the composition.
\begin{align*}
[T_q^\sigma T_p^\sigma]_\mathbf{ac}&=\sum_\mathbf{b}[T_q^\sigma]_\mathbf{ab}[T_p^\sigma]_\mathbf{bc}=\sum_\mathbf{b}[T_q^\sigma]_\mathbf{ab}[T_p^\sigma]_\mathbf{bc}\sigma_\mathbf{a}\sigma_\mathbf{b}\sigma_\mathbf{b}\sigma_\mathbf{c}\\
&=\sigma_\mathbf{a}\sigma_\mathbf{c}\sum_\mathbf{b}[T_q^\sigma]_\mathbf{ab}[T_p^\sigma]_\mathbf{bc}=[T_{qp}]_\mathbf{ac}\sigma_\mathbf{a}\sigma_\mathbf{c}=[T_{qp}^\sigma]_\mathbf{ac}
\end{align*}
The tensor product is a~bit more complicated. We need to check that
$$\sigma_\mathbf{ac}\sigma_\mathbf{bd}\delta_{p\otimes q}(\mathbf{ac},\mathbf{bd})=\sigma_\mathbf{a}\sigma_\mathbf{b}\sigma_\mathbf{c}\sigma_\mathbf{d}\delta_p(\mathbf{a},\mathbf{b})\delta_q(\mathbf{c},\mathbf{d})$$
for any two partitions $p\in\Part(k,l)$, $q\in\Part(m,n)$ with blocks of even length. We know that $p\mapsto T_p$ is a~monoidal functor, so $\delta_{p\otimes q}(\mathbf{ac},\mathbf{bd})=\delta_p(\mathbf{a},\mathbf{b})\delta_q(\mathbf{c},\mathbf{d})$. Take any $\mathbf{a}$,~$\mathbf{b}$, $\mathbf{c}$,~$\mathbf{d}$ such that $\delta_{p\otimes q}(\mathbf{ac},\mathbf{bd})=1$. We need to show that $\sigma_\mathbf{ac}\sigma_\mathbf{bd}=\sigma_\mathbf{a}\sigma_\mathbf{b}\sigma_\mathbf{c}\sigma_\mathbf{d}$. Equivalently, we need to show that
$$\prod_{i=1}^k\prod_{j=1}^l\prod_{s=1}^m\prod_{t=1}^n\sigma_{a_ic_s}\sigma_{b_jd_t}=1.$$
Recall that we assume that all blocks of $p$ and~$q$ have even size. Consequently, one can check that, for every block~$V$ of~$p$ and every block~$W$ of~$q$, there is an even amount of terms $\sigma_{a_ic_s}$ or $\sigma_{b_jd_t}$ of the product with $i\in V$ and $s\in W$ resp. $j\in V$ and $t\in W$. Since we assume $\delta_{p\otimes q}(\mathbf{ac},\mathbf{bd})=1$, the multiindices $\mathbf{ab}$ and~$\mathbf{cd}$ are constant on the blocks. As a~consequence, the product of those terms always equals one.
\end{proof}

\begin{cor}
\label{C.twistqg}
Let $G$ be a~quantum group group with $H_N\subset G\subset O_N^+$ corresponding to some linear category of partitions~$\Kat$. Then the representation category of~$G^\sigma$ is described by the same partition category~$\Kat$ if one uses the functor~$T^\sigma$ instead of~$T$. That is,
$$\Mor(\hat u^{\otimes k},\hat u^{\otimes l})=\{T^\sigma_p\mid p\in\Kat(k,l)\}.$$
\end{cor}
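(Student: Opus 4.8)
The plan is to derive the statement as a direct combination of Proposition~\ref{P.sigmor} with the Tannaka--Krein description of $\Kat$; essentially no new computation is needed, and the work lies in matching the hypotheses.

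First I would translate the chain $H_N\subset G\subset O_N^+$ into partition categories. The quantum group/partition category correspondence is inclusion-reversing, with $H_N$ corresponding to $\EvenPart_N$ and $O_N^+$ to $\NCPair_N$. Hence the category $\Kat$ associated with $G$ satisfies $\NCPair_N\subset\Kat\subset\EvenPart_N$, and Woronowicz's Tannaka--Krein duality gives
$$\Mor(u^{\otimes k},u^{\otimes l})=\{T_p\mid p\in\Kat(k,l)\}\qquad\text{for all }k,l\in\N_0.$$
The inclusion $\Kat\subset\EvenPart_N$, which comes precisely from the hypothesis $H_N\subset G$, is exactly what makes the functor $T^\sigma$ of the preceding lemma defined on all of $\Kat$.

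Next I would apply Proposition~\ref{P.sigmor}, which is available since $G\subset O_N^+$, to this description. It yields
$$\Mor(\hat u^{\otimes k},\hat u^{\otimes l})=\{T^\sigma\mid T\in\Mor(u^{\otimes k},u^{\otimes l})\}=\{(T_p)^\sigma\mid p\in\Kat(k,l)\}.$$
It then remains to identify the twist $T\mapsto T^\sigma$ of Proposition~\ref{P.sigmor} with the functor $p\mapsto T^\sigma_p$, which is immediate from the defining formulae: for $T=T_p$ one has $[(T_p)^\sigma]_\mathbf{ij}=[T_p]_\mathbf{ij}\sigma_\mathbf{i}\sigma_\mathbf{j}=[T^\sigma_p]_\mathbf{ij}$, so $(T_p)^\sigma=T^\sigma_p$. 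As both $T\mapsto T^\sigma$ and $p\mapsto T^\sigma_p$ are linear and $\Kat(k,l)$ is a linear subspace, the right-hand set equals $\{T^\sigma_p\mid p\in\Kat(k,l)\}$, the asserted equality.

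The point requiring the most care---and the only thing beyond routine substitution---is the interpretive claim that $\Kat$ \emph{describes} the representation category of $G^\sigma$, rather than merely parametrising its morphism spaces set-theoretically. Here I would invoke the preceding lemma, which shows that $T^\sigma$ is a monoidal unitary functor on $\EvenPart_N\supset\Kat$. This functoriality guarantees that $\{T^\sigma_p\mid p\in\Kat\}$ is closed under composition, tensor product and involution, so the collection is genuinely the image of $\Kat$ under an honest fibre functor, and the identification is categorical rather than merely dimensional. I anticipate no further obstacle, since the bicharacter structure of $\sigma$ has already been fully exploited in Lemma~\ref{L.sigprod} and Proposition~\ref{P.sigmor}.
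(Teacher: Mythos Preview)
Your proposal is correct and follows essentially the same approach as the paper, which simply states that the result follows directly from Proposition~\ref{P.sigmor} and the definition of~$T^\sigma_p$. You have spelled out in detail the two ingredients implicit in that one-line proof---the Tannaka--Krein description $\Mor(u^{\otimes k},u^{\otimes l})=\{T_p\mid p\in\Kat(k,l)\}$ and the identification $(T_p)^\sigma=T^\sigma_p$---and your additional remark invoking the preceding lemma to justify the categorical (rather than merely set-theoretic) nature of the description is a welcome elaboration.
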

\begin{proof}
Follows directly from Proposition~\ref{P.sigmor} and the definition of~$T^\sigma_p$.
\end{proof}

\begin{prop}
For any $p\in\EvenPart_N\cap\NCPart_N$, we have $T_p^\sigma=T_p$. In particular, twisting by~$\sigma$ leads to a~new quantum group only for categories with crossings.
\end{prop}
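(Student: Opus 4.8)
The plan is to reduce the statement to partitions with lower points only and then to a direct sign computation that separates the roles of \emph{even} and of \emph{non-crossing}. By the preceding lemma both $p\mapsto T_p$ and $p\mapsto T_p^\sigma$ are monoidal unitary functors on $\EvenPart_N$, and $T_{\pairpart}^\sigma=T_{\pairpart}$: the single even block of size two contributes only the diagonal factor $\sigma_{ii}$, which is $1$ for the (anticommutative) cocycle normalised so that $G^\sigma\subset O_N^+$. Hence the two functors agree on the identity and duality morphisms, so $T_p^\sigma=T_p$ holds for a partition $p$ precisely when it holds for every rotation of $p$. I may therefore assume $p\in\NCPart_N\cap\EvenPart_N$ has lower points only, $p\in\Part(0,n)$.

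For such $p$ the index $\mathbf j$ is empty and $\mathbf i=(m_1,\dots,m_n)$, so whenever $\delta_p(\mathbf i)=1$ (i.e.\ $\mathbf m$ is constant on the blocks of $p$, with value $c_V$ on a block $V$) the prefactor is the full product $\sigma_{\mathbf i}=\prod_{1\le a<b\le n}\sigma_{m_am_b}$. I would split this product according to the blocks the two positions lie in. The \emph{within-block} contribution of a block $V$ is $\sigma_{c_Vc_V}^{\binom{|V|}{2}}$, a power of the diagonal entry $\sigma_{c_Vc_V}=1$, hence trivial. It remains to show that each \emph{cross-block} contribution, coming from an unordered pair of distinct blocks $V\neq W$ with values $c,c'$, equals $1$.

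Fix such $V,W$ and write the cross-block contribution as $\sigma_{cc'}^{P}\sigma_{c'c}^{Q}$, where $P$ (resp.\ $Q$) counts the position pairs $a<b$ with $a\in V,b\in W$ (resp.\ $a\in W,b\in V$); note $P+Q=|V|\,|W|$. Writing $\sigma_{c'c}=\epsilon\,\sigma_{cc'}$ with $\epsilon=\sigma_{cc'}\sigma_{c'c}\in\{\pm1\}$, this equals $\sigma_{cc'}^{\,P+Q}\epsilon^{Q}=\sigma_{cc'}^{\,|V||W|}\epsilon^{Q}$. Since both blocks have even size, $|V||W|$ is even and the first factor is $1$, so everything reduces to proving that $Q$ is even. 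This is exactly where non-crossingness enters: because $p$ is non-crossing, the legs of one of the two blocks, say $W$, all lie inside a single gap of $V$, with exactly $r$ legs of $V$ lying to the left of that gap (the case of disjoint intervals being $r=0$ or $r=|V|$). Then $Q=(|V|-r)\,|W|$, which is even because $|W|$ is even. Hence every cross-block contribution is $1$, so $\sigma_{\mathbf i}=1$ and $T_p^\sigma=T_p$.

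Finally, the ``in particular'' clause is immediate: if $\Kat\subset\EvenPart_N$ is moreover non-crossing, then $\Kat\subset\NCPart_N\cap\EvenPart_N$, so $T_p^\sigma=T_p$ for every $p\in\Kat$ and, by Corollary~\ref{C.twistqg}, $G^\sigma$ has the same intertwiner spaces as $G$; thus a genuinely new quantum group can arise only when $\Kat$ contains a crossing. I expect the only real subtlety to be the parity bookkeeping for $Q$, i.e.\ arguing carefully from the non-crossing condition that the legs of the inner block always fall into a single gap of the outer block (so that $Q$ is a multiple of the even number $|W|$) and checking the boundary and disjoint-interval cases; the evenness of the blocks is what finally forces the count to be even rather than merely controlled.
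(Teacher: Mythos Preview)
Your proof is correct, but it takes a different route from the paper's. The paper argues structurally: a non-crossing partition with even blocks is, up to rotation, of the form $q\otimes b$ with $b$ a single block; since both $T$ and $T^\sigma$ are monoidal functors, it suffices to check the statement on a single block $b_{2l}$, where $\delta_{b_{2l}}(\mathbf i)=1$ forces all indices equal and $\sigma_{\mathbf i}$ becomes a power of a diagonal entry. Your argument is instead a direct sign computation: you expand $\sigma_{\mathbf i}$ as a product over position pairs, separate within-block from cross-block contributions, and use the non-crossing condition only in the elementary form ``one block sits in a single gap of the other'' to show that each cross-block exponent $Q$ is a multiple of an even block size, hence even.

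What each buys: the paper's approach is shorter and leans on the familiar recursive description of non-crossing partitions plus functoriality; your approach avoids that recursion entirely and makes the role of the two hypotheses completely transparent (evenness kills $\sigma_{cc'}^{|V||W|}$, non-crossingness kills $\epsilon^{Q}$). One small point of care in your write-up: when you say ``say $W$'' for the inner block, note that $P$ and $Q$ are not symmetric in $V,W$, so you are really using the freedom to \emph{name} the inner block $W$; in the other orientation the same computation gives $Q=|V|\cdot s$, even because $|V|$ is even. Your normalisation remark $\sigma_{ii}=1$ is also well placed; the paper's single-block computation tacitly needs the same thing.
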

\begin{proof}
It is enough to prove the statement for partitions. Then by linearity of $T$ and~$T^\sigma$, it must hold also for linear combinations.

So, let $p$ be a~non-crossing partition with blocks of even size. It is known that non-crossing partitions are always of the form of some nested blocks. That is, up to rotation, we have $p=q\otimes b$, where $b$ is a~partition consisting of a~single block. Since both $T$ and~$T^\sigma$ are monoidal functors, it is enough to check the statement for block partitions. So, let $b_{2l}\in\Part(0,2l)$ be a~partition with a~single block of $2l$ points. Then indeed
\[[T_{b_2l}^\sigma]_\mathbf{i}=\delta_\mathbf{i}\sigma_\mathbf{i}=\delta_\mathbf{i}\sigma_{i_1}^{2l}=\delta_\mathbf{i}=[T_p]_\mathbf{i}.\qedhere\]
\end{proof}

Crossing partitions correspond to some commutativity relations. The cocycle twist corresponding to the matrix~$\sigma$ then puts some extra signs to the relations, which may make them anticommutative. In particular, it may be interesting to study the relation corresponding to the simple crossing~$\crosspart$, which then reads
\begin{equation}
\label{eq.sigcom}
\sigma_{ik}\sigma_{jl}\,\hat u_{ij}\hat u_{kl}=\sigma_{ki}\sigma_{lj}\,\hat u_{kl}\hat u_{ij}.
\end{equation}

\subsection{Examples}

\begin{ex}
\label{ex.qdef}
If we choose
$$\sigma_{ij}=\begin{cases}-1&i<j\\+1&i\ge j,\end{cases}$$
we get $q$-commutativity for $q=-1$.

Indeed, substituting into Eq. \eqref{eq.sigcom}, we get exactly the defining relation for $O_N^{-1}$
$$u_{ij}u_{ik}=-u_{ik}u_{ij},\quad u_{ji}u_{ki}=-u_{ki}u_{ji}\qquad\text{for $i\neq j$},$$
$$u_{ij}u_{kl}=u_{kl}u_{ij}\qquad\text{for $i\neq k,j\neq l$}.$$

The fact that $O_N^{-1}$ is a cocycle twist of $O_N$ and hence possesses an equivalent representation category was discovered already in \cite[Theorem 4.3]{BBC07}.
\end{ex}

\begin{ex}
\label{ex.grad}
If we choose
$$\sigma_{ij}=\begin{cases}\sigma_i\sigma_j&i<j\\+1&i\ge j,\end{cases}$$
with
$$\sigma_i=\begin{cases}+1&i\le n\\-1&i>n,\end{cases}$$
for some fixed $n<N$, we get some kind of graded commutativity. The commutativity relation \eqref{eq.sigcom} becomes
$$u_{ij}u_{kl}=\sigma_i\sigma_j\sigma_k\sigma_l\,u_{kl}u_{ij}.$$
\end{ex}

\subsection{Constructing a partition category isomorphism}

In certain cases, it may happen that given a~compact matrix quantum group~$G$ such that $O_N^+\supset G\supset H_N$ corresponding to some partition category~$\Kat$, the deformation also satisfies $O_N^+\supset G^\sigma\supset H_N$ and hence is again described by a~linear category of partitions~$\tilde\Kat$ using the standard functor $p\mapsto T_p$ rather than $p\mapsto T_p^\sigma$.

This happens in the case of the $(-1)$-deformations. Indeed, taking $\sigma$ as in Example~\ref{ex.qdef} and $O_N^+\supset G\supset H_N$, we have
$$O_N^+=O_N^{+\sigma}\supset G^\sigma\supset H_N^\sigma=H_N.$$

It is easy to check the following
\begin{equation}
\label{eq.eqmap}
T_{\crosspart}^\sigma=-T_{\crosspart}+2T_{\Paaaa},\qquad T_{\Paaaa}^\sigma=T_{\Paaaa}.
\end{equation}
As a~consequence, we have that $O_N^{-1}$ is a~quantum group determined by the category of all pairings $\Pair_N=\langle\crosspart\rangle_N$ using the functor $p\mapsto T_p^\sigma$ or, equivalently, by the category $\langle\crosspart-2\Paaaa\rangle_N$ (which is isomorphic to $\Pair_N$ by Prop.~\ref{P.join}) using the standard functor $p\mapsto T_p$.

To put it in a different way: $O_N^{-1}$ is a twist of $O_N$. In order to describe its representation category using partitions, we have to either twist the functor $p\mapsto T_p$ or to twist the partition category itself.

\begin{rem}
In general, it is possible to show that there is an isomorphism of monoidal $*$\hbox{-}categories $\phi\colon\EvenPart_\delta\to\EvenPart_\delta$ mapping
$$\crosspart\mapsto -\crosspart+2\Paaaa,\qquad \Paaaa\mapsto\Paaaa.$$
Taking $\delta=N\in\N$, it holds that $T_p^\sigma=T_{\phi(p)}$ with~$\sigma$ as in Example~\ref{ex.qdef}.
\end{rem}

However, most easy categories are stable under this isomorphism. The interesting examples are the following ones.

\begin{prop}
\label{P.qcomex}
The following non-easy linear categories of partitions
$$\langle\crosspart-2\Paaaa\rangle_N,\qquad \langle\halflibpart-2\Pabaaba-2\Paabaab-2\Pabbabb+4\Paaaaaa\rangle_N$$
correspond to the quantum groups $O_N^{-1}=O_N^\sigma$ and $O_N^{*\,-1}=O_N^{*\,\sigma}$, respectively, where $\sigma$ comes from Example~\ref{ex.qdef}. That is, those are $(-1)$-deformations of the quantum groups $O_N$ and~$O_N^*$.
\end{prop}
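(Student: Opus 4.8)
The plan is to assemble the statement from the functorial framework above together with the explicit joining isomorphism $\Jc$. Recall that by Brauer duality $O_N$ is the partition quantum group attached to $\Pair_N=\langle\crosspart\rangle_N$, while $O_N^*$ is attached to $\langle\halflibpart\rangle_N$; in both cases the chain $H_N\subset G\subset O_N^+$ holds (since $H_N\subset O_N\subset O_N^*\subset O_N^+$), so Corollary~\ref{C.twistqg} applies. For the sign cocycle $\sigma$ of Example~\ref{ex.qdef} we moreover have $O_N^{+\sigma}=O_N^+$ and $H_N^\sigma=H_N$, so that the deformations $O_N^\sigma=O_N^{-1}$ and $O_N^{*\sigma}=O_N^{*\,-1}$ again lie between $H_N$ and $O_N^+$ and are therefore themselves described by partition categories via the \emph{standard} functor $p\mapsto T_p$.

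First I would translate the twisted functor into a twist of the category. By Corollary~\ref{C.twistqg}, the representation category of $G^\sigma$ equals $\{T_p^\sigma\mid p\in\Kat\}$, where $\Kat$ is the category of $G$. Using the identity $T_p^\sigma=T_{\phi(p)}$ from the remark preceding this proposition, with $\phi\colon\EvenPart_N\to\EvenPart_N$ the monoidal $*$-isomorphism determined by $\phi(\crosspart)=-\crosspart+2\Paaaa$ and $\phi(\Paaaa)=\Paaaa$, this rewrites as $\{T_q\mid q\in\phi(\Kat)\}$. Hence, described now through the standard functor, the representation category of $G^\sigma$ is governed by the twisted category $\phi(\Kat)$.

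The key step is to identify $\phi(\Kat)$ with the two categories in the statement, and here I would use that $\phi$ coincides on pairings with the joining isomorphism $\Jc$ of Proposition~\ref{P.join}. Indeed, both $\phi$ and $\Jc$ are monoidal $*$-functors that fix blocks and send the single crossing $\crosspart$ to $-\crosspart+2\Paaaa$; since $\Pair_N=\langle\crosspart\rangle_N$ is generated by $\crosspart$, functoriality forces $\phi|_{\Pair_N}=\Jc$. Consequently $\phi(\Pair_N)=\langle\phi(\crosspart)\rangle_N=\langle\crosspart-2\Paaaa\rangle_N$, which is the first category, while $\phi(\langle\halflibpart\rangle_N)=\Jc(\langle\halflibpart\rangle_N)=\langle\halflibpart-2\Pabaaba-2\Paabaab-2\Pabbabb+4\Paaaaaa\rangle_N$ by Remark~\ref{R.halflibiso}, which is the second. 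Combining this with the previous paragraph yields the two asserted correspondences.

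The routine verifications — that $\sigma$ fixes $O_N^+$ and $H_N$, and that $\Jc\halflibpart$ collapses (after the coincidences among the joinings of the three mutual crossings of $\halflibpart$, and up to the overall sign absorbed by rescaling the generator) to the displayed five-term combination — are already recorded above. The main obstacle I anticipate is precisely the clean identification $\phi|_{\Pair_N}=\Jc$: one must ensure that the abstract twist functor $\phi$, defined by its action on the generators of $\EvenPart_N$, genuinely restricts on the subcategory of pairings to the combinatorially defined $\Jc$, rather than merely agreeing on $\crosspart$ by accident. This is guaranteed by functoriality once one checks that both functors agree on all structural morphisms, and it is the single place where the argument requires care.
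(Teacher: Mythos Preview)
Your argument is correct and follows the same route as the paper's one-line proof: both identify the two categories as the images of $\Pair_N=\langle\crosspart\rangle_N$ and $\langle\halflibpart\rangle_N$ under the isomorphism $\phi$ (equivalently $\Jc$ on pairings), and then invoke Corollary~\ref{C.twistqg} together with $T_p^\sigma=T_{\phi(p)}$. You have simply spelled out what the paper compresses into ``See Section~\ref{secc.join}'', including the observation that $\phi|_{\Pair_N}=\Jc$ by agreement on the generator and structural morphisms.
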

\begin{proof}
It follows from the fact that the categories are images of $\Pair_N=\langle\crosspart\rangle_N$, resp.\ $\langle\halflibpart\rangle_N$ by the above defined isomorphism~$\phi$. See Section~\ref{secc.join}.
\end{proof}

\begin{rem}
One could obtain many examples of non-easy two-coloured categories by reformulating these results to the unitary case and applying them to the half-liberated two-coloured categories recently obtained in \cite{MW18,MW19}.
\end{rem}

Now, consider $\sigma$ as in Example~\ref{ex.grad}. Here, we can see that
$$O_N^+\supset O_N^\sigma\supset O_n\times O_{N-n}.$$
In particular, choosing $n=N-1$, we have
$$O_N^+\supset O_N^\sigma\supset O_n\times\hat\Z_2\simeq B_N'.$$

\begin{prop}
\label{P.qgradedex}
The non-easy category
$$\Kat=\langle\crosspart-{2\over N}(\Pabac+\Pabcb)+{4\over N}\Pabcd)\rangle_N$$
corresponds to the quantum group $G=U_{(N,\pm)}^*O_N^\sigma U_{(N,\pm)}$, where $\sigma$ is defined as in Example~\ref{ex.grad} for $n=N-1$ and $U_{(N,\pm)}$ is a~unitary matrix defined in \cite[Definition~4.5]{GW18}.
\end{prop}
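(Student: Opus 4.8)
The plan is to realize the standard fibre functor on $\Kat$ as a conjugate of the twisted functor $T^\sigma$, and thereby identify the Tannaka--Krein dual of $\Kat$ with a conjugate of $O_N^\sigma$. Since $O_N$ corresponds to the category of all pairings $\Pair_N=\langle\crosspart\rangle_N$ and $H_N\subset O_N\subset O_N^+$, Corollary~\ref{C.twistqg} says that $O_N^\sigma$ is described by the \emph{same} category $\Pair_N$, but through the twisted functor $p\mapsto T^\sigma_p$; that is, $\Mor(\hat u^{\otimes k},\hat u^{\otimes l})=\{T^\sigma_p\mid p\in\Pair_N(k,l)\}$. The category $\Kat$ in the statement is (up to the evident misprint ${4\over N}$ versus ${4\over N^2}$) exactly $\langle\Dc\crosspart\rangle_N=\Dc\Pair_N$, where $\Dc$ is the disjoining isomorphism of Proposition~\ref{P.disjoin}. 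So it suffices to show that conjugating $T^\sigma$ by the unitary $U:=U_{(N,\pm)}$ turns it into the standard functor precomposed with $\Dc$.

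First I would set up the conjugation at the level of quantum groups. The fundamental corepresentation of $G=U^*O_N^\sigma U$ is $v:=U^*\hat uU$, and conjugation by a fixed unitary transforms intertwiners by $T\mapsto (U^*)^{\otimes l}T\,U^{\otimes k}$, so that
\[
\Mor(v^{\otimes k},v^{\otimes l})=\bigl\{(U^*)^{\otimes l}\,T^\sigma_p\,U^{\otimes k}\bigm| p\in\Pair_N(k,l)\bigr\}.
\]
Hence the proposition reduces to the single functorial identity
\[
(U^*)^{\otimes l}\,T^\sigma_p\,U^{\otimes k}=T_{\Dc p}\qquad\text{for all }p\in\Pair_N(k,l).
\]
Both sides are monoidal unitary functors of $p$ (the left side because $T^\sigma$ is one by the lemma preceding Corollary~\ref{C.twistqg} and conjugation by a unitary preserves this property; the right side because $\Dc$ is a monoidal $*$-functor by Proposition~\ref{P.disjoin} and $p\mapsto T_p$ is one). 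On the duality morphism $\pairpart$ the identity reads $(U^*)^{\otimes 2}T_\pairpart=T_\pairpart$, since $T^\sigma_\pairpart=T_\pairpart$ (a non-crossing even block is fixed by the twist) and $\Dc\pairpart=\pairpart$; this is precisely the statement that $U_{(N,\pm)}$ preserves the cap, equivalently $\Vc_{(\delta,\pm)}\pairpart=\pairpart$, which also guarantees $G\subset O_N^+$. Because $\crosspart$ together with $\pairpart$ generates $\Pair_N$, it is then enough to verify the identity on the single generator $\crosspart$.

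The core computation is therefore to check
\[
(U^*)^{\otimes 2}\,T^\sigma_{\crosspart}\,U^{\otimes 2}=T_{\Dc\crosspart},\qquad \Dc\crosspart=\crosspart-{2\over N}(\Pabac+\Pabcb)+{4\over N^2}\Pabcd,
\]
using the explicit form of $U_{(N,\pm)}$ from \cite[Definition~4.5]{GW18} and of $\sigma$ from Example~\ref{ex.grad} with $n=N-1$ (so that $\sigma$ flips only the last coordinate). This is the exact analogue of the $(-1)$-twist computation recorded in Equation~\eqref{eq.eqmap} and Proposition~\ref{P.qcomex}: there the \emph{uniform} sign choice produced $\Jc$ with no conjugation needed, whereas here the \emph{graded} sign choice produces $\Dc$ but only after conjugating by $U_{(N,\pm)}$, which is the same matrix already used to implement $\Vc_{(\delta,\pm)}$ (cf.\ Remark~\ref{R.Viso}). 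I expect this explicit matrix identity to be the main obstacle: one must expand $T^\sigma_{\crosspart}$ entrywise as $[T_{\crosspart}]_{\mathbf{ij}}\sigma_{\mathbf i}\sigma_{\mathbf j}$, conjugate by $U^{\otimes 2}$, and recognise the outcome as the operator attached to the three-term combination $\Dc\crosspart$; reconciling the single flipped-coordinate sign against the off-diagonal structure of $U_{(N,\pm)}$ is where the real work sits. Once this generator identity is established, monoidality upgrades it to all of $\Pair_N$, and the intertwiner description above exhibits $G=U_{(N,\pm)}^*O_N^\sigma U_{(N,\pm)}$ as the quantum group whose representation category is $\Kat$ via the standard functor $p\mapsto T_p$, completing the proof.
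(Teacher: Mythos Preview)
Your approach is essentially the same as the paper's: the paper's entire proof consists of the single sentence ``It is straightforward to check that $T_p=U_{(N,\pm)}^*T_{\crosspart}^\sigma U_{(N,\pm)}$ for $p=\crosspart-{2\over N}(\Pabac+\Pabcb)+{4\over N}\Pabcd$,'' which is exactly the generator identity you isolate as the core computation. Your proposal is more explicit than the paper in spelling out \emph{why} checking this one identity suffices (functoriality of both sides, generation of $\Pair_N$ by $\pairpart$ and $\crosspart$, and the cap being preserved by $U_{(N,\pm)}$), but neither you nor the paper actually carries out the matrix computation; both leave it as a direct verification.
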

\begin{proof}
It is straightforward to check that $T_p=U_{(N,\pm)}^*T_{\crosspart}^\sigma U_{(N,\pm)}$ for $p=\crosspart-{2\over N}(\Pabac+\Pabcb)+{4\over N}\Pabcd)$.
\end{proof}

Recall that we already showed in Proposition~\ref{P.disjoin} that this category is isomorphic to the category of pairings $\Pair_N=\langle\crosspart\rangle_N$.

\bibliographystyle{halpha}
\bibliography{mybase}

\end{document}